
%
%
%
%
%
%
%
%
\documentclass[10pt,twoside]{article}
\usepackage{amsmath,amsthm, amssymb, amsfonts, amscd}
\usepackage{verbatim}
\usepackage[all]{xy}
\usepackage{amsxtra}
\usepackage{url}
\usepackage{enumerate}
%
%
%
%
\def\YEAR{\year}\newcount\VOL\VOL=\YEAR\advance\VOL by-1995
\def\firstpage{1}\def\lastpage{1000}
\def\received{}\def\revised{}
\def\communicated{}

\makeatletter
\def\magnification{\afterassignment\m@g\count@}
\def\m@g{\mag=\count@\hsize6.5truein\vsize8.9truein\dimen\footins8truein}
\makeatother

\oddsidemargin1.91cm\evensidemargin1.91cm\voffset1.4cm

\textwidth12.0cm\textheight19.0cm

\font\eightrm=cmr8
\font\caps=cmcsc10                    
\font\Caps=cmcsc10 scaled \magstep1   

%


\pagestyle{myheadings}
\pagenumbering{arabic}
\setcounter{page}{\firstpage}

\makeatletter
\setlength\topmargin {14\p@}
\setlength\headsep   {15\p@}
\setlength\footskip  {25\p@}
\setlength\parindent {20\p@}
\@specialpagefalse\headheight=8.5pt
\def\DocMath{}
\renewcommand{\@evenhead}{%
    \ifnum\thepage>\lastpage\rlap{\thepage}\hfill%
    \else\rlap{\thepage}\slshape\leftmark\hfill{\caps\SAuthor}\hfill\fi}%
\renewcommand{\@oddhead}{%
    \ifnum\thepage=\firstpage{\DocMath\hfill\llap{\thepage}}%
    \else{\slshape\rightmark}\hfill{\caps\STitle}\hfill\llap{\thepage}\fi}%
\makeatother

\def\TSkip{\bigskip}
\newbox\TheTitle{\obeylines\gdef\GetTitle #1
\ShortTitle  #2
\SubTitle    #3
\Author      #4
\ShortAuthor #5
\EndTitle
{\setbox\TheTitle=\vbox{\baselineskip=20pt\let\par=\cr\obeylines%
\halign{\centerline{\Caps##}\cr\noalign{\medskip}\cr#1\cr}}%
    \copy\TheTitle\TSkip\TSkip%
\def\next{#2}\ifx\next\empty\gdef\STitle{#1}\else\gdef\STitle{#2}\fi%
\def\next{#3}\ifx\next\empty%
    \else\setbox\TheTitle=\vbox{\baselineskip=20pt\let\par=\cr\obeylines%
    \halign{\centerline{\caps##} #3\cr}}\copy\TheTitle\TSkip\TSkip\fi%
\centerline{\caps #4}\TSkip\TSkip%
\def\next{#5}\ifx\next\empty\gdef\SAuthor{#4}\else\gdef\SAuthor{#5}\fi%
\ifx\received\empty\relax
    \else\centerline{\eightrm Received: \received}\fi%
\ifx\revised\empty\TSkip%
    \else\centerline{\eightrm Revised: \revised}\TSkip\fi%
\ifx\communicated\empty\relax
    \else\centerline{\eightrm Communicated by \communicated}\fi\TSkip\TSkip%
\catcode'015=5}}\def\Title{\obeylines\GetTitle}
\def\Abstract{\begingroup\narrower
    \parskip=\medskipamount\parindent=0pt{\caps Abstract. }}
\def\EndAbstract{\par\endgroup\TSkip}

\long\def\MSC#1\EndMSC{\def\arg{#1}\ifx\arg\empty\relax\else
     {\par\narrower\noindent%
     2010 Mathematics Subject Classification: #1\par}\fi}

\long\def\KEY#1\EndKEY{\def\arg{#1}\ifx\arg\empty\relax\else
    {\par\narrower\noindent Keywords and Phrases: #1\par}\fi\TSkip}

\newbox\TheAdd\def\Addresses{\vfill\copy\TheAdd\vfill
    \ifodd\number\lastpage\vfill\eject\phantom{.}\vfill\eject\fi}
{\obeylines\gdef\GetAddress #1
\Address #2
\Address #3
\Address #4
\EndAddress
{\def\xs{4.3truecm}\parindent=0pt
\setbox0=\vtop{{\obeylines\hsize=\xs#1\par}}\def\next{#2}
\ifx\next\empty 
     \setbox\TheAdd=\hbox to\hsize{\hfill\copy0\hfill}
\else\setbox1=\vtop{{\obeylines\hsize=\xs#2\par}}\def\next{#3}
\ifx\next\empty 
     \setbox\TheAdd=\hbox to\hsize{\hfill\copy0\hfill\copy1\hfill}
\else\setbox2=\vtop{{\obeylines\hsize=\xs#3\par}}\def\next{#4}
\ifx\next\empty\ 
     \setbox\TheAdd=\vtop{\hbox to\hsize{\hfill\copy0\hfill\copy1\hfill}
                \vskip20pt\hbox to\hsize{\hfill\copy2\hfill}}
\else\setbox3=\vtop{{\obeylines\hsize=\xs#4\par}}
     \setbox\TheAdd=\vtop{\hbox to\hsize{\hfill\copy0\hfill\copy1\hfill}
            \vskip20pt\hbox to\hsize{\hfill\copy2\hfill\copy3\hfill}}
\fi\fi\fi\catcode'015=5}}\gdef\Address{\obeylines\GetAddress}

\hfuzz=0.1pt\tolerance=2000\emergencystretch=20pt\overfullrule=5pt

\DeclareFontEncoding{OT2}{}{} 
\DeclareTextFontCommand{\textcyr}{\fontencoding{OT2}
    \fontfamily{wncyr}\fontseries{m}\fontshape{n}\selectfont}
\newcommand{\Sha}{\textcyr{Sh}}

\begin{document}
\Title
Stably Cayley semisimple groups
\ShortTitle
\SubTitle
\Author
Mikhail Borovoi and Boris Kunyavski\u\i
\ShortAuthor
M. Borovoi and B. Kunyavski\u\i
\EndTitle
\Abstract
A linear algebraic group $G$  over a field $k$
is called a Cayley group if it admits a Cayley map,
i.e., a $G$-equivariant birational isomorphism over $k$
between the group variety $G$ and its Lie algebra ${\rm Lie}(G)$.
A prototypical example is the classical
``Cayley transform" for the special orthogonal group ${\bf SO}_n$  defined
by Arthur Cayley in 1846.
A linear algebraic group $G$ is called stably Cayley
if $G \times S$ is Cayley for some split $k$-torus $S$.
We classify stably Cayley semisimple groups
over an arbitrary field $k$ of characteristic 0.
\EndAbstract

\MSC
20G15, 20C10.
\EndMSC
\KEY
Linear algebraic group, stably Cayley group,
quasi-permutation lattice.
\EndKEY
\Address
Mikhail Borovoi
Raymond and Beverly
Sackler School of
Mathematical Sciences,
Tel Aviv University
6997801 Tel Aviv
Israel
borovoi@post.tau.ac.il
\Address
Boris Kunyavski\u\i
 Department of Mathematics
Bar-Ilan University
5290002 Ramat Gan
 Israel
kunyav@macs.biu.ac.il
\Address
\Address
\EndAddress 

\newtheorem{lemma}{Lemma}[section]
\newtheorem{proposition}[lemma]{Proposition}
\newtheorem{theorem}[lemma]{Theorem}
\newtheorem{corollary}[lemma]{Corollary}
\newtheorem{condition}[lemma]{Condition}
\newtheorem{exercise}[lemma]{Exercise}

\newtheorem{lemmas}{Lemma}[subsection]
\newtheorem{propositions}[lemmas]{Proposition}
\newtheorem{theorems}[lemmas]{Theorem}
\newtheorem{corollarys}[lemmas]{Corollary}
\newtheorem{conditions}[lemmas]{Condition}

\theoremstyle{definition}

\newtheorem{example}[lemma]{Example}
\newtheorem{definition}[lemma]{\sl Definition}
\newtheorem{conjecture}[lemma]{Conjecture}
\newtheorem{question}[lemma]{Question}

\newtheorem{examples}[lemmas]{Example}
\newtheorem{definitions}[lemmas]{Definition}
\newtheorem{conjectures}[lemmas]{Conjecture}
\newtheorem{questions}[lemmas]{Question}

\newtheorem{subsec}[lemma]{}

\theoremstyle{remark}

\newtheorem{remark}[lemma]{Remark}
\newtheorem{remarks}[lemmas]{Remark}
\newtheorem{notation}{Notation}\def\thenotation{}

\numberwithin{equation}{section}


\newcommand{\m}{^{\times}}

\newcommand{\Gspl}{{G}_{\rm spl}}
\newcommand{\Tspl}{{T}_{\rm spl}}
\newcommand{\Bspl}{{B}_{\rm spl}}

\newcommand{\ff}{F^{\times}}
\newcommand{\fs}{F^{\times 2}}
\newcommand{\llg}{\longrightarrow}
\newcommand{\tens}{\otimes}
\newcommand{\inv}{^{-1}}
\newcommand{\Dfn}{\stackrel{\mathrm{def}}{=}}
\newcommand{\iso}{\stackrel{\sim}{\to}}
\newcommand{\leftexp}[2]{{\vphantom{#2}}^{#1}{#2}}
\newcommand{\mult}{\mathrm{mult}}
\newcommand{\sep}{\mathrm{sep}}
\newcommand{\id}{\mathrm{id}}
\newcommand{\diag}{\mathrm{diag}}
\newcommand{\op}{^{\mathrm{op}}}
\newcommand{\ra}{\rightarrow}
\newcommand{\xra}{\xrightarrow}
\newcommand{\gen}{\mathrm{gen}}
\newcommand{\CH}{\operatorname{CH}}
\newcommand{\Span}{\operatorname{Span}}
\renewcommand{\Im}{\operatorname{Im}}
\newcommand{\Ad}{\operatorname{Ad}}
\newcommand{\NN}{\operatorname{N}}
\newcommand{\Ker}{\operatorname{Ker}}
\newcommand{\Pic}{\operatorname{Pic}}
\newcommand{\Tor}{\operatorname{Tor}}
\newcommand{\Lie}{\operatorname{Lie}}
\newcommand{\ind}{\operatorname{ind}}
\newcommand{\ch}{\operatorname{char}}
\newcommand{\Inv}{\operatorname{Inv}}
\newcommand{\Int}{\operatorname{Int}}
\newcommand{\Inn}{\operatorname{Inn}}
\newcommand{\SInn}{\operatorname{SInn}}
\newcommand{\res}{\operatorname{res}}
\newcommand{\cor}{\operatorname{cor}}
\newcommand{\Br}{\operatorname{Br}}
\newcommand{\Nil}{\operatorname{Nil}}
\newcommand{\Spec}{\operatorname{Spec}}
\newcommand{\Proj}{\operatorname{Proj}}
\newcommand{\SK}{\operatorname{SK}}
\newcommand{\Gal}{\operatorname{Gal}}
\newcommand{\SL}{\operatorname{SL}}
\newcommand{\PGL}{\operatorname{PGL}}
\newcommand{\GL}{\operatorname{GL}}
\newcommand{\gSL}{\operatorname{\mathbf{SL}}}
\newcommand{\gO}{\operatorname{\mathbf{O}}}
\newcommand{\gSO}{\operatorname{\mathbf{SO}}}
\newcommand{\gPSO}{\operatorname{\mathbf{PSO}}}
\newcommand{\gG}{\operatorname{\mathbf{G}}}
\newcommand{\gSp}{\operatorname{\mathbf{Sp}}}
\newcommand{\Sympl}{\operatorname{\gSp}}
\newcommand{\gGL}{\operatorname{\mathbf{GL}}}
\newcommand{\gPGL}{\operatorname{\mathbf{PGL}}}
\newcommand{\gPGU}{\operatorname{\mathbf{PGU}}}
\newcommand{\gSpin}{\operatorname{\mathbf{Spin}}}
\newcommand{\gSU}{\operatorname{\mathbf{SU}}}
\newcommand{\gPSU}{\operatorname{\mathbf{PSU}}}
\newcommand{\gU}{\operatorname{\mathbf{U}}}
\newcommand{\End}{\operatorname{End}}
\newcommand{\Hom}{\operatorname{Hom}}
\newcommand{\Mor}{\operatorname{Mor}}
\newcommand{\Map}{\operatorname{Map}}
\newcommand{\Aut}{\operatorname{Aut}}
\newcommand{\Coker}{\operatorname{Coker}}
\newcommand{\Ext}{\operatorname{Ext}}
\newcommand{\Nrd}{\operatorname{Nrd}}
\newcommand{\Norm}{\operatorname{Norm}}
\newcommand{\spann}{\operatorname{span}}
\newcommand{\Symd}{\operatorname{Symd}}
\newcommand{\Sym}{\operatorname{S}}
\newcommand{\red}{\operatorname{red}}
\newcommand{\Prp}{\operatorname{Prp}}
\newcommand{\Prd}{\operatorname{Prd}}
\newcommand{\tors}{\operatorname{tors}}
\newcommand{\Tr}{\operatorname{Tr}}
\newcommand{\Trd}{\operatorname{Trd}}
\newcommand{\disc}{\operatorname{disc}}
\newcommand{\divi}{\operatorname{div}}
\newcommand{\GCD}{\operatorname{g.c.d.}}
\newcommand{\rank}{\operatorname{rank}}
\newcommand{\A}{\mathbb{A}}
\renewcommand{\P}{\mathbb{P}}
\newcommand{\Z}{\mathbb{Z}}
\newcommand{\bbZ}{\mathbb{Z}}
\newcommand{\bbG}{\mathbb{G}}
\newcommand{\bbQ}{\mathbb{Q}}
\newcommand{\N}{\mathbb{N}}
\newcommand{\F}{\mathbb{F}}
\newcommand{\Q}{\mathbb{Q}}
\newcommand{\R}{\mathbb{R}}
\newcommand{\C}{\mathbb{C}}
\newcommand{\QZ}{\mathop{\mathbb{Q}/\mathbb{Z}}}
\newcommand{\gm}{\mathbb{G}_m}
\newcommand{\hh}{\mathbb{H}}

\newcommand{\cA}{\mathcal A}
\newcommand{\cB}{\mathcal B}
\newcommand{\cC}{\mathcal C}
\newcommand{\cU}{\mathcal U}
\newcommand{\cI}{\mathcal I}
\newcommand{\cJ}{\mathcal J}
\newcommand{\cO}{\mathcal O}
\newcommand{\cF}{\mathcal F}
\newcommand{\cG}{\mathcal G}
\newcommand{\cL}{\mathcal L}
\newcommand{\cP}{\mathcal P}

\newcommand{\falg}{F\mbox{-}\mathfrak{alg}}
\newcommand{\fgroups}{F\mbox{-}\mathfrak{groups}}
\newcommand{\fields}{F\mbox{-}\mathfrak{fields}}
\newcommand{\groups}{\mathfrak{Groups}}
\newcommand{\abelian}{\mathfrak{Ab}}
\newcommand{\p}{\mathfrak{p}}

\newcommand{\Kbar}{\overline{K}}
\newcommand{\Kgenbar}{\overline{K_{\gen}}}

\def\kbar{{\bar{k}}}
\def\Out{{\mathrm{Out}}}
\def\ad{{\mathrm{ad}}}
\def\ttt{{\mathfrak{t}}}
\def\ggg{{\mathfrak{g}}}
\def\G{{\mathbf{G}}}
\def\inn{{\mathrm{inn}}}

\def\AA{{\mathbf{A}}}
\def\BB{{\mathbf{B}}}
\def\CC{{\mathbf{C}}}
\def\DD{{\mathbf{D}}}
\def\GG{{\mathbf{G}}}
\def\ve{{\varepsilon}}
\def\vk{{\varkappa}}
\def\half{{\frac{1}{2}}}
\def\fl{{\mathrm{fl}}}

\def\A{{\mathbb{A}}}

\def\into{\hookrightarrow}

\def\gg{{\mathfrak{g}}}

\def\C{{\mathbb{C}}}
\def\Q{{\mathbb{Q}}}
\def\Z{{\mathbb{Z}}}

\def\coker{{\rm coker}}
\def\Hom{{\rm Hom}}

\def\ba{{\mathbf{a}}}

\def\bo{{\mathbf{1}_q}}
\def\go{{\{0,\bo\}}}

\def\aa{{\mathbf{a}}}
\def\ga{{\langle\aa\rangle}}

\def\bb{{\boldsymbol{\beta}}}
\def\bgen{{\langle\bb\rangle}}

\def\ov{\overline}

\def\Gbar{{\overline{G}}}
\def\Bbar{{\overline{B}}}
\def\Tbar{{\overline{T}}}
\def\im{{\rm im}}

\newcommand{\isoto}{\overset{\sim}{\to}}
\def\SAut{{\mathrm{SAut}}}
\def\lsig{{{}^\sigma}}
\def\sP{{\mathcal{P}}}
\newcommand{\labelto}[1]{\xrightarrow{\makebox[1.5em]{\scriptsize ${#1}$}}}

\def\Gammac{{\Gamma_{\text{\rm{c}}}}}
\def\Stab{{\mathrm{Stab}}}
\def\spl{{\rm spl}}
\def\SAut{{\rm SAut}}

\def\twisted{{\rm twisted}}
\def\sign{{\rm sign\,}}
\def\sS{{{\rm SU}_3}}

\def\SU{{\bf SU}}
\def\SL{{\bf SL}}
\def\GL{{\bf GL}}
\newcommand{\lt}{\mathfrak{t}}
\newcommand{\Lt}{\mathfrak{t}_L}
\def\bbP{{\mathbb{P}}}
\newcommand{\birat}{\overset{\simeq}{\dashrightarrow}}

\def\one{{\boldsymbol{1}}}
\def\T{{\mathfrak{T}}}
\newcommand{\onto}{\twoheadrightarrow}

\def\alphabar{{\overline{\alpha}}}
\def\wbar{{\overline{w}}}
\def\Qbar{{\overline{Q}}}

\def\SO{{\bf SO}}
\def\ms{\medskip\par}
\newcommand{\X}{{\mathcal{X}}}
\newcommand{\XX}{{\textsf{\upshape X}}}
\def\Transp{{\rm Transp}}

\def\xx{{\bf x}}
\def\rk{{\rm rank\,}}
\def\SS{{\mathfrak{S}}}
\def\nubar{\overline{\nu}}

\def\cupdot{\sqcup}
\def\bigcupdot{\bigsqcup}

\newcommand{\sms}{\medskip}

\def\Ubar{{\overline{U}}}
\def\Vbar{{\overline{V}}}
\def\veb{{\overline{\ve}}}

\def\nub{{\boldsymbol{\nu}}}

\def\rhotil{{\tilde\rho}}
\def\sigmatil{{\tilde\sigma}}
\def\tautil{{\tilde\tau}}

\date{\today}

\bigskip
\hfill {\em To Alexander Merkurjev on the occasion of his 60th birthday}

\setcounter{section}{-1}

\section{Introduction}

Let $k$ be a field of characteristic 0 and $\kbar$  a fixed
algebraic closure of $k$. Let  $G$ be a connected linear algebraic
$k$-group. A birational isomorphism $\phi \colon G
\stackrel{\simeq}{\dashrightarrow} \Lie(G)$ is called a {\em Cayley
map} if it is equivariant with respect to the conjugation action of
$G$ on itself and the adjoint action of $G$ on its Lie algebra
$\Lie(G)$, respectively. A linear algebraic $k$-group $G$ is called
{\em Cayley} if it admits a Cayley map, and {\em stably Cayley} if
$G\times_k (\bbG_{{\text{\rm{m}}},k})^r$ is Cayley for some $r \ge
0$. Here $\bbG_{{\text{\rm{m}}},k}$ denotes the multiplicative group
over $k$. These notions were introduced by Lemire, Popov and
Reichstein \cite{LPR}; for a more detailed discussion and numerous
classical examples  we refer the reader to~\cite[Introduction]{LPR}.
The main results of~\cite{LPR} are the classifications of  Cayley
and stably Cayley simple groups over an  algebraically closed field
$k$ of characteristic $0$. Over an arbitrary field $k$ of
characteristic 0 stably Cayley simple $k$-groups, stably Cayley
simply connected semisimple $k$-groups and stably Cayley adjoint
semisimple $k$-groups  were classified in  the paper \cite{BKLR} of
Borovoi, Kunyavski\u\i, Lemire and Reichstein. In the present paper,
building on results of \cite{LPR} and \cite{BKLR}, we classify  all
stably Cayley {\em semisimple}\,\ $k$-groups (not necessarily
simple, or simply connected, or adjoint) over an arbitrary field $k$
of characteristic 0. \sms

By a semisimple (or reductive) $k$-group we always mean a {\em connected} semisimple (or reductive) $k$-group.
We shall need the following result of \cite{BKLR} extending \cite[Theorem 1.28]{LPR}.

\begin{theorem}[{\cite[Theorem 1.4]{BKLR}}] \label{prop:BKLR-abs-simple}
Let $k$ be a field of characteristic $0$ and $G$ an absolutely simple $k$-group.
Then the following conditions are equivalent:
\begin{enumerate}[\upshape(a)]
\item $G$ is stably Cayley over $k$;
\item $G$ is an arbitrary $k$-form of one of the following groups:
\[ \text{$\gSL_3$, $\gPGL_2$, $\gPGL_{2n+1}$ $(n\ge 1)$, $\gSO_n$ $(n \ge 5)$,
$\Sympl_{2n}$ $(n \ge 1)$, $\gG_2$}, \]
or an {\em inner} $k$-form of
$\gPGL_{2n}$ $(n \ge 2)$.
\end{enumerate}
\end{theorem}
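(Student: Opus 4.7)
The plan is to translate the stably Cayley property into a lattice-theoretic condition on the character lattice of a maximal torus, invoke the classification over $\kbar$ from \cite{LPR}, and then analyze which $k$-forms of each surviving type inherit the property.

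\textbf{Lattice criterion and reduction to $\kbar$.} I would begin by recalling the dictionary, developed in \cite{LPR} and extended to an arbitrary base field in \cite{BKLR}, according to which a reductive $k$-group $G$ is stably Cayley if and only if the character lattice $X^{*}(T)$ of a maximal $k$-torus $T\subset G$ is a quasi-permutation lattice over the extended Weyl group $\tilde W := W \rtimes \Gal(\kbar/k)$, where $W$ is the absolute Weyl group and the Galois group acts through its image in $\Aut(\mathrm{Dyn}(G))$. Since passage to $\kbar$ preserves the stably Cayley property (and kills the Galois factor in $\tilde W$), the group $G_{\kbar}$ is stably Cayley over $\kbar$, so by \cite[Theorem~1.28]{LPR} it must be one of $\gSL_{3}$, $\gPGL_{n}$ $(n\ge 2)$, $\gSO_{n}$ $(n\ge 5)$, $\Sympl_{2n}$ $(n\ge 1)$, or $\gG_{2}$. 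Thus $G$ is automatically a $k$-form of one of the groups appearing in (b), and the content of the theorem is to determine which forms survive.

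\textbf{Case analysis of $k$-forms.} The Dynkin diagrams of $B_{n}$ $(n\ge 2)$, $C_{n}$ $(n\ge 1)$, and $\gG_{2}$ have no nontrivial automorphisms, so every $k$-form of such a group is inner; the Galois group then acts on $X^{*}(T)$ through $W$ itself, so the $\tilde W$-lattice $X^{*}(T)$ coincides with the split-case lattice and is quasi-permutation by the previous step. The same argument covers the inner forms of $\gSL_{3}$, of $\gPGL_{n}$, and of $\gSO_{2n}$. What remains is the analysis of the \emph{outer} $k$-forms, where the nontrivial Dynkin involution enters the Galois action on $X^{*}(T)$: outer forms of $\gSL_{3}$ (of $\gSU_{3}$-type), outer forms of $\gPGL_{2n+1}$, outer forms of $\gPGL_{2n}$ $(n\ge 2)$, and outer forms of $\gSO_{2n}$ $(n\ge 4)$. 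For each of these I would fix a pinning, describe the twisted Galois action on $X^{*}(T)$ by an explicit diagram involution, and test whether the resulting $\tilde W$-lattice is quasi-permutation.

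\textbf{Main obstacle.} I expect the delicate point to be the asymmetry between types $A_{2n}$ and $A_{2n-1}$: one must show that the outer forms of $\gPGL_{2n+1}$ \emph{are} stably Cayley while the outer forms of $\gPGL_{2n}$ $(n\ge 2)$ are \emph{not}, and simultaneously handle the outer cases of $\gSL_{3}$ and $\gSO_{2n}$ on the positive side. For the positive cases, one aims to exhibit an explicit short exact sequence presenting $X^{*}(T)$ between two permutation $\tilde W$-lattices, possibly after adding a permutation summand; this verifies quasi-permutation directly. The crux is the negative case: to rule out outer $\gPGL_{2n}$ $(n\ge 2)$ one must produce a cohomological obstruction to $X^{*}(T)$ being quasi-permutation. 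The natural approach is to restrict the lattice to a carefully chosen subgroup $H \le \tilde W$ --- for instance a cyclic group generated by the product of a Coxeter element of $W$ with the diagram involution --- and to exhibit a nonzero class in a Tate cohomology group $\hat H^{i}(H, X^{*}(T))$ that vanishes on every permutation, hence on every quasi-permutation, $\tilde W$-lattice. Locating and computing this obstruction, and verifying simultaneously that it vanishes in the positive cases, is the step I expect to require the most work.
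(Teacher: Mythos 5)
First, a remark on what your attempt is being compared against: this paper does \emph{not} prove Theorem \ref{prop:BKLR-abs-simple}; it is imported verbatim from \cite[Theorem 1.4]{BKLR}. The only ingredients of its proof reproduced here are the positive half (Appendix \ref{app}, in terms of lattices) and the negative-case machinery (Propositions \ref{prop:J-Gamma} and \ref{prop:one-vector}). Your overall skeleton --- the quasi-permutation criterion for the character lattice over the finite group $\im\,\rho\cdot W$ (not the profinite $W\rtimes\Gal(\kbar/k)$, a minor imprecision in your formulation), reduction to $\kbar$ via \cite[Theorem 1.28]{LPR}, the observation that all inner forms are then automatic, and explicit permutation resolutions for the positive outer cases (exactly what Appendix \ref{app} does for outer $\gPGL_{2n+1}$, outer $\gSO_{2n}$, $\gSU_3$ and $\gG_2$) --- is indeed the route of \cite{BKLR}.

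The genuine gap is in your plan for the one hard negative case, the outer forms of $\gPGL_{2n}$, $n\ge 2$, and it is twofold. First, your inference ``a Tate cohomology class that vanishes on every permutation lattice hence vanishes on every quasi-permutation lattice'' is false: a quasi-permutation lattice is a \emph{kernel} $0\to L\to P\to P'\to 0$, not a direct summand of a permutation lattice, and Tate cohomology does not pass to kernels. Concretely, the augmentation ideal $I_H=\ker(\Z[H]\to\Z)$ is quasi-permutation by Definition \ref{def.qp}, yet $H^1(H,I_H)\cong\Z/|H|\Z\neq 0$, whereas $H^1(H,P)=0$ for every permutation lattice $P$. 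What is needed is an invariant of the \emph{equivalence class} of $L$ in the sense of Section \ref{sec:1}, such as Voskresenski\u\i's $\Sha^2(\Gamma,L)=\ker\bigl[H^2(\Gamma,L)\to\prod_{\gamma}H^2(\langle\gamma\rangle,L)\bigr]$, which does vanish on quasi-invertible lattices. Second --- and this kills your specific plan --- no such invariant can live on a \emph{cyclic} subgroup $H$: $\Sha^2(H,L)=0$ tautologically (restrict to $H$ itself), and since all Sylow subgroups of a cyclic group are cyclic, Endo--Miyata theory shows the flasque class of $L|_H$ is invertible, so Tate-cohomological data cannot distinguish $L|_H$ from a quasi-permutation lattice at all. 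Hence your cyclic subgroup generated by a Coxeter element times the diagram involution detects nothing. The actual argument (\cite[Example 10.7]{BKLR}, invoked in Section \ref{sec:6} of this paper; the same mechanism drives Proposition \ref{prop:one-vector}) restricts instead to a non-cyclic subgroup $\Gamma\cong\Z/2\Z\times\Z/2\Z$ of $\im\,\rho\cdot W$ --- necessarily involving the outer involution, since the lattice is quasi-permutation over $W$ alone --- and shows the restricted character lattice is equivalent to a direct sum of $J_\Gamma$ and rank-one lattices; as $J_\Gamma$ is not quasi-invertible (Proposition \ref{prop:J-Gamma}, via $\Sha^2(\Gamma,J_\Gamma)\cong\Z/2\Z$), the character lattice is not even quasi-invertible. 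Replacing your cyclic $H$ by such a Klein four-group, and your raw Tate classes by $\Sha^2$, is precisely the missing idea.
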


In this paper we classify stably Cayley semisimple groups
over an {\em algebraically closed} field $k$ of characteristic 0 (Theorem \ref{thm:product})
and, more generally, over an {\em arbitrary} field $k$ of characteristic 0 (Theorem \ref{thm:product-non-closed}).
Note that Theorem  \ref{thm:product} was conjectured in \cite[Remark 9.3]{BKLR}.

\begin{theorem}\label{thm:product}
Let $k$ be an algebraically closed field of characteristic $0$ and $G$ a semisimple $k$-group.
Then $G$ is stably Cayley if and only if $G$ decomposes into
a direct product $G_1\times_k\dots\times_k G_s$ of its normal subgroups,
where each $G_i$  $(i=1,\dots,s)$ either is a stably Cayley simple $k$-group
(i.e., isomorphic to one of the groups listed in Theorem \ref{prop:BKLR-abs-simple})
or is isomorphic to the stably Cayley semisimple $k$-group $\gSO_4$.
\end{theorem}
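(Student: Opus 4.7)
The approach is to reduce the theorem to a lattice-theoretic question via the criterion, established in \cite{LPR} and used throughout \cite{BKLR}, that a semisimple $k$-group $G$ with maximal torus $T$ and Weyl group $W$ is stably Cayley if and only if the character lattice $L := X^*(T)$ is a quasi-permutation $W$-lattice, i.e.\ fits in a short exact sequence $0 \to L \to P_1 \to P_2 \to 0$ with $P_1, P_2$ permutation $W$-lattices.

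The ``if'' direction is then essentially immediate. If $G = G_1 \times_k \cdots \times_k G_s$, then $L = \bigoplus_i L_i$ as a module for $W = \prod_i W_i$ (with $W_i$ acting trivially on $L_j$ for $j \ne i$), and a direct sum of quasi-permutation lattices for a direct product of Weyl groups is again quasi-permutation. Applying Theorem \ref{prop:BKLR-abs-simple} to the simple factors in the list, together with a direct lattice computation (with $W = S_2 \times S_2$) showing that $\gSO_4$ is stably Cayley, handles this direction.

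For the converse, suppose $G$ is stably Cayley. Write $G = \widetilde G/F$ with $\widetilde G = \widetilde G_1 \times \cdots \times \widetilde G_s$ a product of simple simply connected factors and $F \subset Z(\widetilde G) = \prod_i Z(\widetilde G_i)$ finite central. Decompose the root system $\Phi = \bigsqcup_i \Phi_i$ into irreducible components, with associated root and weight lattices $Q_i \subset P_i$ and Weyl groups $W_i$; then $Q := \bigoplus_i Q_i \subset L \subset \bigoplus_i P_i =: P$. The group $G$ is a direct product of simple semisimple groups $G'_i := \widetilde G_i/F_i$ precisely when $F$ splits as $\prod_i F_i$, equivalently when $L = \bigoplus_i (L \cap P_i)$. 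When this splitting occurs, each $G'_i$ inherits being stably Cayley (by direct-summand inheritance of the quasi-permutation property in this block setting), and hence must lie on the list of Theorem \ref{prop:BKLR-abs-simple}.

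The main obstacle is excluding all non-split configurations of $F$ except the $\gSO_4$-coupling. The strategy is: for each putative mixing --- a nonzero element of $P/Q = \bigoplus_i P_i/Q_i$ with non-trivial support in at least two factors, such as a diagonal $\mu_n \subset Z(\gSL_n)^{\times 2}$ for $n \ge 3$, mixings coupling an $A_{2n-1}$-factor with a $D_m$-factor via their $\mu_2$-centers, or diagonals among several $D_m$-factors --- construct an explicit cyclic subgroup $W' \le W$ generated by a suitable product of reflections or Coxeter-type elements in the constituent $W_i$, and exhibit a nonzero element of $H^1(W', L)$, thereby contradicting the quasi-permutation hypothesis. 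The unique surviving configuration is the diagonal $\mu_2 \subset Z(\gSL_2)^{\times 2}$: here both $W_i$ are of order $2$ and the relevant cohomology computations all collapse, yielding $\gSO_4$ as the only non-split stably Cayley component. Modeling these computations on those of \cite{BKLR} for the absolutely simple case should furnish workable templates, but the combinatorial bookkeeping over all possible mixings is expected to be the main source of technical complexity.
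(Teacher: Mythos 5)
Your overall frame agrees with the paper: the reduction via Proposition \ref{prop:LPR,BKLR} to classifying quasi-permutation intermediate lattices $Q\subset L\subset P$, the immediacy of the ``if'' direction, and the dichotomy ``$F$ splits blockwise vs.\ nontrivial mixing, with only the $\AA_1$--$\AA_1$ coupling $\X(\SO_4)$ surviving.'' The genuine gap is your exclusion mechanism: you propose to kill each nontrivial mixing by finding a \emph{cyclic} subgroup $W'\subset W$ with $H^1(W',L)\neq 0$ and declaring this incompatible with quasi-permutation. This cannot work, for two independent reasons. First, nonvanishing $H^1$ is not an obstruction: from $0\to L\to P\to P'\to 0$ one only gets $H^1(W',L)\cong\coker\bigl(P^{W'}\to (P')^{W'}\bigr)$, which is frequently nonzero; e.g.\ the rank-one sign lattice $\Z^-$ over $\Z/2\Z$ is quasi-permutation (it is the kernel of the augmentation $\Z[\Z/2\Z]\to\Z$), yet $H^1(\Z/2\Z,\Z^-)=\Z/2\Z$. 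Second, and fatally for any variant of the idea, by the theorem of Endo and Miyata every lattice over a cyclic group is quasi-permutation (equivalently, every torus split by a cyclic extension is stably rational); since restriction of a quasi-permutation lattice to a subgroup stays quasi-permutation, \emph{no} invariant of $L|_{W'}$ for cyclic $W'$ can ever contradict quasi-permutation of $L$. The obstruction must be detected on non-cyclic subgroups.

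That is exactly how the paper proceeds. For mixings involving $\BB$/$\DD$/$\AA_{2n-1}$ factors it embeds the Klein four-group $\Gamma=\Z/2\Z\times\Z/2\Z$ into $W$ via sign changes and products of transpositions, and shows that $L|_\Gamma$ is equivalent to $J_\Gamma\oplus(\text{rank-one lattices})$, where $J_\Gamma$ is Voskresenski\u\i's lattice, which is not even quasi-invertible (Propositions \ref{prop:J-Gamma} and \ref{prop:one-vector}); the underlying invariant is $\Sha^2(\Gamma,L)\cong\Z/2\Z$ (Remark \ref{rem:mistake} vicinity, see the remark after Proposition \ref{prop:one-vector}), and $\Sha^2$ by its very definition vanishes upon restriction to every cyclic subgroup --- so it is intrinsically a non-cyclic phenomenon. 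For the purely $\AA$-type mixings you mention (a diagonal $\mu_d$ across factors $\AA_{n_i-1}$, $n_i\ge 3$) the paper embeds the elementary abelian group $(\Z/p\Z)^{n/p}$ and invokes the results of \cite{LPR} on $\Lambda_n(d)$ (Section \ref{sec:3}, Proposition \ref{prop:latticeL}, proved via Roiter's lemma and Theorem \ref{thm:qp-lattices}); the case where all factors are $\AA_1$ is quoted from \cite[Theorem 18.1]{BKLR}. These ingredients are then assembled by induction on the number of components (split case / $\AA_{n-1}$-case / $\AA_1$-case) in Theorem \ref{thm:almost}, whose Steps 2--3 show that any remaining $L$ literally has the shape required by Proposition \ref{prop:one-vector}. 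So your outline could be repaired, but only by replacing its central mechanism: elementary abelian non-cyclic $p$-subgroups and a quasi-invertibility/$\Sha^2$ obstruction, rather than cyclic subgroups and $H^1$.
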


\begin{theorem} \label{thm:product-non-closed}
Let $G$ be a  semisimple $k$-group over a field $k$ of characteristic 0 (not necessarily algebraically closed).
Then $G$ is stably Cayley over $k$
if and only if  $G$ decomposes into a direct product $G_1\times_k\dots\times_k G_s$ of its normal $k$-subgroups,
where each $G_i$ $(i=1,\dots,s)$ is isomorphic to the Weil restriction $R_{l_i/k} G_{i,l_i}$
for some finite field extension $l_i/k$,
and each $G_{i,l_i}$ is either a stably Cayley absolutely simple group over $l_i$
(i.e., one of the groups listed in Theorem \ref{prop:BKLR-abs-simple})
or  an  $l_i$-form of the semisimple group $\gSO_4$
(which is always stably Cayley, but is not absolutely simple and can be not $l_i$-simple).
\end{theorem}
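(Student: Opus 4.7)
\medskip
\noindent\textbf{Proof plan.}
The plan is to combine Theorem \ref{thm:product} with Galois descent, using as the main tool the translation between the stably Cayley property and a lattice-theoretic condition established in \cite{LPR} and \cite{BKLR}: a reductive $k$-group $G$ with maximal torus $T$, Weyl group $W=N_G(T)/T$, and absolute Galois group $\Gamma=\Gal(\kbar/k)$, is stably Cayley over $k$ if and only if the $W\rtimes\Gamma$-lattice $X^*(T)$ is \emph{quasi-permutation}, i.e.\ fits in a short exact sequence $0\to X^*(T)\to P_1\to P_2\to 0$ of $W\rtimes\Gamma$-lattices with $P_1,P_2$ permutation lattices. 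All arguments below are eventually carried out at the level of such lattices; direct products and Weil restrictions of groups correspond respectively to direct sums and induction of lattices.

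For the \emph{if} direction, the class of Cayley groups is closed under direct products (apply Cayley maps componentwise), hence so is the class of stably Cayley groups. Given $G'$ stably Cayley over a finite extension $l/k$, one shows $R_{l/k}G'$ is stably Cayley over $k$ by noting that the character lattice of a maximal torus of $R_{l/k}G'$ is $\operatorname{Ind}_{\Gamma_l}^{\Gamma}X^*(T')$, and that induction sends permutation lattices to permutation lattices and is exact, hence sends quasi-permutation to quasi-permutation. It remains to verify separately that every $l$-form of $\gSO_4$ is stably Cayley over $l$; this is a finite computation on the rank-$2$ character lattice of a maximal torus of $\gSO_4$, tracking the $W\rtimes\Gamma_l$-action through the two possibilities for the action on the Dynkin diagram (corresponding to inner and outer forms, since $\Out(\gSO_4)=\Z/2$).

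For the \emph{only if} direction, base-changing a Cayley map to $\kbar$ yields a Cayley map, so $G_{\kbar}$ is stably Cayley. Theorem \ref{thm:product} then gives a decomposition $G_{\kbar}=H_1\times\cdots\times H_t$ with each $H_j$ either on the list of Theorem \ref{prop:BKLR-abs-simple} or isomorphic to $\gSO_4$. The crucial step is that this decomposition is canonical and therefore $\Gamma$-stable up to permutation of factors: the almost-simple normal subgroups $L_1,\dots,L_n$ of $G_{\kbar}$ are canonical, and two such subgroups $L_i,L_j$ belong to the same indecomposable direct factor precisely when $L_i\cap L_j\ne 1$. In the present situation this identification occurs only in the $\gSO_4$ case, which groups the two $\gSL_2$-factors of $\gSO_4$ meeting in $\mu_2$; all other $H_j$ consist of a single almost-simple factor. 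Hence $\Gamma$ acts on $\{H_1,\dots,H_t\}$ by permutations. Partitioning this set into $\Gamma$-orbits and applying the standard Weil-restriction formalism gives a $k$-decomposition $G=G_1\times\cdots\times G_s$ with $G_i\cong R_{l_i/k}G_{i,l_i}$, where $l_i\subset\kbar$ is the fixed field of the stabilizer of a chosen representative and $G_{i,l_i}$ is the corresponding $l_i$-form.

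It remains to show each $G_{i,l_i}$ is stably Cayley over $l_i$. Since $X^*(T_G)=\bigoplus_i X^*(T_{G_i})$ as $W\rtimes\Gamma$-lattices and a direct summand of a quasi-permutation lattice is quasi-permutation (a standard lattice lemma), each $G_i$ is stably Cayley over $k$. One then needs the cancellation: if $R_{l/k}G'$ is stably Cayley over $k$, then $G'$ is stably Cayley over $l$. At the lattice level this amounts to showing that if $\operatorname{Ind}_{\Gamma_l}^{\Gamma}M$ is quasi-permutation as a $W\rtimes\Gamma$-lattice, then $M$ is quasi-permutation as a $W'\rtimes\Gamma_l$-lattice. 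By Frobenius reciprocity $M$ is a direct summand of $\operatorname{Res}_{\Gamma_l}^{\Gamma}\operatorname{Ind}_{\Gamma_l}^{\Gamma}M$; restriction of a permutation $\Gamma$-lattice along $\Gamma_l\subset\Gamma$ is a permutation $\Gamma_l$-lattice (via Mackey's formula), and direct summands inherit the quasi-permutation property. The main obstacle in this program is the canonicity step: one must justify that despite $\gSO_4$ being an almost-direct product of two $\gSL_2$'s it is treated as an indivisible direct factor and that this assignment survives the Galois action; once this is secured, everything else reduces to the lattice-theoretic manipulations already developed in \cite{LPR} and \cite{BKLR}.
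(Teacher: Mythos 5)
Your overall architecture for the ``only if'' direction (base change to $\kbar$, apply Theorem \ref{thm:product}, show the decomposition is Galois-stable, descend via Weil restriction) matches the paper's Section \ref{sec:6}, and your canonicity argument via almost-simple normal subgroups is an acceptable substitute for the paper's Remark \ref{rem:pairs}: in the present situation, where every factor is simple or isomorphic to $\gSO_4$, pairwise nontrivial intersection does detect the grouping (though as a general criterion for indecomposable factors it fails, e.g. for the quotient of $(\SL_2)^3$ by a diagonally embedded $\mu_2$, whose three almost-simple factors intersect pairwise trivially). The genuine gap is in your last step. Twice you invoke the ``standard lattice lemma'' that a direct summand of a quasi-permutation lattice is quasi-permutation: once to conclude that each $G_i$ is stably Cayley over $k$, and once, via Mackey/Frobenius reciprocity, to conclude that if $\operatorname{Ind}_{\Gamma_l}^{\Gamma}M$ is quasi-permutation then $M$ is quasi-permutation. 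This lemma is false: a direct summand of a quasi-permutation lattice is, by definition, only \emph{quasi-invertible}, and quasi-invertible lattices need not be quasi-permutation. Keeping these two notions distinct is precisely why the paper introduces quasi-invertibility in Section \ref{sec:1}. The summand trick does work over an algebraically closed field, because there the complementary summand $\bigoplus_{j\ne i}\X(G_j)$ is a trivial, hence permutation, lattice for the subgroup $W_i$, so $\X(G_i)\sim\X(G)$ as $W_i$-lattices (this is the mechanism behind Lemma \ref{lem:2.8} and \cite[Lemma 4.7]{LPR}); over a non-closed field the Galois group acts diagonally on all summands (and the non-identity Mackey terms of $\operatorname{Res}\operatorname{Ind}M$ are not trivial $\Gamma_l$-lattices), so the equivalence is not available and only quasi-invertibility survives.

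This is not a removable technicality: the cancellation statement you want (``$R_{l/k}G'$ stably Cayley over $k$ implies $G'$ stably Cayley over $l$'') is exactly the hard point, and the paper never proves it in this generality. Instead it proves just enough for the groups at hand: each $G_{j,l_j}$ is a direct factor of the stably Cayley $l_j$-group $G_{l_j}$; by \cite[Lemma 4.7]{LPR} its base change to $\kbar$ is stably Cayley; comparing Theorem \ref{prop:BKLR-abs-simple} with \cite[Theorem 1.28]{LPR} shows the only possible failure is that $G_{j,l_j}$ is an \emph{outer} form of $\gPGL_{2n}$, $n\ge 2$; and that case is excluded because such a form has non-quasi-invertible character lattice (\cite[Example 10.7]{BKLR}), whereas a direct factor of a stably Cayley group must have quasi-invertible character lattice (\cite[Proposition 10.8]{BKLR}). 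In other words, the weaker conclusion (quasi-invertibility) that your summand arguments actually yield suffices only when combined with this classification of forms; without that extra input your proof does not close.
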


Note that  the ``if'' assertions in Theorems \ref{thm:product} and \ref{thm:product-non-closed}
follow immediately from the definitions.
\sms

The rest of the paper is structured as follows.
In Section \ref{sec:1} we recall the definition of a quasi-permutation lattice
and state some known results, in particular, an assertion from \cite[Theorem 1.27]{LPR} that reduces Theorem \ref{thm:product}
to an assertion on lattices.
In Sections \ref{sec:2} and \ref{sec:3} we construct certain families of non-quasi-permutation lattices.
In particular, we correct an inaccuracy in \cite{BKLR}; see Remark \ref{rem:mistake}.
In Section \ref{sec:4} we prove (in the language of lattices)  Theorem \ref{thm:product}
in the special case when $G$  is isogenous to a direct product of simple groups of type $\AA_{n-1}$  with $n\ge 3$.
In Section \ref{sec:5} we prove (again in the language of lattices) Theorem \ref{thm:product} in the general case.
In Section \ref{sec:6}  we deduce Theorem \ref{thm:product-non-closed} from Theorem \ref{thm:product}.
In Appendix \ref{app} we prove in terms of lattices only, that certain quasi-permutation lattices are indeed quasi-permutation.

\section{Preliminaries on quasi-permutation groups and on character lattices}
\label{sec:1}

In this section we gather definitions and known results concerning quasi-permutation lattices,
quasi-invertible lattices and character lattices
that we need for the proofs of Theorems \ref{thm:product} and \ref{thm:product-non-closed}.
For details see \cite[Sections 2 and 10]{BKLR}  and \cite[Introduction]{LPR}.
\sms

\begin{subsec}
By a {\em lattice} we mean a pair $(\Gamma, L)$ where $\Gamma$ is a finite group acting on a finitely generated free abelian group $L$.
We say also that $L$ is a $\Gamma$-lattice.
A $\Gamma$-lattice $L$ is called  a {\em permutation} lattice if it
has a $\bbZ$-basis permuted by $\Gamma$.
Following Colliot-Th\'el\`ene and Sansuc \cite{CTS}, we say that two $\Gamma$-lattices $L$ and $L'$ are
{\em equivalent}, and write $L\sim L'$,
if there exist short exact sequences
\[
0\to L\to E\to P\to 0 \quad \quad {\rm and} \quad \quad
0\to L'\to E\to P'\to 0
\]
with the same $\Gamma$-lattice $E$, where $P$ and $P'$
are permutation $\Gamma$-lattices.
For a proof that this is indeed an equivalence relation
see \cite[Lemma 8, p.~182]{CTS} or \cite[Section 8]{Swan}.
Note that if there exists a short exact sequence of $\Gamma$-lattices
\[
0\to L\to L'\to Q\to 0
\]
where $Q$ is a permutation $\Gamma$-lattice, then,
taking in account the trivial short exact sequence
$$
0\to L'\to L'\to 0\to 0,
$$
we obtain that $L\sim L'$.
If $\Gamma$-lattices $L,L',M,M'$ satisfy $L\sim L'$ and $M\sim M'$,
then clearly $L\oplus M\sim L'\oplus M'$.
\end{subsec}

\begin{definition} \label{def.qp}
A $\Gamma$-lattice $L$ is called a {\em quasi-permutation} lattice
if there exists a short exact sequence
\begin{equation}\label{eq:quasi-perm}
0\to L\to P\to P'\to 0,
\end{equation}
where both $P$ and $P'$ are permutation $\Gamma$-lattices.
\end{definition}

\begin{lemma}[well-known]\label{lem:quasi-p}
A $\Gamma$-lattice $L$ is quasi-permutation if and only if $L\sim 0$.
\end{lemma}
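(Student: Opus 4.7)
The plan is simply to unpack the two definitions and see that they coincide almost tautologically, with one tiny observation about the trivial short exact sequence starting with $0$.

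For the ``only if'' direction, I would assume $L$ is quasi-permutation, so there exists a short exact sequence $0\to L\to P\to P'\to 0$ with $P,P'$ permutation. To verify $L\sim 0$ in the Colliot-Thélène--Sansuc sense, I take the common middle term to be $E:=P$ and exhibit the pair of short exact sequences $0\to L\to P\to P'\to 0$ (the given one) and $0\to 0\to P\to P\to 0$ (the trivial identity sequence). Both have permutation quotients, so by definition $L\sim 0$.

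For the ``if'' direction, I assume $L\sim 0$, which by definition gives short exact sequences $0\to L\to E\to Q\to 0$ and $0\to 0\to E\to Q'\to 0$ with common middle term $E$ and with $Q,Q'$ permutation $\Gamma$-lattices. The key observation — really the only nontrivial point in the argument — is that the second sequence forces $E\xrightarrow{\sim} Q'$, so $E$ itself is a permutation lattice. Substituting this into the first sequence, I obtain $0\to L\to E\to Q\to 0$ with both $E$ and $Q$ permutation, which is exactly Definition \ref{def.qp}. Hence $L$ is quasi-permutation.

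There is no real obstacle here; the lemma is essentially a restatement of the definition of $\sim$ specialized to the case $L'=0$. The only thing worth flagging explicitly in the writeup is that the ``trivial'' sequence $0\to 0\to E\to E\to 0$ (used one way) and the observation $E\cong Q'$ (used the other way) are what make the two formulations match. Everything else is formal.
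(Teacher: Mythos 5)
Your proof is correct and follows exactly the paper's own argument: the forward direction pairs the defining sequence $0\to L\to P\to P'\to 0$ with the trivial sequence $0\to 0\to P\to P\to 0$, and the converse uses the observation $E\cong P'$ to conclude $E$ is permutation. Nothing to add.
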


\begin{proof}
If $L$ is quasi-permutation, then sequence \eqref{eq:quasi-perm} together with the trivial short exact sequence
$$
0\to 0\to P\to P\to 0
$$
shows that $L\sim 0$.
Conversely, if $L\sim 0$, then there are short exact sequences
\[
0\to L\to E\to P\to 0 \quad \quad {\rm and} \quad \quad
0\to 0\to E\to P'\to 0,
\]
where $P$ and $P'$ are permutation lattices.
From the second exact sequence we have $E\cong P'$, hence $E$ is a permutation lattice,
and then the first exact sequence shows that $L$ is a quasi-permutation lattice.
\end{proof}

\begin{definition}
A $\Gamma$-lattice $L$ is called {\em quasi-invertible} if it is
a direct summand of a quasi-permutation $\Gamma$-lattice.
\end{definition}

Note that if a $\Gamma$-lattice $L$ is not quasi-invertible, then it is not quasi-permutation.

\begin{lemma}[well-known]
 If a $\Gamma$-lattice $L$ is quasi-permutation (resp., quasi-invertible) and $L'\sim L$, then
$L'$ is quasi-permutation (resp., quasi-invertible) as well.
\end{lemma}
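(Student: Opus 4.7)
The plan is to reduce both statements to the equivalence relation $\sim$ by exploiting Lemma \ref{lem:quasi-p} and the two formal properties of $\sim$ already recorded in the excerpt, namely that $\sim$ is an equivalence relation and that it is compatible with direct sums (if $L\sim L'$ and $M\sim M'$, then $L\oplus M\sim L'\oplus M'$). This reduces the proof to a short manipulation with no new construction of exact sequences required.

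For the quasi-permutation assertion, suppose $L$ is quasi-permutation. By Lemma \ref{lem:quasi-p}, $L\sim 0$. Since $\sim$ is an equivalence relation and $L'\sim L$, transitivity gives $L'\sim 0$. Applying Lemma \ref{lem:quasi-p} in the other direction, $L'$ is quasi-permutation.

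For the quasi-invertible assertion, suppose $L$ is quasi-invertible. By definition there exists a $\Gamma$-lattice $M$ such that $L\oplus M$ is quasi-permutation, i.e., $L\oplus M\sim 0$ by Lemma \ref{lem:quasi-p}. Combining $L'\sim L$ with the trivial equivalence $M\sim M$ and using the compatibility of $\sim$ with direct sums, I would conclude $L'\oplus M\sim L\oplus M\sim 0$. By Lemma \ref{lem:quasi-p} again, $L'\oplus M$ is quasi-permutation, so $L'$ is a direct summand of a quasi-permutation lattice, i.e., quasi-invertible. There is no real obstacle here; the only thing to be careful about is to invoke Lemma \ref{lem:quasi-p} in both directions and to cite explicitly the compatibility of $\sim$ with $\oplus$ that was noted immediately before Definition \ref{def.qp}.
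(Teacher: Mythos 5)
Your proof is correct and follows essentially the same route as the paper's own: both parts reduce to Lemma \ref{lem:quasi-p} via $L'\sim L\sim 0$, and the quasi-invertible case uses a complementary summand $M$ together with $L'\oplus M\sim L\oplus M\sim 0$. The only (harmless) difference is that you spell out the invocation of compatibility of $\sim$ with $\oplus$, which the paper uses implicitly.
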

\begin{proof}
If $L$ is quasi-permutation, then using Lemma \ref{lem:quasi-p} we see that $L'\sim L\sim 0$,
hence $L'$ is quasi-permutation.
If $L$ is quasi-invertible, then $L\oplus M$ is quasi-permutation for some $\Gamma$-lattice $M$, and
by Lemma \ref{lem:quasi-p} we have $L\oplus M\sim 0$.
We see that $L'\oplus M\sim L\oplus M\sim 0$, and
by Lemma \ref{lem:quasi-p} we obtain that
$L'\oplus M$ is quasi-permutation,
hence $L'$ is quasi-invertible.
\end{proof}

\sms

Let $\Z[\Gamma]$ denote the group ring of a finite group $\Gamma$.
We define the $\Gamma$-lattice $J_\Gamma$
by the exact sequence
$$
0\to\Z\labelto{N}\Z[\Gamma]\to J_\Gamma\to 0,
$$
where $N$ is the norm map, see \cite[before Lemma 10.4]{BKLR}.
We refer to \cite[Proposition 10.6]{BKLR} for
a proof of the following result, due to Voskresenski\u\i\
\cite[Corollary of Theorem 7]{Voskresenskii70}:

\begin{proposition}\label{prop:J-Gamma}
Let $\Gamma=\Z/p\Z\times \Z/p\Z$, where $p$ is a prime.
Then the $\Gamma$-lattice $J_\Gamma$ is not quasi-invertible.
\end{proposition}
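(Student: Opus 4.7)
My plan is to argue by contradiction, using a cohomological invariant that vanishes on every quasi-invertible $\Gamma$-lattice but is nonzero on $J_\Gamma$. The natural candidate is the Shafarevich-type subgroup
\[
\Sha^2_\omega(\Gamma,L)\;:=\;\Ker\!\Bigl(H^2(\Gamma,L)\longrightarrow\textstyle\prod_{C}H^2(C,L)\Bigr),
\]
where $C$ runs over the cyclic subgroups of $\Gamma$. This invariant is manifestly additive on direct sums, so the argument reduces to (a) showing $\Sha^2_\omega(\Gamma,J_\Gamma)\ne 0$, and (b) showing $\Sha^2_\omega(\Gamma,Q)=0$ whenever $Q$ is quasi-permutation.

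For (a), since $\Z[\Gamma]$ is a free $\Z[\Gamma']$-module for every subgroup $\Gamma'\subseteq\Gamma$, it is $\Gamma'$-cohomologically trivial, and the defining sequence $0\to\Z\to\Z[\Gamma]\to J_\Gamma\to 0$ yields canonical isomorphisms $H^i(\Gamma',J_\Gamma)\cong H^{i+1}(\Gamma',\Z)$ for all $i\ge 1$. A K\"unneth computation applied to $\Gamma=\Z/p\Z\times\Z/p\Z$ gives
$H^3(\Gamma,\Z)\cong\Tor_1^{\Z}\bigl(H^2(\Z/p\Z,\Z),H^2(\Z/p\Z,\Z)\bigr)\cong\Z/p\Z$,
whereas $H^3(C,\Z)=0$ for every cyclic $C$ (odd-degree trivial-coefficient cohomology of a cyclic group vanishes). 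Hence $H^2(\Gamma,J_\Gamma)\cong\Z/p\Z$ and all its restrictions to cyclic subgroups vanish, so $\Sha^2_\omega(\Gamma,J_\Gamma)\cong\Z/p\Z\ne 0$.

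For (b), I first verify the vanishing on a permutation summand $P=\Z[\Gamma/H]$: by Shapiro $H^2(\Gamma,P)\cong\Hom(H,\Q/\Z)$, and for $C=\langle h\rangle\subseteq H$ the Mackey decomposition of $P$ restricted to $C$ identifies the relevant summand of the restriction with the map $\chi\mapsto\chi|_C$; running over all $h\in H$ detects $\chi$ completely, forcing $\Sha^2_\omega(\Gamma,P)=0$. Next, given a quasi-permutation resolution $0\to Q\to P_1\to P_2\to 0$, the vanishing $H^1(\Gamma',P_2)=0$ for every subgroup $\Gamma'$ (again Shapiro, since $\Hom$ from a finite group to $\Z$ is trivial) gives an injection $H^2(\Gamma',Q)\hookrightarrow H^2(\Gamma',P_1)$, and a short diagram chase embeds $\Sha^2_\omega(\Gamma,Q)$ into $\Sha^2_\omega(\Gamma,P_1)=0$.

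Combining (a) and (b): were $J_\Gamma$ quasi-invertible, some $Q=J_\Gamma\oplus M$ would be quasi-permutation, forcing $\Sha^2_\omega(\Gamma,J_\Gamma)\hookrightarrow\Sha^2_\omega(\Gamma,Q)=0$, contradicting (a). The step I expect to need the most care is the Mackey/Shapiro identification of the restriction map $H^2(\Gamma,\Z[\Gamma/H])\to H^2(C,\Z[\Gamma/H])$ in (b); the rest is a single K\"unneth calculation plus routine long-exact-sequence bookkeeping.
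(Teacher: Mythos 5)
Your argument is correct, and it is essentially the same proof that the paper relies on: the paper does not prove Proposition \ref{prop:J-Gamma} in the text but cites \cite[Proposition 10.6]{BKLR} (after Voskresenski\u\i), where the same invariant --- the kernel $\Sha^2_\omega$ of restriction from $H^2(\Gamma,-)$ to all cyclic subgroups --- is used, with the same identification $\Sha^2_\omega(\Gamma,J_\Gamma)\cong H^3(\Gamma,\Z)\cong\Z/p\Z$ via the defining sequence of $J_\Gamma$, and the same vanishing of this invariant on permutation, quasi-permutation and hence quasi-invertible lattices. That this is the intended mechanism is confirmed by the paper's remark following the proof of Proposition \ref{prop:one-vector}, which records $\Sha^2(\Gamma,J_\Gamma)\cong\Z/2\Z$ in exactly this notation.
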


Note that if $\Gamma=\Z/2\Z\times \Z/2\Z$, then $\rk J_\Gamma=3$.
\sms

We shall use the following lemma from \cite{BKLR}:

\begin{lemma}[{\cite[Lemma 2.8]{BKLR}}]
\label{lem:2.8}
  Let $W_1,\dots,W_m$ be finite groups.
  For each $i=1,\dots,m$, let $V_i$ be a finite-dimensional
  $\Q$-representation of $W_i$. Set $V:=V_1\oplus\dots\oplus V_m$.
  Suppose $L\subset V$ is a free abelian subgroup, invariant under
  $W :=W_1\times\dots\times W_m$.
 If $L$ is a quasi-permutation $W$-lattice,
  then for each $i=1, \dots, m$
the intersection $L_i:=L\cap V_i$ is a quasi-permutation $W_i$-lattice.
  \end{lemma}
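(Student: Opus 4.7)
The plan is to exploit the short exact sequence of $W_i$-lattices
\[0 \to L_i \to L \to L/L_i \to 0\]
together with the equivalence relation $\sim$ from Section \ref{sec:1}, rather than constructing a quasi-permutation resolution of $L_i$ from scratch. The key structural observation is that $W_i$ acts trivially on $L/L_i$: since $V_i$ is a $\Q$-subspace of $V$, the sublattice $L_i = L \cap V_i$ is saturated in $L$, so $L/L_i$ is a torsion-free finitely generated abelian group that embeds into $V/V_i \cong \bigoplus_{j \neq i} V_j$; and by the product structure of the representation, $W_i$ acts trivially on $V_j$ for every $j \neq i$, hence on $V/V_i$ and on $L/L_i$.

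A finitely generated free abelian group with trivial $W_i$-action is automatically a permutation $W_i$-lattice (each basis vector forming a one-element orbit). Therefore $L/L_i$ is a permutation $W_i$-lattice, and the observation recalled in Section \ref{sec:1} --- that a short exact sequence $0 \to M \to M' \to Q \to 0$ with $Q$ permutation yields $M \sim M'$ --- gives $L_i \sim L$ as $W_i$-lattices.

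It remains to check that $L$ itself is quasi-permutation as a $W_i$-lattice. Any QP resolution $0 \to L \to P \to P' \to 0$ of $W$-lattices restricts to a sequence of $W_i$-lattices, and a permutation $W$-lattice remains permutation under restriction to $W_i \subset W$, because each $W$-orbit on a $\Z$-basis decomposes as a disjoint union of $W_i$-orbits. Hence $L \sim 0$ as $W_i$-lattices, so $L_i \sim L \sim 0$, which by Lemma \ref{lem:quasi-p} means that $L_i$ is quasi-permutation. The main (and essentially only) obstacle is to spot the structural fact that $W_i$ acts trivially on $L/L_i$; once that is recognized, the remainder is a formal manipulation of the relation $\sim$ with no induction or cohomology required.
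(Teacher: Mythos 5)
Your proof is correct, and there is nothing in this paper to compare it against: the lemma is imported verbatim from \cite[Lemma 2.8]{BKLR} and never proved here. Your route --- $L_i=L\cap V_i$ is saturated in $L$, so $L/L_i$ is a lattice on which $W_i$ acts trivially (hence permutation), giving $L_i\sim L$ as $W_i$-lattices, while restricting a quasi-permutation resolution from $W$ to $W_i$ gives $L\sim 0$, whence $L_i\sim 0$ --- is the natural argument and is essentially the one used in \cite{BKLR} itself.
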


We shall need the notion, due to \cite{LPR} and \cite{BKLR}, of the character lattice of a reductive $k$-group $G$
over a field $k$. Let $\kbar$ be a separable closure of $k$.
Let $T\subset G$ be a maximal torus (defined over $k$). Set $\Tbar=T\times_k \kbar$, $\Gbar=G\times_k \kbar$.
Let $\XX(\Tbar)$ denote the character group of $\Tbar:=T\times_k \kbar$.
Let $W=W(\Gbar,\Tbar):=\mathcal{N}_G(\Tbar)/\Tbar$ denote the Weyl group, it acts  on $\XX(\Tbar)$.
Consider the canonical Galois action on $\XX(\Tbar)$, it defines a homomorphism $\Gal(\kbar/k)\to\Aut\XX(\Tbar)$.
The image $\im\,\rho\subset \Aut\XX(\Tbar)$ normalizes $W$, hence $\im\, \rho\,\cdot W$ is a subgroup of $\Aut\XX(\Tbar)$.
By the character lattice of $G$ we mean the pair $\X(G):=(\im\, \rho\cdot W,\,\XX(\Tbar))$
(up to an isomorphism it does not depend on the choice of $T$).
In particular, if $k$ is algebraically closed, then $\X(G)=(W,\XX(T))$.
\sms

We shall reduce Theorem \ref{thm:product} to an assertion about quasi-permutation lattices using the following result due to \cite{LPR}:

\begin{proposition}[{\cite[Theorem 1.27]{LPR}, see also \cite[Theorem 1.3]{BKLR}}]
\label{prop:LPR,BKLR}
A reductive group $G$ over an algebraically closed field $k$ of characteristic $0$ is stably Cayley
if and only if its character lattice $\X(G)$ is quasi-permutation, i.e., $\XX(T)$ is a quasi-permutation $W(G,T)$-lattice.
\end{proposition}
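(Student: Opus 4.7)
The plan is to pass from the $G$-equivariant birational geometry of the pair $(G,\Lie G)$ to the $W$-equivariant birational geometry of the pair $(T,\lt)$, where $T\subset G$ is a maximal torus, $\lt=\Lie T$, and $W=W(G,T)$, and then to a purely lattice-theoretic assertion about $\XX(T)$.

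First, I would show that $G$ is Cayley if and only if there exists a $W$-equivariant birational isomorphism $T\birat\lt$. The key tool is a Chevalley-type restriction: over the regular semisimple locus, $G$ is $G$-birational to the induced variety $G\times^{\NN_G(T)}T^{\mathrm{reg}}$, and likewise $\Lie G$ is $G$-birational to $G\times^{\NN_G(T)}\lt^{\mathrm{reg}}$; consequently $G$-equivariant birational maps $G\birat\Lie G$ are in bijection with $W$-equivariant birational maps $T\birat\lt$. The ``stably'' qualifier then translates transparently: for a split torus $S=(\gm)^r$ the maximal torus of $G\times_k S$ is $T\times_k S$ with $W$ acting trivially on $S$, and $\Lie(G\times_k S)=\lt\oplus\Lie S$ with $W$ acting trivially on $\Lie S$. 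On character lattices this appends to $\XX(T)$ a free summand $\XX(S)\cong\bbZ^r$ with trivial $W$-action, which is a permutation $W$-lattice; so $G$ is stably Cayley iff $T\times(\gm)^r$ is $W$-birational to $\lt\oplus k^r$ for some $r\ge 0$.

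Second, I would invoke the $W$-equivariant form of Voskresenski\u\i's stable rationality theorem for tori: a $W$-torus $T'$ is stably $W$-equivariantly birational to a $W$-representation if and only if $\XX(T')$ is a quasi-permutation $W$-lattice. For the ``if'' direction, a resolution $0\to\XX(T)\to P\to P'\to 0$ as in Definition~\ref{def.qp} dualises to an exact sequence of $W$-tori $1\to Q'\to Q\to T\to 1$ with $Q$ and $Q'$ quasi-trivial (their character lattices are permutation), so that $Q$ is itself $W$-birational to a $W$-representation; the no-name lemma of Bogomolov then produces a $W$-birational equivalence between $T\times Q'$ and affine space with a linear $W$-action. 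For the ``only if'' direction, one verifies that the class of $\XX(T)$ under the Colliot-Th\'el\`ene--Sansuc relation $\sim$ is a $W$-equivariant birational invariant of $T$ (via Picard groups of smooth $W$-equivariant projective compactifications), whence stable $W$-birational equivalence with a vector space forces $\XX(T)\sim 0$, i.e., $\XX(T)$ is quasi-permutation by Lemma~\ref{lem:quasi-p}.

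The hard part is the second step, and specifically its ``only if'' direction: establishing that the $\sim$-class of $\XX(T)$ is a genuine $W$-birational invariant of the torus is the technical heart of Voskresenski\u\i's theory, requiring the analysis of Picard groups of equivariant compactifications. The constructive direction is more elementary once one has the no-name lemma. The first step is essentially formal and relies only on standard facts about reductive groups and the Chevalley restriction theorem in characteristic zero.
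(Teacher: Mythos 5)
The paper does not actually prove this proposition; it is imported verbatim from \cite[Theorem 1.27]{LPR} (see also \cite[Theorem 1.3]{BKLR}), so the only fair comparison is with the proof in that cited source. Your outline correctly reconstructs that proof's strategy: first the reduction of (stable) Cayley-ness of $G$ to $W$-equivariant (stable) birational equivalence between $T$ and $\lt$, via the induced-variety description of the regular semisimple loci, and then the $W$-equivariant form of Voskresenski\u\i's stable-rationality criterion for tori, whose difficult ``only if'' direction is exactly the Picard-group invariant of equivariant compactifications that you identify as the technical heart. Two standard points would still need to be written out in full: the stabilizer computation showing that a $G$-equivariant birational map of the induced varieties necessarily carries the fiber $T^{\mathrm{reg}}$ over the base point of $G/\NN_G(T)$ into the corresponding fiber $\lt^{\mathrm{reg}}$ (this is what makes your ``consequently'' in Step 1 true), and, at the end of your ``if'' direction, the no-name sandwich showing that any two faithful $W$-representations are stably $W$-birationally equivalent with trivially-acted added factors --- without it you have only matched $T$ stably with \emph{some} linear $W$-action, whereas the definition of stably Cayley requires $\lt\oplus\A^r$ specifically.
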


We shall use the following result due to Cortella and Kunyavski\u\i\ \cite{CK} and to Lemire, Popov and Reichstein \cite{LPR}.

\begin{proposition}[\cite{CK}, \cite{LPR}]\label{prop:CK,LPR}
Let $D$ be a connected Dynkin diagram.
Let $R=R(D)$ denote the corresponding root system,
$W=W(D)$ denote the Weyl group,
$Q=Q(D)$ denote the root lattice, and $P=P(D)$ denote the weight lattice.
We say that $L$ is an {\em intermediate lattice  between $Q$ and $P$}
if  $Q\subset L\subset P$ (note that $L=Q$ and $L=P$ are possible).
Then the following list gives (up to an isomorphism) all the pairs $(D,L)$ such that
$L$ is a quasi-permutation intermediate $W(D)$-lattice between $Q(D)$ and $P(D)$:
$$Q(\AA_n),\ Q(\BB_n),\  P(\CC_n),\ \X(\SO_{2n})\ \text{(then $D=\DD_n$)},$$
or $D$ is any connected Dynkin diagram of rank $1$ or $2$ (i.e. $\AA_1$, $\AA_2$, $\BB_2$, or $\GG_2$)  and
$L$ is any lattice between $Q(D)$ and $P(D)$, (i.e., either $L=P(D)$ or $L=Q(D)$).
\end{proposition}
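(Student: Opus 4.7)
The proposition is a complete classification, so its proof naturally splits into two halves: the listed lattices are all quasi-permutation, and every other intermediate lattice $L$ with $Q(D)\subseteq L\subseteq P(D)$ fails to be quasi-permutation (in fact, fails even to be quasi-invertible). I would handle the two directions separately.

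For the ``if'' direction, my plan is to exhibit, for each $(D,L)$ on the list, an explicit short exact sequence $0\to L\to P\to P'\to 0$ with $P$ and $P'$ permutation $W(D)$-lattices. For $Q(\AA_n)$ one has the tautological embedding into the standard permutation representation of $S_{n+1}=W(\AA_n)$ on $\Z^{n+1}$,
\[
0\to Q(\AA_n)\to \Z^{n+1}\to \Z\to 0,
\]
with $\Z$ carrying the trivial action. The three cases $Q(\BB_n)$, $P(\CC_n)$, and $\X(\SO_{2n})$ are all isomorphic, as abstract lattices, to $\Z^n$ equipped with a signed-permutation action of the appropriate Weyl subgroup of $(\Z/2\Z)^n\rtimes S_n$, so I would uniformly use
\[
0\to \Z^n\xrightarrow{e_i\mapsto u_i-v_i}\Z^{2n}\to \Z^n\to 0,
\]
where $\Z^{2n}$ has basis $\{u_i,v_i\}_{i=1}^n$ permuted by $W$ (sign flips swap $u_i\leftrightarrow v_i$, $S_n$ permutes indices) and the quotient $\Z^n$, spanned by the common images of the $u_i$ and $v_i$, carries the trivial sign action and the natural $S_n$-permutation. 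The rank-one and rank-two entries of the list can be dispatched by inspection, as their Weyl groups have order at most $12$; I would relegate these small, hands-on computations to an appendix.

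For the ``only if'' direction, the key tool is Proposition \ref{prop:J-Gamma}: the lattice $J_\Gamma$ is not quasi-invertible for $\Gamma\cong(\Z/p\Z)^2$. For each intermediate lattice $L$ not on the list, I would exhibit a subgroup $\Gamma\cong(\Z/p\Z)^2\subseteq W(D)$ for some prime $p$ such that $L\big|_\Gamma$ splits (up to the equivalence $\sim$) as $J_\Gamma\oplus M$ for some $\Gamma$-lattice $M$; since quasi-invertibility passes to direct summands, the non-quasi-invertibility of $J_\Gamma$ then forces $L$ itself to be neither quasi-invertible nor quasi-permutation. Concretely: for $\AA_{n-1}$-type intermediate lattices $L\neq Q(\AA_{n-1})$ with $n\geq 3$, I would use a $(\Z/p\Z)^2\subseteq S_n$ with $p$ a prime dividing the index $[L:Q]$; for $\CC_n$ $(n\geq 2)$ and $\BB_n$ $(n\geq 3)$ with $L$ not the listed lattice, and for $\DD_n$ $(n\geq 4)$ with $L$ different from $\X(\SO_{2n})$, I would use a suitably chosen $(\Z/2\Z)^2$ sitting inside the sign-change-plus-permutation subgroup; and for the exceptional types $F_4$, $E_6$, $E_7$, $E_8$ I would exploit Weyl subsystems of type $\AA_2\times \AA_2$ or $\DD_4$ in which a $(\Z/p\Z)^2$ with $p\in\{2,3\}$ acts nontrivially on $L$, producing the desired obstruction.

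The main obstacle, as I see it, lies entirely in the ``only if'' direction: for every excluded pair $(D,L)$ one must exhibit a concrete $\Gamma$ and then identify the $J_\Gamma$-summand of $L\big|_\Gamma$ up to permutation equivalence, which requires genuine case-by-case lattice computation and is most delicate inside the exceptional Weyl groups, where the combinatorics is least transparent. The ``if'' direction, by contrast, is a finite and essentially uniform exercise once the resolutions above are written down.
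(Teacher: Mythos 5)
Your ``if'' direction is essentially sound and coincides with what the paper does in Appendix~\ref{app}: the sequence $0\to Q(\AA_n)\to\Z^{n+1}\to\Z\to 0$, and the doubling trick $0\to\Z^n\to\Z^{2n}\to\Z^n\to 0$ for the signed-permutation lattices $Q(\BB_n)$, $P(\CC_n)$, $\X(\SO_{2n})$ (this is exactly the argument behind the citation of \cite[\S\,2.8]{Lorenz}). Be aware, though, that ``dispatched by inspection'' hides the one genuinely nontrivial positive case: the hexagonal lattice, i.e.\ $P(\AA_2)$ as an $\SS_3$-lattice and $Q(\GG_2)=P(\GG_2)$ under $W(\GG_2)$. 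Neither of your two uniform constructions covers it, and the paper needs a real computation there (the outer-$\gPGL_3$ resolution by permutation lattices of ranks $2n+1$ and $n+2$, plus the reduction to maximal finite subgroups of $\gGL_2(\Z)$ in Lemma~\ref{lem:Voskresenskii}).

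The genuine gap is in your ``only if'' direction, and it is concentrated in type $\AA$. (Note that the paper does not reprove this half at all: it cites \cite[Theorem~0.1]{CK} for $L=Q$ or $L=P$ and \cite[Propositions~5.1, 5.2]{LPR} for $Q\subsetneq L\subsetneq P$; you are attempting strictly more than the paper, which is fine, but then the attempt must actually work.) Take $L=P(\AA_4)$, $W=\SS_5$, $[L:Q]=5$. Your prescription asks for $(\Z/5\Z)^2\subset\SS_5$, which does not exist ($|\SS_5|=120$ has $5$-Sylow $\Z/5\Z$). Worse, \emph{no} choice of prime can rescue the $(\Z/p\Z)^2$-plus-$J_\Gamma$ strategy here: for any subgroup $\Gamma\subset\SS_5$ of order coprime to $5$, Roiter's lemma applied to $0\to Q\to P\to\Z/5\Z\to 0$ and $0\to\Z\labelto{5}\Z\to\Z/5\Z\to 0$ (exactly as in the paper's proof of Lemma~\ref{lem:equivalent}) gives $P\oplus\Z\cong Q\oplus\Z$, hence $P(\AA_4)\sim Q(\AA_4)$ as $\Gamma$-lattices; since $Q(\AA_4)$ is quasi-permutation, the restriction of $P(\AA_4)$ to every such $\Gamma$ is quasi-permutation and carries no obstruction whatsoever. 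So the failure of $P(\AA_4)$ can only be detected by subgroups of order divisible by $5$, and the only elementary abelian ones are cyclic of order $5$, about which Proposition~\ref{prop:J-Gamma} says nothing (for cyclic groups the $J_\Gamma$/$\Sha^2$ mechanism is vacuous). These cases require Endo--Miyata-type results on integral representations of cyclic groups, which is precisely the content hidden in the proofs of \cite[Propositions~7.4 and 7.8]{LPR} that the paper invokes in Lemma~\ref{lem:p-p^2}; the same obstacle recurs for $P(\AA_{p-1})$ for every prime $p\ge 5$, and more generally whenever the relevant prime satisfies $2p>n$. As it stands, your program therefore cannot close the hardest family of excluded lattices.
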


\begin{proof}
The positive result (the assertion that the lattices in the list are
indeed quasi-permutation) follows from the assertion that the
corresponding groups are stably Cayley (or that their generic tori
are stably rational), see the references in \cite{CK}, Section 3.
See Appendix \ref{app} below for a proof of this positive result in
terms of lattices only. The difficult part  of Proposition
\ref{prop:CK,LPR} is the negative result (the assertion that all the
other lattices are not quasi-permutation). This was proved in
\cite[Theorem 0.1]{CK} in the cases when $L=Q$ or $L=P$, and in
\cite[Propositions 5.1 and 5.2]{LPR} in the cases when $Q\subsetneq
L\subsetneq P$ (this can happen only when $D=\AA_n$ or $D=\DD_n$).
\end{proof}

\begin{remark}
It follows from Proposition \ref{prop:CK,LPR} that, in particular,
the following lattices are quasi-permutation: $Q(\AA_1),\ P(\AA_1),\
P(\AA_2),\ P(\BB_2),\ Q(\CC_2)$, $Q(\GG_2)=P(\GG_2)$,
$Q(\DD_3)=Q(\AA_3),\ \X(\SL_4/\mu_2)=\X(\SO_6)$.
\end{remark}

\section{A family of non-quasi-permutation lattices}
\label{sec:2}

In this section we construct a family of non-quasi-permutation (even non-quasi-invertible) lattices.

\begin{subsec}\label{subsec:one-vector}
We consider a Dynkin diagram $D\cupdot\Delta$ (disjoint union).
We assume that $D=\bigcupdot_{i\in I} D_i$ (a finite disjoint union),
where each $D_i$ is of type $\BB_{l_i}\ (l_i\ge 1)$ or $\DD_{l_i}\ (l_i\ge 2)$ (and where $\BB_1=\AA_1$,
$\BB_2=\CC_2$, $\DD_2=\AA_1\cupdot \AA_1$, and $\DD_3=\AA_3$ are permitted).
We denote by $m$ the cardinality of the finite index set $I$.
We assume that $\Delta=\bigcupdot_{\iota=1}^\mu \Delta_\iota$ (disjoint union),
where  $\Delta_\iota$ is of type $\AA_{2n_\iota-1}$, $n_\iota\ge 2$ ($\AA_3=\DD_3$ is permitted).
We assume that $m\ge 1$ and $\mu\ge 0$ (in the case $\mu=0$ the diagram $\Delta$ is empty).
\sms

For each $i\in I$   we realize the root system $R(D_i)$ of type $\BB_{l_i}$ or $\DD_{l_i}$
in the standard way in the space $V_i:=\R^{l_i}$ with basis $(e_s)_{s\in S_i}$
where $S_i$ is an index set consisting of $l_i$ elements;
cf. \cite[Planche II]{Bourbaki} for $\BB_l$ $(l\ge 2)$ (the relevant formulas for $\BB_1$  are similar)
and \cite[Planche IV]{Bourbaki} for $\DD_l$ $(l\ge 3)$ (again, the relevant formulas for $\DD_2$ are similar).
Let $M_i\subset V_i$ denote the lattice generated by the basis vectors $(e_s)_{s\in S_i}$.
Let $P_i\supset M_i$ denote the weight lattice of the root system $D_i$.
Set $S=\bigcupdot_i S_i$ (disjoint union).
Consider the vector space $V=\bigoplus_i V_i$ with basis $(e_s)_{s\in S}$.
Let $M_D\subset V$ denote the lattice generated by the basis vectors $(e_s)_{s\in S}$, then $M_D=\bigoplus_i M_i$.
Set $P_D=\bigoplus_i P_i$.
\sms

For each $\iota=1,\dots,\mu$ we realize the root system $R(\Delta_\iota)$ of type $\AA_{2n_\iota-1}$
in the standard way in  the subspace $V_\iota$ of vectors with zero sum of the coordinates
in  the space $\R^{2n_\iota}$ with basis
$\ve_{\iota,1},\dots, \ve_{\iota,2n_\iota}$; cf. \cite[Planche I]{Bourbaki}.
Let $Q_\iota$ be the root lattice of $R(\Delta_\iota)$ with basis
$\ve_{\iota,1}-\ve_{\iota,2},\ \ve_{\iota,2}-\ve_{\iota,3},\ \dots, \ \ve_{\iota,2n_\iota-1}-\ve_{\iota,2n_\iota}$,
and let $P_\iota\supset Q_\iota$ be the weight lattice of $R(\Delta_\iota)$.
Set $Q_\Delta=\bigoplus_\iota Q_\iota$,
 $P_\Delta=\bigoplus_\iota P_\iota$.
\sms

Set
$$
W:= \prod_{i\in I} W(D_i)\times\prod_{\iota=1}^\mu W(\Delta_\iota),
\quad L'=M_D\oplus Q_\Delta=\bigoplus_{i\in I} M_i \oplus
\bigoplus_{\iota=1}^\mu Q_\iota,
$$
then $W$ acts on $L'$ and on $L'\otimes_\Z \R $.
For each $i$ consider the vector
$$
x_i=\sum_{s\in S_i} e_s\in  M_i,
$$
then $\half x_i\in P_i$.
For each $\iota$ consider the vector
$$
\xi_\iota=\ve_{\iota,1}-\ve_{\iota,2}+\ve_{\iota,3}-\ve_{\iota,4}+\dots+\ve_{\iota,2n_\iota-1}-\ve_{\iota,2n_\iota}\in  Q_\iota,
$$
then $\half\xi_\iota\in P_\iota$; see \cite[Planche I]{Bourbaki}.
Write
$$
\xi'_\iota=\ve_{\iota,1}-\ve_{\iota,2},\quad  \xi''_\iota=\ve_{\iota,3}-\ve_{\iota,4}+\dots+\ve_{\iota,2n_\iota-1}-\ve_{\iota,2n_\iota},
$$
then $\xi_\iota=\xi'_\iota+\xi''_\iota$.
Consider the vector
$$
v=\ \half\sum_{i\in I} x_i+\half\sum_{\iota=1}^\mu \xi_{\iota}\ =\ \half\sum_{s\in S}e_s+\half\sum_{\iota=1}^\mu \xi_{\iota}\in P_D\oplus P_\Delta.
$$
Set
\begin{equation}\label{eq:L'v}
 L=\langle L', v\rangle,
\end{equation}
then $[L:L']=2$ because $v\in\half L'\smallsetminus L'$.
Note that the sublattice $L\subset  P_D\oplus P_\Delta$ is $W$-invariant.
Indeed, the group $W$ acts on $(P_D\oplus P_\Delta)/(M_D\oplus Q_\Delta)$ trivially.
\end{subsec}

\begin{proposition}\label{prop:one-vector}
We assume that $m\ge 1$, $m+\mu \ge 2$.
If $\mu=0$, we assume that  not all of $D_i$ are of types $\BB_1$ or $\DD_2$.
Then the $W$-lattice $L$ as in \eqref{eq:L'v} is not quasi-invertible, hence not quasi-permutation.
\end{proposition}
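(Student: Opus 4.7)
The plan is to reduce the non-quasi-invertibility of $L$ to that of Voskresenski\u{\i}'s lattice $J_\Gamma$ via Proposition~\ref{prop:J-Gamma}. Specifically, I would choose a subgroup $\Gamma \cong \Z/2\Z \times \Z/2\Z$ of $W$ and establish an equivalence $L|_\Gamma \sim J_\Gamma$ of $\Gamma$-lattices. Since the relation $\sim$ preserves quasi-invertibility (by the lemma after Definition~\ref{def.qp}) and $J_\Gamma$ is not quasi-invertible, $L|_\Gamma$ is not quasi-invertible either. As a permutation $W$-lattice restricts to a permutation $\Gamma$-lattice, quasi-invertibility over $W$ descends to quasi-invertibility over~$\Gamma$; hence $L$ cannot be quasi-invertible as a $W$-lattice.

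The first step is the choice of $\Gamma$. I would construct two commuting involutions $\sigma_1,\sigma_2 \in W$, each ``supported on one component'' of $D \cupdot \Delta$. For $\Delta_\iota$ of type $\AA_{2n-1}$ take the product of transpositions $(1\,2)(3\,4)\cdots(2n-1,\,2n) \in S_{2n} = W(\Delta_\iota)$, which sends $\xi_\iota \mapsto -\xi_\iota$. For $D_i$ of type $\BB_{l_i}$, or of type $\DD_{l_i}$ with $l_i$ even, take $-\id \in W(D_i)$, which sends $x_i \mapsto -x_i$. For $D_i$ of type $\DD_{l_i}$ with $l_i \ge 3$ odd, take the sign change of two coordinates (an element of $W(\DD_{l_i})$, being an even sign change), which sends $x_i$ to $x_i - 2(e_{s_1}+e_{s_2})$ and still shifts $v$ by an element of $L'$. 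The hypotheses $m\ge 1$, $m+\mu\ge 2$, and (when $\mu=0$) ``not all $D_i$ of type $\BB_1$ or $\DD_2$'' guarantee that two such involutions, supported on \emph{distinct} components, can be chosen. Set $\Gamma := \langle \sigma_1,\sigma_2\rangle$.

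The second and main step is to establish the equivalence $L|_\Gamma \sim J_\Gamma$. Starting from the short exact sequence $0 \to L' \to L \to \Z/2\Z \to 0$ (with trivial $\Gamma$-action on the quotient, since $W$ acts trivially on $L/L'$), together with the decomposition $L' = \bigoplus_i M_i \oplus \bigoplus_\iota Q_\iota$ as a direct sum of quasi-permutation $W$-lattices (each summand quasi-permutation by Proposition~\ref{prop:CK,LPR}), and the explicit $\Gamma$-orbit of the half-sum $v$, I would construct a $\Gamma$-lattice $E$ and two short exact sequences
\[
0 \to L|_\Gamma \to E \to P \to 0, \qquad 0 \to J_\Gamma \to E \to P' \to 0,
\]
where $P,P'$ are permutation $\Gamma$-lattices. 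The vectors in $L'$ beyond the characteristic vectors $x_i$ and $\xi_\iota$ (the other basis vectors of $M_i$, the other simple roots of $Q_\iota$) contribute only permutation pieces, which get absorbed into $P$ and~$P'$.

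The central difficulty is this second step. It requires a careful case analysis depending on the types of components carrying $\sigma_1,\sigma_2$ (two $\Delta$-components, two $D$-components, or one of each) and on their internal structure ($\BB_l$ versus $\DD_l$, $l$ odd versus $l$ even, and the treatment of the additional basis vectors of each $M_i$ or simple roots of each $Q_\iota$). The hypothesis excluding all $D_i$ of type $\BB_1$ or $\DD_2$ when $\mu = 0$ is precisely what prevents the constructed lattice $E$ from collapsing to a permutation lattice; without it, one checks directly that $L$ itself is already quasi-permutation. The argument generalizes the lattice computations of \cite[Section~9]{BKLR} and \cite[Proposition~5.2]{LPR}.
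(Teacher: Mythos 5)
Your overall reduction scheme (restrict to a Klein four\-/subgroup $\Gamma\subset W$, compare with $J_\Gamma$, and invoke Proposition \ref{prop:J-Gamma} together with the facts that restriction and equivalence preserve quasi-invertibility) is indeed the paper's scheme, but your choice of $\Gamma$ in the first step is wrong, and it makes the key claim of your second step false. In the paper the three involutions are \emph{not} supported on single components: after partitioning $S$ into three cells $U_1,U_2,U_3$, the element $j(\gamma_\vk)$ negates $e_s$ for \emph{every} $s\in S\smallsetminus U_\vk$ and acts by prescribed permutations on \emph{every} $\AA$-factor, so each generator moves essentially all components at once. This is exactly what forces $v+v_1+v_2+v_3=0$, so that the orbit of $v$ spans a rank-3 copy of $J_\Gamma$. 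With your ``one component per involution'' choice, $v-\sigma_1v$ and $v-\sigma_2v$ lie inside the two chosen components; consequently the orbit of $v$ spans a lattice which either has rank $2$ (when there are exactly two components) or contains the nonzero $\Gamma$-fixed vector coming from the untouched components. It is not $J_\Gamma$, and, worse, the whole restriction $L|_\Gamma$ turns out to be quasi-permutation, so \emph{no} argument can extract non-quasi-invertibility from it: your claimed equivalence $L|_\Gamma\sim J_\Gamma$ is simply false. (A symptom of this: the hypothesis that not all $D_i$ are of type $\BB_1$ or $\DD_2$ plays no role anywhere in your construction.)

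Here is a concrete counterexample. Take $\mu=0$, $m=3$, $D_1=D_2=\BB_1$, $D_3=\BB_2$, so $L'=\Z e_1\oplus\Z e_2\oplus(\Z e_3\oplus\Z e_4)$, $v=\half(e_1+e_2+e_3+e_4)$, $L=\langle L',v\rangle$; the hypotheses of Proposition \ref{prop:one-vector} hold. Your recipe takes $\sigma_1=-\id$ on $M_1$ and $\sigma_2=-\id$ on $M_2$ (any other pair of components behaves identically). Put $t=e_3+e_4$, a $\Gamma$-fixed vector, and $L_0=\langle e_1,e_2,t,v\rangle$, so that $2v=e_1+e_2+t$ and one checks $L=\Z e_3\oplus L_0$ with trivial action on $\Z e_3$. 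Now let $P=\Z[\Gamma/\langle\sigma_2\rangle]\oplus\Z[\Gamma/\langle\sigma_1\rangle]\oplus\Z$ with bases $x_1,y_1$ (swapped by $\sigma_1$, fixed by $\sigma_2$), $x_2,y_2$ (swapped by $\sigma_2$, fixed by $\sigma_1$) and $z$ (fixed). The $\Gamma$-equivariant map $e_1\mapsto x_1-y_1$, $e_2\mapsto x_2-y_2$, $v\mapsto x_1+x_2+z$ (hence $t\mapsto x_1+y_1+x_2+y_2+2z$) is injective, and in the quotient the only relations are $y_1=x_1$, $y_2=x_2$, $z=-x_1-x_2$, so the cokernel is $\Z^2$ with trivial action. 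Thus there is an exact sequence
\begin{equation*}
0\to L_0\to \Z[\Gamma/\langle\sigma_2\rangle]\oplus\Z[\Gamma/\langle\sigma_1\rangle]\oplus\Z\to\Z^2\to 0
\end{equation*}
of $\Gamma$-lattices with permutation middle and right terms, so $L|_\Gamma=L_0\oplus\Z$ is quasi-permutation, hence quasi-invertible, hence not equivalent to $J_\Gamma$. (Similarly, in the two-component case $\BB_2\cupdot\BB_1$ your choice gives $L|_\Gamma\cong\Z e_1\oplus\langle e_1+e_2,v\rangle$, which is quasi-permutation by the rank-$2$ Lemma \ref{lem:Voskresenskii}.) The repair is precisely the paper's construction: the involutions must negate all of $S$ \emph{outside} one cell of a tripartition $U_1\cupdot U_2\cupdot U_3$ of $S$ (with the parity constraint \eqref{align:vk} on $\DD$-components, whence Lemma \ref{lem.partitions} and the type hypothesis), rather than act inside single components.
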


\begin{proof}
We consider a group $\Gamma=\{e,\gamma_1,\gamma_2,\gamma_3\}$ of order 4, where $\gamma_1,\gamma_2,\gamma_3$ are of order 2.
The idea of our proof is to construct an embedding
\begin{equation*} \label{e.gamma}
j\colon\Gamma\to W
\end{equation*}
in such a way that $L$, viewed as a $\Gamma$-lattice, is equivalent
to its $\Gamma$-sublattice $L_1$, and $L_1$ is isomorphic to a
direct sum of a $\Gamma$-sublattice $L_0 \simeq J_{\Gamma}$ of rank
3 and a number of $\Gamma$-lattices of rank $1$. Since by
Proposition \ref{prop:J-Gamma} $J_\Gamma$ is not quasi-invertible,
this will imply that $L_1$ and $L$ are not quasi-invertible
$\Gamma$-lattices, and hence $L$ is not quasi-invertible as a
$W$-lattice. We shall now fill in the details of this argument in
four steps.
\sms

\noindent
  {\em Step 1. }
We begin by partitioning each $S_i$ for $i\in I$ into three (non-over\-lapping)
subsets $S_{i, 1}$, $S_{i, 2}$ and $S_{i, 3}$,
subject to the requirement that
\begin{equation} \label{align:vk}
|S_{i, 1}| \equiv
|S_{i, 2}| \equiv
|S_{i, 3}|
\equiv l_i \text{ (mod ${2}$) if  $D_i$ is of type
$\DD_{l_i}$.}
\end{equation}
We then set $U_1$ to be the union of the $S_{i, 1}$,
$U_2$ to be the union of the $S_{i, 2}$,
and $U_3$ to be the union of the $S_{i, 3}$,
as $i$ runs over $I$.

\begin{lemma} \label{lem.partitions}
\begin{itemize}
\item[\rm(i)] If $\mu\ge 1$, the subsets $S_{i, 1}$, $S_{i, 2}$ and $S_{i,
3}$ of $S_i$ can be chosen, subject to~\eqref{align:vk}, so that
$U_1\neq\emptyset$.
\item[\rm(ii)] If $\mu=0$ (and $m\ge 2$), the
subsets $S_{i, 1}$, $S_{i, 2}$ and $S_{i, 3}$ of $S_i$ can be
chosen, subject to~\eqref{align:vk}, so that $U_1, U_2, U_3 \neq
\emptyset$.
\end{itemize}
\end{lemma}

To prove the lemma, first consider case (i). For all $i$ such that
$D_i$ is of type $\DD_{l_i}$ with {\em odd} $l_i$,  we  partition
$S_i$ into three non-empty subsets of odd cardinalities. For all the
other $i$ we take $S_{i,1}=S_i$, $S_{i,2}=S_{i,3}=\emptyset$. Then
$U_1\neq\emptyset$ (note that $m\ge 1$) and  \eqref{align:vk} is
satisfied. \sms

In case (ii), if one of the $D_i$ is of type $\DD_{l_i}$ where $l_i
\ge 3$ is {\em odd,} then we  partition $S_i$ for each such  $D_i$
into three non-empty subsets of odd cardinalities. We partition all
the other $S_{i}$  as follows:
\begin{equation} \label{e.Si}
  \text{$S_{i, 1} = S_{i, 2} = \emptyset$ and $S_{i, 3} = S_i$.}
\end{equation}
Clearly $U_1, U_2, U_3 \neq \emptyset$ and  \eqref{align:vk} is satisfied.
\sms

If there is no $D_i$ of type $\DD_{l_i}$ with  odd $l_i \ge 3$, but
one of the $D_i$, say for $i=i_0$, is $\DD_l$ with {\em even} $l \ge
4$,   then we partition $S_{i_0}$ into two non-empty subsets
$S_{i_0,1}$ and $S_{i_0,2}$ of even cardinalities, and set
$S_{i_0,3}=\emptyset$. We partition the sets $S_i$  for $i\neq i_0$
as in~\eqref{e.Si} (note that by our assumption $m\ge 2$). Once
again, $U_1, U_2, U_3 \neq \emptyset$ and  \eqref{align:vk} is
satisfied. \sms

If there is no $D_i$ of type $\DD_{l_i}$ with $l_i \ge 3$ (odd or even),
but one of the $D_i$, say for $i=i_0$, is of type $\BB_l$ with $l\ge 2$,
we partition $S_{i_0}$ into two non-empty subsets  $S_{i_0,1}$ and $S_{i_0,2}$, and set $S_{i_0,3}=\emptyset$.
We partition the sets $S_i$ for $i\neq i_0$ as in~\eqref{e.Si}  (again, note that $m\ge 2$).
Once again, $U_1, U_2, U_3 \neq \emptyset$ and  \eqref{align:vk} is satisfied.
\sms

Since by our assumption not all of $D_i$ are of type $\BB_1$ or $\DD_2$,
we have exhausted all the cases.
This completes the proof of Lemma~\ref{lem.partitions}.
\qed
\sms

\noindent
{\em Step 2.}
We continue proving Proposition \ref{prop:one-vector}.
We construct an embedding $\Gamma\into W$.
\sms

For $s\in S$ we denote by $c_s$  the automorphism  of $L$ taking the basis vector $e_s$ to $-e_s$ and fixing all the other basis vectors.
For $\iota=1,\dots,\mu$  we define
$\tau_\iota^{(12)}=\Transp((\iota,1),(\iota,2))\in W_\iota$ (the transposition of the basis vectors $\ve_{\iota,1}$ and $\ve_{\iota,2}$).
Set
$$\tau_\iota^{>2}=\Transp((\iota,3),(\iota,4))\cdot\ \cdots\ \cdot\Transp((\iota,2n_\iota-1),(\iota,2n_\iota))\in W_\iota.
$$

Write $\Gamma=\{e,\gamma_1,\gamma_2,\gamma_3\}$ and define an embedding $j\colon \Gamma\into W$ as follows:
\begin{align*}
j(\gamma_1)&=\prod_{s\in S\smallsetminus U_1} c_s\ \cdot\ \prod_{\iota=1}^\mu \tau_\iota^{(12)}\tau_\iota^{>2}\,;\\
j(\gamma_2)&=\prod_{s\in S\smallsetminus U_2} c_s\ \cdot\ \prod_{\iota=1}^\mu \tau_\iota^{(12)}\,;\\
j(\gamma_3)&=\prod_{s\in S\smallsetminus U_3} c_s\ \cdot\ \prod_{\iota=1}^\mu \tau_\iota^{>2}\,.
\end{align*}

Note that if $D_i$ is of type $\DD_{l_i}$, then  by \eqref{align:vk}  for $\vk=1,2,3$ the cardinality
$\#(S_i\smallsetminus S_{i,\vk})$ is even,
hence the product of $c_s$ over $s\in S_i\smallsetminus S_{i,\vk}$ is contained in $ W(D_i)$ for all such $i$, and therefore,
 $j(\gamma_\vk)\in W$.
Since
$j(\gamma_1)$, $j(\gamma_2)$ and $j(\gamma_3)$ commute, are of order 2,
and $j(\gamma_1) j(\gamma_2)=j(\gamma_3)$,
we see that $j$ is a homomorphism.
If $\mu\ge 1$, then, since  $2n_1\ge 4$,  clearly $j(\gamma_\vk)\neq 1$ for $\vk=1,2,3$, hence $j$ is an embedding.
If $\mu=0$, then the sets $S\smallsetminus U_1$, $S\smallsetminus U_2$ and $S\smallsetminus U_3$ are nonempty,
and again  $j(\gamma_\vk)\neq 1$ for $\vk=1,2,3$, hence $j$ is an embedding.
\sms

\noindent
{\em Step 3.}  We construct a $\Gamma$-sublattice $L_0$ of rank 3.
Write a vector  $\xx\in L$ as
$$
\xx=\sum_{s\in S} b_s e_s + \sum_{\iota=1}^\mu\sum_{\nu=1}^{2n_\iota}\beta_{\iota,\nu}\ve_{\iota,\nu},
$$
where $b_s\,,\beta_{\iota,\nu}\in \half\Z$.
 Set $n'=\sum_{\iota=1}^\mu (n_\iota-1)$.
Define a $\Gamma$-equivariant homomorphism
$$
\phi\colon L\to \Z^{n'},\quad \xx\mapsto (\beta_{\iota,2\lambda-1}+\beta_{\iota,2\lambda})_{\iota=1,\dots,\mu,\ \lambda=2,\dots,n_\iota}
$$
(we skip $\lambda=1$).
We obtain a short exact sequence of $\Gamma$-lattices
$$
0\to L_1\to L\labelto{\phi} \Z^{n'}\to 0,
$$
where $L_1:=\ker\phi$.
Since $\Gamma$ acts trivially on $\Z^{n'}$,
we have $L_1\sim L$.
Therefore, it suffices to show that $L_1$ is not quasi-invertible.
\sms

Recall that
$$
v=\ \half\sum_{s\in S}e_s+\half\sum_{\iota=1}^\mu \xi_{\iota}.
$$
Set $v_1=\gamma_1\cdot v,\ v_2=\gamma_2\cdot v,\ v_3=\gamma_3\cdot v.$
Set
$$
L_0=\langle v, v_1,v_2,v_3\rangle.
$$
 We have
$$
v_1=\ \half\sum_{s\in U_1}e_s  -\half\sum_{s\in U_2\cup U_3}e_s    -\half\sum_{\iota=1}^\mu \xi_{\iota},
$$
whence
\begin{equation}\label{e:sum1}
v+v_1=\sum_{s\in U_1}e_s.
\end{equation}
We have
$$
v_2=\ \half\sum_{s\in U_2}e_s  -\half\sum_{s\in U_1\cup U_3}e_s   +\half\sum_{\iota=1}^\mu (-\xi'_{\iota} + \xi''_{\iota}),
$$
whence
\begin{equation}\label{e:sum2}
v+v_2=\sum_{s\in U_2}e_s + \sum_{\iota=1}^\mu \xi''_{\iota} .
\end{equation}
We have
$$
v_3=\ \half\sum_{s\in U_3}e_s  -\half\sum_{s\in U_1\cup U_2}e_s   + \half\sum_{\iota=1}^\mu (\xi'_{\iota} - \xi''_{\iota}),
$$
whence
\begin{equation}\label{e:sum3}
v+v_3=\sum_{s\in U_3}e_s + \sum_{\iota=1}^\mu \xi'_{\iota}.
\end{equation}

Clearly, we have
$$
v+v_1+v_2+v_3=0.
$$
Since the set $\{v,v_1,v_2,v_{3}\}$ is the orbit of $v$ under $\Gamma$,
the sublattice $L_0=\langle v, v_1,v_2,v_3\rangle\subset L$ is $\Gamma$-invariant.
If $\mu\ge 1$, then $U_1\neq\emptyset$, and we see from \eqref{e:sum1}, \eqref{e:sum2} and \eqref{e:sum3} that $\rk L_0 \ge 3$.
If $\mu=0$, then $U_1,U_2,U_3\neq\emptyset$, and again we see from \eqref{e:sum1}, \eqref{e:sum2} and \eqref{e:sum3} that $\rk L_0 \ge 3$.
Thus $\rk L_0=3$ and
$L_0\simeq J_\Gamma$, whence   by Proposition \ref{prop:J-Gamma} $L_0$ is not quasi-invertible.
\sms

\noindent
{\em Step 4.}
We show that  $L_0$ is a direct summand of $L_1$. Set $m'=|S|$.
\sms

First assume that $\mu\ge 1$.
Choose $u_1\in U_1\subset S$. Set $S'=S\smallsetminus \{u_1\}$.
For each $s\in S'$ (i.e., $s\neq u_1$) consider the one-dimensional (i.e., of rank 1) lattice $X_s=\langle e_s\rangle$.
We obtain $m'-1$ $\Gamma$-invariant one-dimensional sublattices of $L_1$.
\sms

Denote by $\Upsilon$ the set of pairs $(\iota,\lambda)$ such that $1\le\iota\le\mu$,
  $1\le \lambda\le n_\iota$, and if $\iota=1$, then $\lambda\neq 1,2$.
For each $(\iota,\lambda)\in\Upsilon$ consider the one-dimensional lattice
$$
\Xi_{\iota,\lambda}=\langle \ve_{\iota,2\lambda-1}-\ve_{\iota,2\lambda}\rangle.
$$
We obtain $-2+\sum_{\iota=1}^\mu n_\iota$  one-dimensional $\Gamma$-invariant  sublattices of $L_1$.
\sms

We show that
\begin{equation}\label{e:d-sum}
L_1=L_0\oplus \bigoplus_{s\in S'} X_s \oplus \bigoplus_{(\iota,\lambda)\in\Upsilon}\Xi_{\iota,\lambda}.
\end{equation}
Set $L'_1=\langle L_0, (X_s)_{s\neq u_1}, (\Xi_{\iota,\lambda})_{(\iota,\lambda)\in\Upsilon}\rangle$, then
\begin{equation}\label{eq:dim-count1}
\rk L'_1\leq 3+(m'-1)-2+\sum_{\iota=1}^\mu n_\iota=
m'+\sum_{\iota=1}^\mu (2n_\iota-1)-\sum_{\iota=1}^\mu (n_\iota-1)=\rk L_1.
\end{equation}
Therefore, it suffices to check that $L'_1\supset L_1$.
The set
$$\{v\} \cup  \{e_s\ |\ s\in S\}\cup\{\ve_{\iota,2\lambda-1}-\ve_{\iota,2\lambda}\ |\ 1\le\iota\le\mu,1\le \lambda\le n_\iota\}$$
is a set of generators of $L_1$.
By construction $v,v_1,v_2,v_3\in L_0\subset L'_1$.
We have  $e_s\in X_s\subset L'_1$ for $s\neq u_1$.
By \eqref{e:sum1} $\sum_{s\in U_1}e_s\in L'_1$, hence $e_{u_1}\in L'_1$.
By construction
$$
\ve_{\iota,2\lambda-1}-\ve_{\iota,2\lambda}\in L'_1,\  \text{ for all } (\iota,\lambda)\neq (1,1),(1,2) \,.
$$
From \eqref{e:sum3} and \eqref{e:sum2} we see that
$$
\sum_{\iota=1}^\mu (\ve_{\iota,1}-\ve_{\iota,2})\in L'_1,\ \ \sum_{\iota=1}^\mu \xi''_\iota\in L'_1\,.
$$
Thus
$$
\ve_{1,1}-\ve_{1,2}\in L'_1,\ \ \ve_{1,3}-\ve_{1,4}\in L'_1\,.
$$
We conclude that $L'_1\supset L_1$, hence $L_1=L'_1$.
From dimension count \eqref{eq:dim-count1} we see that \eqref{e:d-sum} holds.
\sms

Now assume that $\mu=0$. Then for each $\vk=1,2,3$ we choose an element $u_\vk\in U_\vk$ and set $U'_\vk=U_\vk\smallsetminus\{u_\vk\}$.
We set $S'=U'_1\cup U'_2\cup U'_3=S\smallsetminus\{u_1,u_2,u_3\}$.
Again for $s\in S'$ (i.e., $s\neq u_1,u_2,u_3$) consider the one-dimensional lattice $X_s=\langle e_s\rangle$.
We obtain $m'-3$  one-dimensional $\Gamma$-invariant  sublattices of $L_1=L$.
We show that
\begin{equation}\label{e:e-sum}
L_1=L_0\oplus \bigoplus_{s\in S'} X_s\,.
\end{equation}
Set $L'_1=\langle L_0, (X_s)_{s\in S'} \rangle$, then
\begin{equation}\label{eq:dim-count2}
\rk L'_1\leq 3+m'-3=m'=\rk L_1.
\end{equation}
Therefore, it suffices to check that $L'_1\supset L_1$.
The set $\{v\} \cup \{e_s\ |\ s\in S\}$ is a set of generators of $L_1=L$.
By construction
$v,v_1,v_2,v_3\in L_1'$ and $e_s\in L_1'$ for $s\neq u_1,u_2,u_3$.
We see from \eqref{e:sum1}, \eqref{e:sum2}, \eqref{e:sum3} that
$e_s\in L_1'$ also for $s=u_1,u_2,u_3$.
Thus $L'_1\supset L_1$, hence $L'_1= L_1$.
From  dimension count  \eqref{eq:dim-count2} we see that \eqref{e:e-sum} holds.
\sms

We see that in both cases $\mu\ge 1$ and $\mu=0$, the sublattice $L_0$ is a direct summand of $L_1$.
Since by Proposition \ref{prop:J-Gamma} $L_0$ is not quasi-invertible as a $\Gamma$-lattice,
it follows that $L_1$ and $L$ are not quasi-invertible as $\Gamma$-lattices.
Thus $L$ is not quasi-invertible as a $W$-lattice.
This completes the proof of Proposition \ref{prop:one-vector}.
\end{proof}

\begin{remark}
Since $\Sha^2(\Gamma,J_\Gamma)\cong\Z/2\Z$ (Voskresenski\u\i, see \cite[Section 10]{BKLR} for the notation and the result),
our argument shows that $\Sha^2(\Gamma,L)\cong\Z/2\Z$.
\end{remark}

\begin{remark}\label{rem:mistake}
The proof of \cite[Lemma 12.3]{BKLR} (which is a version with
$\mu=0$ of Lemma \ref{lem.partitions} above) contains an inaccuracy,
though the lemma as stated is correct. Namely, in \cite{BKLR} we
write that if there exists $i$ such that $\Delta_i$ is of type
$\DD_{l_i}$ where $l_i \ge 3$ is {odd,} then  we  partition  $S_i$
for {\em one} such $i$ into three non-empty subsets $S_{i,1}$,
$S_{i,2}$ and $S_{i,3}$ of odd cardinalities, and we partition all the
other $S_i$ as in \cite[(12.4)]{BKLR}. However, this partitioning of
the sets $S_i$ into three subsets does not satisfy
\cite[(12.3)]{BKLR} for {\em other}  $i$ such that $\Delta_i$ is of
type $\DD_{l_i}$ with {odd} $l_i$. This inaccuracy can be easily
corrected: we should partition  $S_i$ for {\em each} $i$ such that
$\Delta_i$ is of type $\DD_{l_i}$ with {odd} $l_i$ into three
non-empty subsets of odd cardinalities.
\end{remark}

\section{More non-quasi-permutation lattices}
\label{sec:3}

In this section we construct another family of non-quasi-permutation lattices.

\begin{subsec}\label{subsec:qp-An-nu}
For $i=1,\dots,r$ let $Q_i=\Z \AA_{n_i-1}$ and $P_i=\Lambda_{n_i}$ denote the root lattice and the weight lattice of $\SL_{n_i}$, respectively,
and let $W_i=\mathfrak{S}_{n_i}$ denote the corresponding Weyl group (the symmetric group on $n_i$ letters) acting on $P_i$ and $Q_i$.
Set $F_i=P_i/Q_i$, then $W_i$ acts trivially on $F_i$.
Set
$$
Q=\bigoplus_{i=1}^r Q_i,\quad P=\bigoplus_{i=1}^r P_i,\quad W=\prod_{i=1}^r W_i,
$$
then $Q\subset P$ and the Weyl group $W$ acts on $Q$ and $P$.
Set
$$F=P/Q=\bigoplus_{i=1}^r F_i,$$
then $W$ acts trivially on $F$.
\sms

We regard $Q_i=\Z \AA_{n_i-1}$ and $P_i=\Lambda_{n_i}$ as the lattices described in  Bourbaki \cite[Planche I]{Bourbaki}.
Then we have an isomorphism $F_i\cong \Z/n_i\Z$.
Note that for each $1\le i\le r$, the set $\{\alpha_{\vk,i}\ |\ 1\le\vk\le n_i-1\}$ is a $\Z$-basis of $Q_i$.
\sms

Set $c=\gcd(n_1,\dots,n_r)$; we assume that $c>1$.
Let $d>1$ be a divisor of $c$.
For each $i=1,\dots,r$, let $\nu_i\in \Z$ be such that $1\le \nu_i<d$, $\gcd(\nu_i,d)=1$, and assume that $\nu_1=1$.
We write $\nub=(\nu_i)_{i=1}^r\in\Z^r$.
Let $\overline{\nub}$ denote the image of $\nub$ in  $(\Z/d\Z)^r$.
Let $S_\nub\subset (\Z/d\Z)^r\subset\bigoplus_{i=1}^r \Z/n_i\Z=F$ denote the cyclic subgroup of order $d$ generated by $\overline{\nub}$.
Let $L_\nub$ denote the preimage of $S_\nub\subset F$ in $P$ under the canonical epimorphism $P\onto F$, then $Q\subset L_\nub\subset P$.
\end{subsec}

\begin{proposition}\label{prop:latticeL}
Let $W$ and the $W$-lattice $L_\nub$ be as in Subsection \ref{subsec:qp-An-nu}.
In the case $d=2^s$ we assume that $\sum n_i>4$.
Then  $L_\nub$  is not quasi-permutation.
\end{proposition}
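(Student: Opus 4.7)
The plan follows the template of Proposition~\ref{prop:one-vector}: locate a subgroup $\Gamma\subset W$ and a $\Gamma$-sublattice of $L_\nub$ whose non-quasi-invertibility is detected via Proposition~\ref{prop:J-Gamma}, thereby concluding that $L_\nub$ is not quasi-invertible (hence not quasi-permutation) as a $W$-lattice.

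The case $r=1$ can be disposed of immediately. There $L_\nub$ is an intermediate $\SS_{n_1}$-lattice strictly between $Q(\AA_{n_1-1})$ and $P(\AA_{n_1-1})$, and Proposition~\ref{prop:CK,LPR} applies. The only intermediate quasi-permutation lattice in $\AA_{n}$ for $n\ge 3$ is $\X(\SO_6)$ in $\AA_3$, which arises precisely from $n_1=4$, $d=2$; this case is excluded by the hypothesis $\sum n_i>4$ when $d=2^s$.

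For $r\ge 2$ I would fix a prime $p\mid d$, so that $p\mid n_i$ for every $i$, and embed $\Gamma=\Z/p\Z\times\Z/p\Z\hookrightarrow W_1\times W_2\subset W$ by sending the first generator to a product of $n_1/p$ disjoint $p$-cycles in $\SS_{n_1}$ and the second to a product of $n_2/p$ disjoint $p$-cycles in $\SS_{n_2}$. Take the lift $v=\sum_i\nu_i\,\varpi_{i,1}\in L_\nub$ of $\overline{\nub}\in F$, where $\varpi_{i,1}$ is the first fundamental weight of $\AA_{n_i-1}$, and set $L_0=\Z[\Gamma]\,v\subset L_\nub$. A direct computation shows that $z:=\sum_{\gamma\in\Gamma}\gamma v$ is a $\Gamma$-fixed vector lying in the root sublattice $Q$, so $\Z z$ is a permutation $\Gamma$-summand of $L_0$. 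After extracting this summand and absorbing further permutation pieces carved out of the root lattices $Q_1,\ldots,Q_r$, one shows that $L_\nub$ is $\Gamma$-equivalent to $J_\Gamma$ plus permutation pieces; Proposition~\ref{prop:J-Gamma} then finishes the argument.

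The main obstacle I anticipate is the Colliot-Th\'el\`ene--Sansuc equivalence identification in the $r\ge 2$ case: the cyclic module $L_0\cong\Z[\Gamma]/\mathrm{Ann}(v)$ is in general a twist of $J_\Gamma$ by a sign character rather than $J_\Gamma$ itself, and the construction of the complementary one-dimensional $\Gamma$-summands inside the $Q_i$ must be carried out with care to produce the desired direct-sum decomposition; the divisibility pattern $p\mid n_i$ enters into this analysis sensitively and demands a case split analogous to the one in Lemma~\ref{lem.partitions}. The hypothesis $\sum n_i>4$ when $d=2^s$ is needed precisely to rule out the genuine degeneration $r=2$, $n_1=n_2=p=2$, in which $L_\nub$ is the quasi-permutation lattice $\X(\SO_4)=\X((\SL_2\times\SL_2)/\mu_2^{\mathrm{diag}})$ and the $J_\Gamma$-obstruction fails to materialize.
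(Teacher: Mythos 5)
Your $r\ge 2$ strategy has a genuine gap, and it is not repairable by more careful bookkeeping: the subgroup $\Gamma\cong\Z/p\Z\times\Z/p\Z$ you choose (products of $n_1/p$ and $n_2/p$ disjoint $p$-cycles in $\SS_{n_1}$ and $\SS_{n_2}$) simply does not carry the obstruction in general. Concretely, take $r=2$, $n_1=2$, $n_2=4$, $d=p=2$, $\nub=(1,1)$; this instance satisfies all hypotheses of the proposition ($c=2>1$, $\sum n_i=6>4$). Write $Q_1=\Z a$ and $Q_2=\langle\beta_1,\beta_2,\beta_3\rangle$; then $L_\nub=\langle Q_1\oplus Q_2,\,u\rangle$ with $u=\half(a+\beta_1+\beta_3)$, and $(u,\beta_1,\beta_2,\beta_3)$ is a $\Z$-basis of $L_\nub$. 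Your generators are $\gamma_1$ (the transposition in $\SS_2$: $a\mapsto -a$) and $\gamma_2=(12)(34)\in\SS_4$ (so $\beta_1\mapsto-\beta_1$, $\beta_3\mapsto-\beta_3$, $\beta_2\mapsto\beta_1+\beta_2+\beta_3$). The sublattice $L_1=\langle u,\beta_1,\beta_3\rangle=\langle u,a\rangle\oplus\Z\beta_1$ (note $a=2u-\beta_1-\beta_3$, $\gamma_1u=u-a$, $\gamma_2u=a-u$) is $\Gamma$-stable and saturated, and $L_\nub/L_1\cong\Z$ with trivial $\Gamma$-action, generated by the class of $\beta_2$. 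Hence $L_\nub\sim L_1$ as $\Gamma$-lattices, and $L_1$ is a direct sum of lattices of rank $2$ and $1$, which are quasi-permutation by Lemma \ref{lem:Voskresenskii}. So $L_\nub$ restricted to your $\Gamma$ \emph{is} quasi-permutation; in particular it cannot be equivalent to $J_\Gamma$ plus permutation pieces (that would contradict Proposition \ref{prop:J-Gamma}), and no argument confined to this subgroup can prove the proposition. Two intermediate claims also fail as stated: the correct lift of $\overline{\nub}$ is $v=\sum_i\nu_i(n_i/d)\varpi_{i,1}$, not $\sum_i\nu_i\varpi_{i,1}$ (the latter generally does not lie in $L_\nub$); and $z=\sum_{\gamma\in\Gamma}\gamma v$ need not span a permutation direct summand of $\Z[\Gamma]v$ --- it can be non-primitive, and it vanishes outright when $n_1=n_2=d=p=3$, where $\sum_{a}\sigma^a\varpi_{i,1}=0$.

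The paper's proof is structured precisely to avoid this trap, and is genuinely different from your plan. It first reduces $L_\nub$ to $L_\one$ (Lemma \ref{lem:equivalent}) via Roiter's version of Schanuel's lemma, valid for any elementary abelian $p$-subgroup of $W$ with $p\mid d$; it then identifies $\varphi(L_\one)$ with $N=\Lambda_n(d)\cap\varphi(V)$ inside the weight lattice of the single large root system $\AA_{n-1}$, $n=\sum n_i$ (Lemma \ref{lem:varphi}), shows $N\sim\Lambda_n(d)$ because the quotient is a trivial lattice $\Z^{r-1}$, and finally quotes the LPR results for $\Lambda_n(d)$ with the much larger subgroup $\Gamma=(\Z/p\Z)^{n/p}\subset(\SS_p)^{n/p}\subset W$ --- each generator a \emph{single} $p$-cycle, not a product of $n_i/p$ of them (Lemma \ref{lem:p-p^2}). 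It is this passage to the big ambient $\AA_{n-1}$, where the cross terms between different factors become available, that produces the obstruction; your $\Gamma$ acts ``factor by factor'' and, as the example shows, sees nothing. (Compare also the proof of Proposition \ref{prop:one-vector}, where the order-four group must mix sign changes $c_s$ with the transpositions $\tau_\iota^{(12)},\tau_\iota^{>2}$.) Finally, your $r=1$ step has a hole at $n_1=3$, $d=3$: there $L_\nub=P(\AA_2)$ is the character lattice of $\SL_3$ and is quasi-permutation, a case Proposition \ref{prop:CK,LPR} cannot rule out and which the hypothesis ``$\sum n_i>4$ when $d=2^s$'' does not exclude; in the paper the proposition is only ever applied with $r\ge 2$ and all $n_i\ge 3$, so this boundary case must be treated as outside the intended scope rather than silently absorbed.
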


This proposition follows from Lemmas \ref{lem:equivalent} and \ref{lem:not-qp} below.

\begin{lemma}\label{lem:equivalent}
Let $p| d$ be a prime.
Then for any subgroup $\Gamma\subset W$ isomorphic to $(\Z/p\Z)^m$ for some natural $m$,
the $\Gamma$-lattices $L_\nub$ and $L_\one:=L_{(1,\dots,1)}$ are equivalent
for any $\nub=(\nu_1,\dots,\nu_r)$ as above (in particular, we assume that $\nu_1=1$).
\end{lemma}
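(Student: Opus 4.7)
The plan is to prove $L_\nub\sim L_\one$ by first reducing to a single-coordinate change and then using an equivariant rescaling combined with a free resolution of a cohomologically trivial finite module.

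First, by transitivity of $\sim$ and induction on the number of indices $i\ge 2$ with $\nu_i\neq 1$, it suffices to assume that $\nub=(1,\dots,1,\nu,1,\dots,1)$ differs from $\one$ only in the $j$-th coordinate, with $\nu=\nu_j$ satisfying $\gcd(\nu,d)=1$.

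Next, using the Chinese Remainder Theorem, I choose $\mu\in\Z$ with $\mu\equiv\nu\pmod d$ and $\gcd(\mu,n_j)=1$, and define the $\Z$-linear map $\Psi\colon P\to P$ acting as multiplication by $\mu$ on $P_j$ and as the identity on the remaining summands. Then $\Psi$ is $W$-equivariant (hence $\Gamma$-equivariant) and injective. The congruence $\mu\equiv\nu\pmod d$ ensures $\Psi(L_\one)\subset L_\nub$, and $\Psi$ restricts to a $\Gamma$-lattice isomorphism $L_\one\isoto\Psi(L_\one)$. The index $[L_\nub:\Psi(L_\one)]=|\mu|^{n_j-1}$ is coprime to $p$ because $\gcd(\mu,n_j)=1$ and $p\mid n_j$, so the finite $\Gamma$-module $C:=L_\nub/\Psi(L_\one)$ has order coprime to $|\Gamma|=p^m$ and is therefore cohomologically trivial.

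To conclude $\Psi(L_\one)\sim L_\nub$, I combine Rim's theorem (a cohomologically trivial finite $\Gamma$-module has projective dimension at most $1$ over $\Z[\Gamma]$) with Swan's theorem (for a $p$-group $\Gamma$, every finitely generated projective $\Z[\Gamma]$-module is stably free) to obtain a \emph{free} $\Z[\Gamma]$-resolution $0\to F_1\to F_0\to C\to 0$. Forming the fibered product $E:=L_\nub\times_C F_0$ yields two short exact sequences of $\Gamma$-lattices
\[
0\to\Psi(L_\one)\to E\to F_0\to 0,\qquad 0\to F_1\to E\to L_\nub\to 0.
\]
Since $F_0=\Z[\Gamma]^M$ is a permutation $\Gamma$-lattice, the first sequence gives $\Psi(L_\one)\sim E$. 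The second sequence splits: writing $F_1=\Z[\Gamma]^N$ and applying Shapiro's lemma gives $\Ext^1_\Gamma(L_\nub,\Z[\Gamma])=\Ext^1_\Z(L_\nub,\Z)=0$ since $L_\nub$ is $\Z$-free, so $E\cong F_1\oplus L_\nub$. Since $F_1$ is permutation, $E\sim L_\nub$. Combining, $L_\one\cong\Psi(L_\one)\sim E\sim L_\nub$. The main subtlety is the passage from Rim's projective resolution to a \emph{free} resolution via Swan's theorem; this upgrade is essential for the splitting of the second short exact sequence via the vanishing of $\Ext^1_\Gamma(L_\nub,F_1)$.
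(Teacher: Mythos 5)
Your map $\Psi$ is a one-coordinate-at-a-time version of the paper's multiplication map $\T_\nub$, and the first half of your argument is sound: the inclusion $\Psi(L_\one)\subset L_\nub$, the index computation, and the cohomological triviality of $C=L_\nub/\Psi(L_\one)$ are all correct, as is the pullback formalism and the Eckmann--Shapiro splitting. The proof breaks at the step where you upgrade Rim's projective resolution to a \emph{free} one. The statement you attribute to Swan --- that every finitely generated projective $\Z[\Gamma]$-module is stably free when $\Gamma$ is a $p$-group --- is false: by Rim's theorem itself, $\widetilde{K}_0(\Z[\Z/p\Z])\cong\operatorname{Cl}(\Z[\zeta_p])$, which is nonzero already for $p=23$, so non-stably-free projectives exist even over cyclic $p$-groups. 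What your argument actually needs is that the specific class $[F_1]\in\widetilde{K}_0(\Z[\Gamma])$ of the kernel of $F_0\to C$ vanishes. For finite modules $C$ of order prime to $|\Gamma|$ these Euler classes are exactly the Swan classes, and the Swan subgroup $T(\Z[\Gamma])$ is known to be nontrivial for non-cyclic elementary abelian $p$-groups (Ullom computed it to be nonzero for $(\Z/p\Z)^2$ when $p\ge 5$). Since the lemma must hold for $\Gamma=(\Z/p\Z)^{n/p}$ of large rank --- this is how it is used in Lemma \ref{lem:not-qp} --- there is no general principle to appeal to here. Retreating to a merely projective $F_1$ does not help either: the splitting still works (as $\Ext^1_\Gamma(L_\nub,F_1)$ is a summand of $\Ext^1_\Gamma(L_\nub,\Z[\Gamma]^N)=0$), but then your two sequences only give $L_\one\sim L_\nub\oplus F_1$, and cancelling $F_1$ requires knowing $F_1\sim 0$, i.e., that this projective lattice is quasi-permutation --- which is the same obstruction in disguise.

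The paper circumvents the $K$-theoretic issue entirely. It uses the same sublattice $\T_\nub L_\one\subset L_\nub$ of index prime to $p$ and the same finite cokernel $M_\nub\cong Q/\T_\nub Q$ (Lemma \ref{lem:quotient}), but instead of resolving $M_\nub$ by free modules it observes that $M_\nub$ is also the cokernel of the injection $\T_\nub\colon Q\to Q$, whose outer terms are the \emph{quasi-permutation} lattice $Q$. Roiter's version of Schanuel's lemma applies to the two surjections $L_\nub\to M_\nub$ and $Q\to M_\nub$ (they are projective morphisms in Roiter's sense precisely because $|\Gamma|\cdot M_\nub=M_\nub$, which is your coprimality observation) and yields an isomorphism of $\Gamma$-lattices $L_\nub\oplus Q\cong \T_\nub L_\one\oplus Q$; the quasi-permutation summand $Q$ is then cancelled by Lemma \ref{lem:qp-eq}, giving $L_\nub\sim\T_\nub L_\one\cong L_\one$. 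In short, the role you want a free module to play is played by $Q$ itself, for which quasi-permutation --- not freeness, not even projectivity --- suffices. To repair your write-up, replace the free resolution of $C$ by the resolution $0\to Q\to Q\to C\to 0$ induced by $\Psi$ and invoke Roiter's lemma in place of the pullback-plus-splitting step.
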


Note that this lemma is trivial when $d=2$.

\begin{subsec}
We compute the lattice $L_\nub$ explicitly. First let $r=1$. We have
$Q=Q_1$, $P=P_1$. Then $P_1$ is generated by $Q_1$ and an element
$\omega\in P_1$ whose image in $P_1/Q_1$ is of order $n_1$. We may
take
$$
\omega=\frac{1}{n_1}[(n_1-1)\alpha_1+(n_1-2)\alpha_2+\dots +2\alpha_{n_1-2}+\alpha_{n_1-1}],
$$
where $\alpha_1,\dots,\alpha_{n_1-1}$ are the simple roots, see \cite[Planche I]{Bourbaki}.
There exists exactly one intermediate lattice $L$ between $Q_1$ and $P_1$ such that $[L:Q_1]=d$, and it is generated by $Q_1$ and the element
$$
w=\frac{n_1}{d}\omega=\frac{1}{d}[(n_1-1)\alpha_1+(n_1-2)\alpha_2+\dots +2\alpha_{n_1-2}+\alpha_{n_1-1}].
$$
Now for any natural $r$, the lattice $L_\nub$ is generated by $Q$ and the element
$$
w_\nub=\frac{1}{d}\sum_{i=1}^r\nu_i[(n_i-1)\alpha_{1,i}+(n_i-2)\alpha_{2,i}+\dots +2\alpha_{n_i-2,i}+\alpha_{n_i-1,i}].
$$
In particular, $L_\one$ is generated by $Q$ and
$$
w_\one=\frac{1}{d}\sum_{i=1}^r[(n_i-1)\alpha_{1,i}+(n_i-2)\alpha_{2,i}+\dots +2\alpha_{n_i-2,i}+\alpha_{n_i-1,i}].
$$
\end{subsec}

\begin{subsec} {\em Proof of Lemma \ref{lem:equivalent}.}
Recall that $L_\nub=\langle Q, w_\nub\rangle$ with
$$
Q=\langle\alpha_{\vk,i}\rangle, \quad\text{where}\quad  i=1,\dots, r,\ \vk=1,\dots,n_i-1.
$$
Set $Q_\nub=\langle\nu_i\alpha_{\vk,i}\rangle$.
Denote by $\T_\nub$ the endomorphism of $Q$ that acts on $Q_i$ by multiplication by $\nu_i$.
We have $Q_\one=Q,\ Q_\nub=\T_\nub Q_\one,\ w_\nub=\T_\nub w_\one$. Consider
$$\T_\nub L_\one=\langle Q_\nub,w_\nub\rangle.$$
Clearly the $W$-lattices $L_\one$ and $\T_\nub L_\one$ are
isomorphic. We have an embedding of $W$-lattices $Q\into L_\nub$, in
particular, an embedding $Q\into L_\one$, which induces an embedding
$\T_\nub Q\into \T_\nub L_\one$. Set $M_\nub=L_\nub/{\T_\nub
L_\one}$, then we obtain a homomorphism of $W$-modules $Q/\T_\nub
Q\to M_\nub$, which is an isomorphism  by Lemma \ref{lem:quotient}
below. \sms

Now let $p|d$ be a prime. Let $\Gamma\subset W$ be a subgroup
isomorphic to $(\Z/p\Z)^m$ for some natural $m$.
As in \cite[Proof of Proposition 2.10]{LPR},
we use Roiter's version  \cite[Proposition 2]{Roiter} of Schanuel's lemma.
We have exact sequences of $\Gamma$-modules
\begin{gather*}
0\to {\T_\nub L_\one}\to L_\nub\to M_\nub\to 0,\\
0\to Q\labelto{\T_\nub} Q\to M_\nub\to 0.
\end{gather*}
Since all $\nu_i$ are prime to $p$, we have $|\Gamma|\cdot M_\nub=p^m M_\nub=M_\nub$,
and by \cite[Corollary of Proposition 3]{Roiter}
the morphisms of $\Z[\Gamma]$-modules  $L_\nub\to M_\nub$ and  $Q\to M_\nub$
are projective in the sense of \cite[\S\,1]{Roiter}.
Now by \cite[Proposition 2]{Roiter}
there exists an isomorphism of $\Gamma$-lattices $L_\nub\oplus Q\simeq
{\T_\nub L_\one}\oplus Q$. Since $Q$ is a quasi-permutation $W$-lattice, it is
a quasi-permutation $\Gamma$-lattice, and by Lemma \ref{lem:qp-eq}
below, $L_\nub\sim {\T_\nub L_\one}$ as $\Gamma$-lattices. Since ${\T_\nub L_\one}\simeq
L_\one$, we conclude that $L_\nub\sim L_\one$.
\qed
\end{subsec}

\begin{lemma}\label{lem:quotient}
With the above notation
$L_\nub/{\T_\nub L_\one}\simeq Q/\T_\nub Q=\bigoplus_{i=2}^r Q_i/\nu_i Q_i$.
\end{lemma}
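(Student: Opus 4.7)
The plan is to establish the claimed isomorphism by showing that the natural composition $Q \hookrightarrow L_\nub \twoheadrightarrow L_\nub/\T_\nub L_\one$ is surjective with kernel exactly $\T_\nub Q$. The explicit formulas for $w_\one$ and $w_\nub$ in the preceding subsection will do all the work; nothing deeper is needed.

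First I would verify the inclusion $\T_\nub L_\one \subset L_\nub$: indeed $L_\one = \langle Q, w_\one\rangle$, so $\T_\nub L_\one = \langle \T_\nub Q, \T_\nub w_\one\rangle = \langle \T_\nub Q, w_\nub\rangle$, and both $\T_\nub Q \subset Q \subset L_\nub$ and $w_\nub \in L_\nub$. Next, since $L_\nub = Q + \Z\cdot w_\nub$ and $w_\nub = \T_\nub w_\one \in \T_\nub L_\one$, the image of $Q$ already fills $L_\nub/\T_\nub L_\one$, giving surjectivity.

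For the kernel, suppose $x \in Q$ lies in $\T_\nub L_\one$, so $x = \T_\nub q + k\,w_\nub$ for some $q\in Q$, $k\in\Z$. Since $\T_\nub q\in Q$ and $x\in Q$, we need $k\,w_\nub\in Q$. The image of $w_\nub$ in $P/Q$ is the generator $\overline{\nub}$ of the order-$d$ subgroup $S_\nub$, so $k\,w_\nub\in Q$ forces $d\mid k$. Now the explicit expression
\[
d\,w_\nub = \sum_{i=1}^r \nu_i\bigl[(n_i-1)\alpha_{1,i}+(n_i-2)\alpha_{2,i}+\dots+\alpha_{n_i-1,i}\bigr]
\]
shows each summand lies in $\nu_i Q_i$, hence $d\,w_\nub \in \bigoplus_i \nu_i Q_i = \T_\nub Q$. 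Therefore $k\,w_\nub \in \T_\nub Q$, whence $x\in\T_\nub Q$. Conversely $\T_\nub Q\subset \T_\nub L_\one$ trivially, so the kernel equals $\T_\nub Q$.

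This yields the isomorphism $L_\nub/\T_\nub L_\one \simeq Q/\T_\nub Q$. Since $\T_\nub$ acts on $Q_i$ by multiplication by $\nu_i$ and on $Q$ diagonally, $Q/\T_\nub Q = \bigoplus_{i=1}^r Q_i/\nu_i Q_i$; the normalization $\nu_1=1$ kills the $i=1$ summand, leaving $\bigoplus_{i=2}^r Q_i/\nu_i Q_i$, as claimed. There is no real obstacle here: the only thing to verify carefully is the arithmetic step $d w_\nub\in\T_\nub Q$, and that is immediate from the displayed formula for $w_\nub$.
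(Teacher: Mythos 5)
Your proof is correct, and it takes a genuinely different route from the paper's. The paper argues via explicit compatible bases: since $\nu_1=1$, the generator $\alpha_{n_1-1,1}$ occurs with coefficient $1$ in the expansion of $dw_\nub$ as an integral combination of the elements $\nu_i\alpha_{\vk,i}$, so it can be deleted from the generating sets of both $\T_\nub L_\one$ and $L_\nub$; a rank count shows the resulting sets $B'_\nub$ and $B_\nub$ are bases, and they match term by term ($\alpha_{1,1},\dots,\alpha_{n_1-2,1}$ and $w_\nub$ common to both, $\nu_i\alpha_{\vk,i}$ versus $\alpha_{\vk,i}$ for $i\ge 2$), from which the quotient is read off. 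You instead compute image and kernel of the natural map $Q\to L_\nub/\T_\nub L_\one$: surjectivity because $w_\nub=\T_\nub w_\one\in\T_\nub L_\one$, and $Q\cap\T_\nub L_\one=\T_\nub Q$ because any relation $x=\T_\nub q+kw_\nub$ with $x\in Q$ forces $kw_\nub\in Q$, hence $d\mid k$ (the image of $w_\nub$ in $P/Q$ generates $S_\nub$, which has order exactly $d$), while $dw_\nub\in\bigoplus_i\nu_iQ_i=\T_\nub Q$ by the displayed formula. Both arguments hinge on the same formula for $dw_\nub$, but they use it differently: the paper needs $\nu_1=1$ already at this stage (to get the coefficient $1$), whereas your kernel computation does not use $\nu_1=1$ at all and only invokes it in the final identification $Q/\T_\nub Q=\bigoplus_{i=2}^r Q_i/\nu_iQ_i$, so your version is slightly more general and more economical; the paper's version, in exchange, produces explicit bases of both lattices. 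One small point worth making explicit in your write-up: the isomorphism you obtain is induced by the $W$-equivariant inclusion $Q\into L_\nub$ between $W$-stable sublattices, hence it is an isomorphism of $W$-modules, which is what the application in Lemma \ref{lem:equivalent} requires.
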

\begin{proof}
We have ${\T_\nub L_\one}=\langle S_\nub\rangle$, where $S_\nub=\{\nu_i\alpha_{\vk,i}\}_{i,\vk}\cup\{w_\nub\}$.
Note that
$$
dw_\nub=\sum_{i=1}^r \nu_i[(n_i-1)\alpha_{1,i}+(n_i-2)\alpha_{2,i}+\dots +2\alpha_{n_i-2,i}+\alpha_{n_i-1,i}].
$$
We see that $dw_\nub$ is a linear combination with integer coefficients  of $\nu_i\alpha_{\vk,i}$
and that $\alpha_{n_1-1,1}$ appears in this linear combination with coefficient 1
(because $\nu_1=1$).
Set $B'_\nub=S_\nub\smallsetminus\{\alpha_{n_1-1,1}\}$, then $\langle B'_\nub\rangle\ni \alpha_{n_1-1,1}$,
hence $\langle B'_\nub\rangle=\langle S_\nub\rangle={\T_\nub L_\one}$,
thus $B'_\nub$ is a basis of ${\T_\nub L_\one}$.
Similarly, the set $B_\nub:=\{\alpha_{\vk,i}\}_{i,\vk}\cup\{w_\nub\}\smallsetminus \{\alpha_{n_1-1,1}\}$ is a basis of $L_\nub$.
Both bases $B_\nub$ and $B'_\nub$ contain $\alpha_{1,1},\dots,\alpha_{n_1-2,1}$ and $w_\nub$.
For all $i=2,\dots,r$ and all $\vk=1,\dots,n_i-1$, the basis $B_\nub$ contains $\alpha_{\vk,i}$,
while $B'_\nub$ contains  $\nu_i\alpha_{\vk,i}$.
We see that the homomorphism of $W$-modules $Q/\T_\nub Q = \bigoplus_{i=2}^r Q_i/\nu_i Q_i \to L_\nub/{\T_\nub L_\one}$ is an isomorphism.
\end{proof}

\begin{lemma}\label{lem:qp-eq}
Let $\Gamma$ be a finite group, $A$ and $A'$ be $\Gamma$-lattices.
If $A\oplus B\sim A'\oplus B'$, where $B$ and $B'$ are quasi-permutation $\Gamma$-lattices,
then $A\sim A'$.
\end{lemma}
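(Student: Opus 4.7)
The plan is to reduce the statement to a direct application of two facts already established in Section \ref{sec:1}: first, Lemma \ref{lem:quasi-p}, which tells us that a $\Gamma$-lattice is quasi-permutation precisely when it is equivalent to the zero lattice; and second, the additivity of $\sim$ with respect to direct sums (the observation, stated just before Definition \ref{def.qp}, that $L\sim L'$ and $M\sim M'$ imply $L\oplus M\sim L'\oplus M'$). Transitivity of $\sim$, which is itself part of the definition of an equivalence relation, then does the remaining work.

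More concretely, I would proceed in three short steps. First, apply Lemma \ref{lem:quasi-p} to obtain $B\sim 0$ and $B'\sim 0$ from the assumption that $B$ and $B'$ are quasi-permutation. Second, combine these with the reflexive equivalences $A\sim A$ and $A'\sim A'$ and use additivity to deduce
\[
A\oplus B\sim A\oplus 0=A\quad\text{and}\quad A'\oplus B'\sim A'\oplus 0=A'.
\]
Third, invoke transitivity of $\sim$ together with the hypothesis $A\oplus B\sim A'\oplus B'$ to conclude
\[
A\sim A\oplus B\sim A'\oplus B'\sim A',
\]
which is exactly the desired statement.

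There is no real obstacle here: the lemma is formally a cancellation property for the equivalence relation $\sim$, but since both summands being cancelled are equivalent to $0$, no genuine cancellation is required — only the monoid-type behaviour of $\sim$ under $\oplus$, which has already been recorded. The only mildly delicate point to mention in the write-up is the identification $A\oplus 0=A$ (i.e., that $\sim$ respects the zero lattice as a strict identity for $\oplus$), but this is immediate from the definition of direct sum of lattices and does not require the machinery of short exact sequences.
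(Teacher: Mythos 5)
Your proposal is correct and is essentially identical to the paper's own proof: both use Lemma \ref{lem:quasi-p} to get $B\sim 0$ and $B'\sim 0$, then additivity of $\sim$ under $\oplus$ and transitivity to obtain the chain $A=A\oplus 0\sim A\oplus B\sim A'\oplus B'\sim A'\oplus 0=A'$. No differences worth noting.
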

\begin{proof}
Since $B$ and $B'$ are quasi-permutation, by Lemma \ref{lem:quasi-p}
they are equivalent to 0, and we have
$$
A=A\oplus 0\sim A\oplus B\sim A'\oplus B'\sim A'\oplus 0=A'.
$$
This completes the proof of Lemma \ref{lem:qp-eq} and hence of Lemma
\ref{lem:equivalent}.
\end{proof}

To complete the proof of Proposition  \ref{prop:latticeL} it suffices to prove the next lemma.

\begin{lemma}\label{lem:not-qp}
Let $p| d$ be a prime.
Then there exists a subgroup $\Gamma\subset W$ isomorphic to $(\Z/p\Z)^m$ for some natural $m$
such that the $\Gamma$-lattice  $L_\one:=L_{(1,\dots,1)}$ is not quasi-permutation.
\end{lemma}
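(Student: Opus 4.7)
The plan is to construct a subgroup $\Gamma\cong (\Z/p\Z)^2$ of $W$ and reduce the non-quasi-permutation of $L_\one$ as a $\Gamma$-lattice to the non-quasi-invertibility of $J_\Gamma$ established in Proposition~\ref{prop:J-Gamma}. Concretely, the aim is to show that $L_\one$ is equivalent, in the Colliot-Th\'el\`ene--Sansuc sense $\sim$, to $J_\Gamma$ modulo permutation summands; since quasi-permutation is invariant under $\sim$, this will force $L_\one$ to fail to be quasi-permutation as a $\Gamma$-lattice.

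First I would construct $\Gamma$. Since $p\mid d$ and $d\mid \gcd(n_1,\dots,n_r)$, each $n_i$ is divisible by $p$, so each factor $\mathfrak{S}_{n_i}$ of $W$ contains a $p$-cycle. Under the hypotheses of Proposition~\ref{prop:latticeL}, one can arrange two commuting $p$-cycles $\sigma_1,\sigma_2$ with disjoint supports: when $r\ge 2$, place one $p$-cycle in each of $\mathfrak{S}_{n_1}$ and $\mathfrak{S}_{n_2}$; when $r=1$ with $n_1\ge 2p$, take two disjoint $p$-cycles inside $\mathfrak{S}_{n_1}$. The restriction ``$\sum n_i>4$ when $d=2^s$'' excludes precisely the marginal $p=2$ cases (such as $r=2,\ n_1=n_2=2$ or $r=1,\ n_1\in\{2,4\}$) where neither construction is available. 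Set $\Gamma=\langle\sigma_1,\sigma_2\rangle\cong (\Z/p\Z)^2$.

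Next I would analyze $L_\one|_\Gamma$ along the lines of Step~4 of the proof of Proposition~\ref{prop:one-vector}. One peels off the obvious permutation $\Gamma$-sublattices: the basis vectors $\alpha_{\vk,i}$ of $Q$ lying outside the supports of $\sigma_1,\sigma_2$ form a trivial, hence permutation, $\Gamma$-summand; the pieces of $Q_i$ inside these supports are augmentation-ideal-type modules of $\Z[\langle\sigma_\ell\rangle]$, which are $\sim 0$ as cyclic-group lattices via the short exact sequence $0\to I\to \Z[\langle\sigma_\ell\rangle]\to \Z\to 0$. After these reductions the residual $\Gamma$-lattice is generated by the $\Gamma$-orbit of $w_\one$ subject to the relation $N_\Gamma w_\one=0$, which holds because the cyclic sums $\sum_{i=0}^{p-1}\sigma_\ell^i\omega$ of any type-$\AA$ fundamental weight $\omega$ lie in the root lattice and hence in the permutation summand already peeled off. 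A rank count verifies this is the only relation, identifying the residual lattice with $J_\Gamma$ up to further permutation summands; thus $L_\one\sim J_\Gamma$ as $\Gamma$-lattices.

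By Proposition~\ref{prop:J-Gamma}, $J_\Gamma$ is not quasi-invertible, hence not quasi-permutation; since $\sim$ preserves the quasi-permutation property, $L_\one$ is not quasi-permutation as a $\Gamma$-lattice, proving the lemma and completing the proof of Proposition~\ref{prop:latticeL}. The main obstacle is the middle identification: for $p\ge 3$ the rank of $L_\one$ is typically strictly smaller than $p^2-1=\rk J_\Gamma$, so the equivalence $L_\one\sim J_\Gamma$ must be established \emph{stably} by adding appropriate permutation summands to both sides. This requires careful bookkeeping of how the $w_\one$-generator interacts with the cyclic-orbit sums inside each $Q_i$, a somewhat more delicate analogue of the direct-summand analysis carried out in the proof of Proposition~\ref{prop:one-vector}.
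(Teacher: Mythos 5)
Your proposal takes a genuinely different route from the paper, and unfortunately the central claim fails. The paper does \emph{not} reduce Lemma~\ref{lem:not-qp} to $J_\Gamma$: it takes the much larger group $\Gamma=(\Z/p\Z)^{n/p}$, generated by $n/p$ disjoint $p$-cycles covering \emph{all} $n$ indices (so $m=n/p$, not $2$), identifies $\varphi(L_\one)$ with $N=\Lambda_n(d)\cap\varphi(V)$ (Lemma~\ref{lem:varphi}), shows $N\sim\Lambda_n(d)$ via the exact sequence $0\to N\to\Lambda_n(d)\to\Z^{r-1}\to0$ with trivial quotient, and then invokes the proofs of Propositions 7.4 and 7.8 of \cite{LPR} -- which is exactly where the hypothesis ``$p$ odd or $n>p^2$'' does its work. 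Your two-$p$-cycle group $\Gamma\cong(\Z/p\Z)^2$ is too small, and the asserted equivalence $L_\one\sim J_\Gamma\oplus(\text{permutation})$ is false for it. Concretely, take $r=2$, $n_1=n_2=4$, $d=p=2$ (allowed: $\sum n_i=8>4$), and $\sigma_1,\sigma_2$ the transpositions of $\ve_{1,1},\ve_{2,1}$ and of $\ve_{1,2},\ve_{2,2}$, exactly as you prescribe. Modulo $Q$ one may replace $w_\one$ by the $\Gamma$-fixed vector $v=\tfrac12(\ve_{1,1}+\ve_{2,1}-\ve_{3,1}-\ve_{4,1})+\tfrac12(\ve_{1,2}+\ve_{2,2}-\ve_{3,2}-\ve_{4,2})$, so $L_\one=\langle Q,v\rangle$. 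Setting $b_j=\ve_{j,1}-\ve_{3,1}$ $(j=1,2)$, $b_3=\ve_{3,1}-\ve_{4,1}$, and $c_1,c_2,c_3$ analogously in the second factor, one has
\begin{equation*}
2v=(b_1+b_2+b_3)+(c_1+c_2+c_3),
\end{equation*}
so $b_3=2v-b_1-b_2-c_1-c_2-c_3$ and $\{b_1,b_2,c_1,c_2,c_3,v\}$ is a $\Z$-basis of $L_\one$, which $\Gamma$ \emph{permutes} ($\sigma_1$ swaps $b_1,b_2$; $\sigma_2$ swaps $c_1,c_2$; the rest are fixed). Thus $L_\one$ is a permutation $\Gamma$-lattice, in particular quasi-permutation as a $\Gamma$-lattice; were $L_\one\sim J_\Gamma\oplus(\text{permutation})$, Lemmas~\ref{lem:quasi-p} and \ref{lem:qp-eq} would make $J_\Gamma$ quasi-permutation, contradicting Proposition~\ref{prop:J-Gamma}.

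The failure is structural, not an artifact of $p=2$: whenever the supports of your two $p$-cycles do not exhaust all $n$ coordinates (automatic when $r\ge3$, since you ignore the factors $i\ge3$, and also whenever some $n_i>p$), each uncovered index yields $\Gamma$-fixed root directions, and since $\alpha_{n_i-1,i}$ occurs with coefficient $1$ in $dw_\one$, the fractional generator can be traded for such a fixed basis vector; one then gets $0\to Q_1|_\Gamma\oplus Q_2|_\Gamma\oplus\Z^k\to L_\one\to\Z\to0$ with trivial quotient, and each $Q_i|_\Gamma$ is quasi-permutation by the augmentation sequence you yourself cite, whence $L_\one\sim0$. (For the same reason your reading of the hypothesis ``$\sum n_i>4$ when $d=2^s$'' is off: for $n_1=n_2=2$, $d=2$ your construction \emph{is} available, but the conclusion is false since $L_\one\cong\X(\SO_4)$ is quasi-permutation; the hypothesis excludes cases where the \emph{statement} fails, e.g.\ $\Lambda_4(2)\cong\X(\SO_6)$, not cases where two disjoint $p$-cycles are unavailable.) Finally, note that the $J_\Gamma$-mechanism of Proposition~\ref{prop:one-vector} depends essentially on sign changes, i.e.\ on Weyl groups of type $\BB/\DD$; inside $W=\prod_i\SS_{n_i}$ no such elements exist, which is precisely why the paper routes this lemma through $\Lambda_n(d)$ and the cited results of \cite{LPR} instead of through $J_\Gamma$.
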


\begin{subsec}
Denote by $U_i$ the space $\R^{n_i}$ with canonical basis $\ve_{1,i}\, ,\, \ve_{2,i}\,,\,\dots,\,\ve_{n_i,i}$\,.
Denote by $V_i$ the subspace of codimension 1 in $U_i$ consisting of vectors with zero sum of the coordinates.
The group $W_i=\SS_{n_i}$ (the symmetric group) permutes the basis vectors
$\ve_{1,i}\, ,\, \ve_{2,i}\,,\,\dots,\,\ve_{n_i,i}$ and thus acts on $U_i$ and $V_i$.
Consider the homomorphism of vector spaces
$$
\chi_i\colon U_i\to\R,\quad \sum_{\lambda = 1}^{n_i}\beta_{\lambda,i}\ve_{\lambda,i}\mapsto  \sum_{\lambda = 1}^{n_i}\beta_{\lambda,i}
$$
taking a vector to the sum of its coordinates.
Clearly this homomorphism is $W_i$-equivariant, where $W_i$ acts trivially on $\R$.
We have  short exact sequences
$$
0\to V_i\to U_i\labelto{\chi_i}\R\to 0.
$$
Set $U=\bigoplus_{i=1}^r U_i,\ V=\bigoplus_{i=1}^r V_i$. The group $W=\prod_{i=1}^r W_i$ naturally acts on $U$ and $V$,
and we have an exact sequence of $W$-spaces
\begin{equation}\label{eq:exact1}
0\to V\to U\labelto{\chi}\R^r\to 0,
\end{equation}
where $\chi=(\chi_i)_{i=1,\dots,r}$ and $W$ acts trivially on $\R^r$.
\sms

Set $n=\sum_{i=1}^r n_i$.
Consider the vector space  $\Ubar:=\R^n$ with canonical basis $\veb_1,\veb_2,\dots,\veb_n$.
Consider the natural isomorphism
$$\varphi\colon U=\bigoplus_i U_i\isoto\Ubar$$
that takes $\ve_{1,1},\ve_{2,1},\dots,\ve_{n_1,1}$ to $\veb_1,\veb_2,\dots,\veb_{n_1}$,
takes  $\ve_{1,2},\ve_{2,2},\dots,\ve_{n_2,2}$ to $\veb_{n_1+1},\veb_{n_1+2},\dots,\veb_{n_1+n_2}$, and so on.
Let $\Vbar$ denote the subspace of codimension 1 in $\Ubar$ consisting of vectors with zero sum of the coordinates.
Sequence \eqref{eq:exact1} induces an exact sequence of $W$-spaces
\begin{equation}\label{eq-exact0}
0\to \varphi(V)\to \Vbar\labelto{\psi} \R^r\labelto{\Sigma} \R\to 0.
\end{equation}
Here $\psi=(\psi_i)_{i=1,\dots,r}$, where $\psi_i$ takes a vector
$\sum_{j=1}^n \beta_{j}\,\veb_{j}\in \Vbar$ to $\sum_{\lambda=1}^{n_i} \beta_{n_1+\dots+n_{i-1}+\lambda}$, and the map
$\Sigma$ takes a vector in $\R^r$ to the sum of its coordinates.
Note that $W$ acts trivially on $\R^r$ and $\R $.
\sms

We have a lattice $Q_i\subset V_i$ for each $i=1,\dots, r$, a lattice $Q=\bigoplus_i Q_i\subset \bigoplus_i V_i$,
and a lattice $\Qbar:=\Z \AA_{n-1}$ in $\Vbar$ with basis $\veb_1-\veb_2,\dots,\veb_{n-1}-\veb_n$.
The isomorphism  $\varphi$ induces an embedding of $Q=\bigoplus_i Q_i$ into $\Qbar$.
Under this embedding
\begin{align*}
&\alpha_{1,1}\mapsto \alphabar_1,\ \alpha_{2,1}\mapsto \alphabar_2,\ \dots,\ \alpha_{n_1-1,1}\mapsto\alphabar_{n_1-1},\\
&\alpha_{1,2}\mapsto \alphabar_{n_1+1},\ \alpha_{2,2}\mapsto \alphabar_{n_1+2},\ \dots,\ \alpha_{n_2-1,2}\mapsto\alphabar_{n_1+n_2-1},\\
&\dots\dots\dots\dots\dots\dots\dots\dots\dots\dots\dots\dots\dots\dots\dots\dots\dots\\
&\alpha_{1,r}\mapsto \alphabar_{n_1+n_2+\dots+n_{r-1}+1},\ \dots,\ \alpha_{n_r-1,r}\mapsto\alphabar_{n-1},
\end{align*}
while $\alphabar_{n_1},\alphabar_{n_1+n_2},\ \dots,\ \alphabar_{n_1+n_2+\dots+n_{r-1}}$ are skipped.
\end{subsec}

\begin{subsec}
We write $L$ for $L_\one$ and $w$ for $w_\one\in \frac{1}{d} Q$, where $Q=\bigoplus_i Q_i$. Then
$$
w=\sum_{i=1}^r w_i, \quad w_i=\frac{1}{d}[(n_i-1)\alpha_{1,i}+\dots+\alpha_{n_i-1,i}].
$$
Recall that
$$
Q_i=\Z \AA_{n_i-1}=\{(a_j)\in \Z^{n_i}\mid \sum_{j=1}^{n_i} a_j=0\}.
$$

Set
$$
\wbar=\frac{1}{d}\sum_{j=1}^{n-1} (n-j)\alphabar_j.
$$
Set $\Lambda_n(d)=\langle \Qbar,\wbar\rangle$.
Note that $\Lambda_n(d)=Q_n(n/d)$ with the notation of \cite[Subsection 6.1]{LPR}.
Set
$$
N=\varphi(Q\otimes_\Z\R)\cap\Lambda_n(d)=\varphi(V)\cap\Lambda_n(d).
$$
\end{subsec}

\begin{lemma}\label{lem:varphi}
$\varphi(L)=N$.
\end{lemma}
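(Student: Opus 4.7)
The plan is to establish the two inclusions $\varphi(L)\subseteq N$ and $N\subseteq\varphi(L)$ separately.

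First, consider $\varphi(L)\subseteq N$. Since $L=\langle Q,w\rangle$ and the embedding $\varphi$ sends $Q=\bigoplus_i Q_i$ into $\Qbar$ as described just before the statement, we have $\varphi(Q)\subseteq\Qbar\cap\varphi(V)\subseteq N$, so it remains to check that $\varphi(w)\in\Lambda_n(d)$. I would compute $\varphi(w)-\wbar$ in the basis $(\alphabar_j)_{j=1}^{n-1}$: the coefficient of $\alphabar_j$ in $\varphi(w)$ equals $(n_i-\vk)/d$ when $j=n_1+\dots+n_{i-1}+\vk$ with $1\le\vk\le n_i-1$, and equals $0$ at the skipped indices $j=n_1+\dots+n_i$, whereas in $\wbar$ the coefficient is $(n-j)/d$ for every $j$. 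A short calculation shows that in both cases the difference equals $-(n_{i+1}+\dots+n_r)/d$, which is an integer because $d\mid c=\gcd(n_1,\dots,n_r)$ divides each $n_j$. Hence $\varphi(w)\equiv\wbar\pmod{\Qbar}$, so $\varphi(w)\in\Lambda_n(d)$. Since each $w_i\in V_i$, one also has $\varphi(w)\in\varphi(V)$, and therefore $\varphi(L)\subseteq N$.

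For the reverse inclusion $N\subseteq\varphi(L)$, I would exploit the cyclic quotient $\Lambda_n(d)/\Qbar\cong\Z/d\Z$, which is generated by the class of $\wbar$ and, by the previous paragraph, also by the class of $\varphi(w)$. Given $x\in N$, choose $k\in\Z$ with $x-k\varphi(w)\in\Qbar$ and set $y:=k\varphi(w)\in\varphi(L)$. Since both $x$ and $y$ lie in $\varphi(V)$, we have $x-y\in\Qbar\cap\varphi(V)$. The key identification $\Qbar\cap\varphi(V)=\varphi(Q)$ is immediate from the descriptions $\Qbar=\{(a_j)\in\Z^n\mid\sum_j a_j=0\}$ and $\varphi(V)=\{(\beta_j)\in\R^n\mid\sum_{\lambda=1}^{n_i}\beta_{n_1+\dots+n_{i-1}+\lambda}=0\text{ for each }i\}$, because block-wise vanishing sums automatically force the total sum to vanish. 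Hence $x-y\in\varphi(Q)\subseteq\varphi(L)$, and so $x\in\varphi(L)$.

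The only non-bookkeeping step is the divisibility check in the first inclusion, which uses in an essential way the hypothesis $d\mid\gcd(n_1,\dots,n_r)$; I expect this to be the main (though quite modest) obstacle, whereas the index-chasing in the second inclusion is completely formal.
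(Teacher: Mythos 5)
Your proof is correct, and it is organized differently from the paper's. Both arguments rest on the same integrality facts (since $d$ divides every $n_i$, the relevant partial sums $(n_{i+1}+\dots+n_r)/d$ are integers), but the paper proceeds by introducing the auxiliary vector $\mu=\wbar-\sum_{i=1}^{r-1}\tfrac{n-j_i}{d}\alphabar_{j_i}$, proving via a coefficient-vanishing analysis that every $y\in N$ lies in $\langle\alphabar_j\ (j\in J),\mu\rangle$, so that $N=\langle\alphabar_j\ (j\in J),\mu\rangle$, and then showing this generating set coincides with $\langle\alphabar_j\ (j\in J),\varphi(w)\rangle=\varphi(L)$. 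You instead prove the congruence $\varphi(w)\equiv\wbar\pmod{\Qbar}$ directly (your block-by-block computation of the difference is exactly right, including the skipped indices), which gives $\varphi(L)\subseteq N$ at once, and you handle the reverse inclusion by a quotient argument: any $x\in N$ differs from a multiple of $\varphi(w)$ by an element of $\Qbar\cap\varphi(V)$, and the identification $\Qbar\cap\varphi(V)=\varphi(Q)$ (integer vectors with vanishing block sums) finishes the proof. Your route is slightly shorter and more structural, replacing the paper's explicit analysis of arbitrary elements of $N$ by the single lattice-theoretic identity $\Qbar\cap\varphi(V)=\varphi(Q)$; the paper's approach buys, as a by-product, an explicit generating set (indeed a basis) of $N$, namely $\{\alphabar_j\ (j\in J)\}\cup\{\mu\}$, though this extra information is not used elsewhere.
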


\begin{proof}
Write $j_1=n_1,\ j_2=n_1+n_2,\ \dots,\ j_{r-1}=n_1+\dots+n_{r-1}$.
Set $J=\{1,2,\dots,n-1\}\smallsetminus \{j_1,j_2,\dots,j_{r-1}\}$.
Set
$$
\mu=\frac{1}{d} \sum_{j\in J}  (n-j)\alphabar_j=\wbar-\sum_{i=1}^{r-1} \frac{n-j_i}{d}\alphabar_{j_i}.
$$
Note that $d|n$ and $d|j_i$ for all $i$, hence
the coefficients $(n-j_i)/d$ are integral, and therefore $\mu\in\Lambda_n(d)$.
Since also $\mu\in\varphi(Q\otimes_\Z \R)$, we see that $\mu\in N$.
\sms

Let $y\in N$.
Then
$$
y=b\wbar+\sum_{j=1}^{n-1} a_j\alphabar_j
$$
where $b,a_j\in\Z$, because $y\in\Lambda_n(d)$.
We see that in the basis $\alphabar_1,\dots,\alphabar_{n-1}$ of $\Lambda_n(d)\otimes_\Z \R$,
the element $y$ contains $\alphabar_{j_i}$ with coefficient
$$
b\frac{n-j_i}{d} +a_{j_i}.
$$
Since $y\in \varphi( Q\otimes_\Z \R)$, this coefficient must be 0:
$$
b\frac{n-j_i}{d}+ a_{j_i}=0.
$$
Consider
\begin{align*}
y-b\mu=y-b\left(\wbar-\sum_{i=1}^{r-1}\frac{n-j_i}{d}\alphabar_{j_i}\right)=
&y-b\wbar+\sum_{i=1}^{r-1}\frac{b(n-j_i)}{d}\alphabar_{j_i}\\
=\sum_{j=1}^{n-1} &a_j\alphabar_j+\sum_{i=1}^{r-1}\frac{b(n-j_i)}{d}\alphabar_{j_i}=\sum_{j\in J} a_j\alphabar_j\,,
\end{align*}
where $a_j\in\Z$.
We see that $y\in \langle \alphabar_j\ (j\in J),\mu\rangle$ for any $y\in N$,
hence $N\subset \langle \alphabar_j\ (j\in J),\mu\rangle$.
Conversely, $\mu\in N$  and $\alphabar_j \in N$ for $j\in J$,
hence $\langle \alphabar_j\ (j\in J),\mu\rangle\subset N$, thus
\begin{equation}\label{eq:N-mu}
N=\langle \alphabar_j\ (j\in J),\mu\rangle.
\end{equation}
\sms

Now
$$
\varphi(w)=\frac{1}{d}\left[\sum_{j=1}^{n_1-1} (n_1-j)\alphabar_j +\sum_{j=1}^{n_2-1} (n_2-j)\alphabar_{n_1+j}+\dots +\sum_{j=1}^{n_r-1} (n_r-j)\alphabar_{j_{r-1}+j}\right]
$$
while
$$
\mu=\frac{1}{d}\left[\sum_{j=1}^{n_1-1} (n-j)\alphabar_j +\sum_{j=1}^{n_2-1} (n-n_1-j)\alphabar_{n_1+j}+\dots +\sum_{j=1}^{n_r-1} (n_r-j)\alphabar_{j_{r-1}+j}\right].
$$
Thus
$$
\mu=\varphi(w)+\frac{n-n_1}{d}\sum_{j=1}^{n_1-1}\alphabar_j +\frac{n-n_1-n_2}{d}\sum_{j=1}^{n_2-1}\alphabar_{n_1+j}+\dots+\frac{n_r}{d}\sum_{j=1}^{n_r-1}\alphabar_{j_{r-1}+j}\,,
$$
where the coefficients
$$
\frac{n-n_1}{d},\quad \frac{n-n_1-n_2}{d},\quad\dots,\quad\frac{n_r}{d}
$$
are integral.
We see that
\begin{equation}\label{eq:mu-varphi(w)}
\langle\alphabar_j\ (j\in J),\ \mu\rangle=\langle\alphabar_j\ (j\in J),\ \varphi(w)\rangle.
\end{equation}
From \eqref{eq:N-mu} and \eqref{eq:mu-varphi(w)} we obtain that
\[
N=\langle\alphabar_j\, (j\in J),\,\mu\rangle=\langle\alphabar_j\, (j\in J),\, \varphi(w)\rangle=\varphi(L).\qedhere
\]
\end{proof}

\begin{subsec}
Now let $p|\gcd(n_1,\dots,n_r)$.
Recall that $W=\prod_{i=1}^r \SS_{n_i}$.
Since $p|n_i$ for all $i$, we can naturally embed $(\SS_p)^{n_i/p}$ into $\SS_{n_i}$.
We obtain a natural embedding
$$
\Gamma:=(\Z/p\Z)^{n/p}\into (\SS_p)^{n/p}\into W.
$$

In order to prove Lemma \ref{lem:not-qp}, it suffices to prove the next Lemma \ref{lem:p-p^2}.
Indeed, if $n$ has an odd prime factor $p$, then by Lemma \ref{lem:p-p^2} $L$ is not quasi-permutation.
If $n=2^s$, then we take $p=2$.
By the assumptions of Proposition \ref{prop:latticeL}, $n>4=2^2$,
and again by Lemma \ref{lem:p-p^2} $L$ is not quasi-permutation.
This proves  Lemma \ref{lem:not-qp}.
\end{subsec}

\begin{lemma}\label{lem:p-p^2}
If either $p$ odd or $n>p^2$, then $L$ is not quasi-permutation as a $\Gamma$-lattice.
\end{lemma}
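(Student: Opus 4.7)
The strategy is to reduce to showing that $\Lambda_n(d)$ is not quasi-permutation as a $\Gamma$-lattice, and then to invoke Proposition \ref{prop:J-Gamma} via a well-chosen elementary abelian $p$-subgroup. First, by Lemma \ref{lem:varphi}, $\varphi$ identifies $L$ with $N := \varphi(V) \cap \Lambda_n(d) \subseteq \Lambda_n(d)$. Intersecting the exact sequence \eqref{eq-exact0} with $\Lambda_n(d)$ yields a short exact sequence of $\Gamma$-lattices
$$
0 \to N \to \Lambda_n(d) \to T \to 0,
$$
where $T := \psi(\Lambda_n(d)) \subseteq \ker(\Sigma) \subseteq \Z^r$ carries the trivial $\Gamma$-action inherited from $\R^r$. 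Since any trivial $\Gamma$-lattice is a permutation $\Gamma$-lattice, this shows $N \sim \Lambda_n(d)$, and hence $L \sim \Lambda_n(d)$, as $\Gamma$-lattices. It thus suffices to prove that $\Lambda_n(d)$ is not quasi-invertible as a $\Gamma$-lattice.

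For the main step I would pass to a rank-$2$ elementary abelian subgroup $\Gamma_0 \cong (\Z/p\Z)^2$ of $\Gamma = (\Z/p\Z)^{n/p}$. The hypothesis ``$p$ odd or $n > p^2$'' guarantees $n/p \geq 2$ in all nondegenerate cases, so such $\Gamma_0$ exists: take it to be generated by two of the coordinate factors of $\Gamma$, so that $\Gamma_0$ permutes two disjoint blocks $B_1, B_2$ of $p$ basis vectors of $\Ubar$ cyclically and acts trivially on the remaining $n - 2p$ basis vectors. Following the blueprint of Steps $3$--$4$ in the proof of Proposition \ref{prop:one-vector}, I would examine the $\Gamma_0$-orbit of $\wbar$ inside $\Lambda_n(d)$, establish a $\sim$-equivalence $\Lambda_n(d) \sim L_1$ by projection onto a trivial $\Gamma_0$-quotient, and show that the sublattice of $L_1$ spanned by this orbit is isomorphic to the Voskresenski\u\i\ lattice $J_{\Gamma_0}$ and splits off as a direct summand, the complement being spanned by suitable differences $\veb_a - \veb_b$ within each of $B_1, B_2$ and by the remaining trivially-acted-on coordinates. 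Since $J_{\Gamma_0}$ is not quasi-invertible by Proposition \ref{prop:J-Gamma}, the conclusion follows.

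The excluded case $p = 2, n = 4$ is genuinely necessary: then $\Lambda_4(2)$ is the character lattice $\X(\SO_6)$, which is quasi-permutation by Proposition \ref{prop:CK,LPR}. The degenerate case $n = p$ (with $p$ odd and $r = 1, d = p$), in which $\Gamma = \Z/p\Z$ does not contain $(\Z/p\Z)^2$, must be treated separately by a direct cyclic argument applied to the full weight lattice $L = P(\AA_{p-1})$. The principal technical obstacle is the explicit verification of the $J_{\Gamma_0}$ direct-summand structure inside $\Lambda_n(d)$; for odd $p$ the orbit of $\wbar$ under $\Gamma_0$ has $p^2$ elements and $J_{\Gamma_0}$ has rank $p^2 - 1$, so the required rank count and basis-exchange argument is considerably more delicate than the rank-$3$ case handled in Proposition \ref{prop:one-vector}, but the same overall pattern should apply.
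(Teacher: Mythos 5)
Your first paragraph is correct, and it is exactly the paper's own argument: Lemma \ref{lem:varphi} identifies $L$ with $N=\varphi(V)\cap\Lambda_n(d)$, the quotient $\Lambda_n(d)/N$ embeds into $\Vbar/\varphi(V)$ and is therefore a lattice with trivial $\Gamma$-action, whence $N\sim\Lambda_n(d)$. But at this point the paper does not prove the remaining (hard) statement at all: it cites the proofs of Propositions 7.4 and 7.8 of \cite{LPR} for the fact that $\Lambda_n(d)=Q_n(n/d)$ is not quasi-permutation over $\Gamma$. You replace that citation by a direct argument, and the direct argument breaks at its central claim. Writing $\alphabar_j=\veb_j-\veb_{j+1}$ and telescoping,
$$
\wbar=\frac{1}{d}\sum_{j=1}^{n-1}(n-j)\alphabar_j=\frac{n}{d}\,\veb_1-\frac{1}{d}\sum_{j=1}^{n}\veb_j ,
$$
and since $\sum_j\veb_j$ is fixed by all of $\SS_n$, every permutation $\gamma$ sends $\wbar$ to $\frac{n}{d}\veb_{\gamma(1)}-\frac{1}{d}\sum_j\veb_j$. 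Hence the $\Gamma_0$-orbit of $\wbar$ is in bijection with the $\Gamma_0$-orbit of the single index $1$: for your $\Gamma_0$ (two disjoint coordinate $p$-cycles on blocks $B_1,B_2$) the stabilizer of $\wbar$ contains the whole factor acting on the block not containing $1$, so the orbit has at most $p$ elements, not $p^2$, and it spans a sublattice of rank at most $p$. Since $\rk J_{\Gamma_0}=p^2-1>p$, no copy of $J_{\Gamma_0}$ can arise from this orbit, and the proposed direct-summand decomposition collapses. The failure is not a repairable slip: even a vector with free $\Gamma_0$-orbit, such as $\frac{n}{d}(\veb_a+\veb_b)-\frac{2}{d}\sum_j\veb_j$ with $a\in B_1$, $b\in B_2$, spans a lattice of rank only $2p-1$, which equals $p^2-1$ precisely when $p=2$. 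This is exactly why the blueprint of Proposition \ref{prop:one-vector} (where, moreover, the $(\Z/2\Z)^2$ is embedded ``diagonally'', using sign changes unavailable in type $\AA$) works for $p=2$ but does not generalize to odd $p$; for odd $p$ the argument in \cite{LPR} is of a genuinely different, cohomological nature.

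Your plan for the degenerate case $n=p$ is also unrealizable, because the statement you would need there is false: for $r=1$, $n_1=d=p$ one has $L=P(\AA_{p-1})$, and with the $p$-cycle action $P(\AA_{p-1})\cong J_{\Z/p\Z}\cong\Z[\zeta_p]$, the character lattice of the norm-one torus of a cyclic extension of degree $p$; this torus is rational, so this lattice \emph{is} quasi-permutation (more generally, by the Endo--Miyata theorem every lattice over a finite cyclic group is quasi-permutation). So no ``direct cyclic argument'' can exist; this configuration is implicitly excluded by the hypotheses carried over from \cite{LPR}, and it never occurs where Proposition \ref{prop:latticeL} is applied, since there $r\ge 2$ and hence $n\ge 2p$. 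In summary: your reduction step coincides with the paper's, but the core of the lemma --- non-quasi-permutation of $\Lambda_n(d)$ over $(\Z/p\Z)^{n/p}$ --- is not established by your sketch, whereas the paper deliberately outsources precisely this point to \cite{LPR}.
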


\begin{proof}
By Lemma \ref{lem:varphi} it suffices to show that $N$ is not quasi-permutation.
Since $N=\Lambda_n(d)\cap\varphi(V)$,
we have an embedding
$$
\Lambda_n(d)/N\into \Vbar/\varphi(V).
$$
By \eqref{eq-exact0} $\Vbar/\varphi(V)\simeq\R^{r-1}$
and $W$ acts on $\Vbar/\varphi(V)$ trivially.
Thus $\Lambda_n(d)/N\simeq \Z^{r-1}$ and $W$ acts on $\Z^{r-1}$ trivially.
We have an exact sequence of $W$-lattices
$$
0\to N\to \Lambda_n(d)\to \Z^{r-1}\to 0,
$$
with trivial action of $W$ on $\Z^{r-1}$. We obtain that
$N\sim\Lambda_n(d)$ as a $W$-lattice, and hence, as a
$\Gamma$-lattice. Therefore, it suffices to show that
$\Lambda_n(d)=Q_n(n/d)$ is not quasi-permutation as a
$\Gamma$-lattice if either $p$ is odd or $n>p^2$. This is done in
\cite{LPR} in the proofs of Propositions 7.4 and 7.8. This completes
the proof of Lemma \ref{lem:p-p^2} and hence those of Lemma \ref{lem:not-qp} and
Proposition  \ref{prop:latticeL}.
\end{proof}

\section{Quasi-permutation lattices -- case $\AA_{n-1}$}
\label{sec:4}

In this section we prove Theorem \ref{thm:product} in the special case
when $G$ is isogenous to a direct product of groups of type $\AA_{n-1}$ for $n\ge 3$.
\sms

We maintain the notation of Subsection \ref{subsec:qp-An-nu}.
Let $L$ be an intermediate lattice between $Q$ and $P$, i.e., $Q\subset L\subset P$ (\,$L=Q$ are $L=P$ are possible).
Let $S$ denote the image of $L$ in $F$, then $L$ is the preimage of $S\subset F$ in $P$.
Since $W$ acts trivially on $F$, the subgroup $S\subset F$ is $W$-invariant, and therefore,
the sublattice $L\subset P$ is $W$-invariant.

\begin{theorem}\label{thm:qp-lattices}
With the notation of Subsection \ref{subsec:qp-An-nu} assume that $n_i\ge 3$ for all $i=1,2,\dots,r$.
Let $L$ between $Q$ and $P$ be an intermediate lattice,
and assume that $L\cap P_i=Q_i$ for all $i$ such that $n_i=3$ or $n_i=4$.
If $L$ is a quasi-permutation $W$-lattice, then $L=Q$.
\end{theorem}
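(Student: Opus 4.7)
The plan is to induct on the number $r$ of simple factors. The base case $r=1$ is immediate from Proposition~\ref{prop:CK,LPR}: for $n_1\ge 5$ the only quasi-permutation intermediate $W(\AA_{n_1-1})$-lattice is $Q_1$, while for $n_1\in\{3,4\}$ the hypothesis $L\cap P_1=Q_1$ collapses $L$ to $Q_1$. So assume the theorem for strictly fewer than $r\ge 2$ factors, let $L$ be a quasi-permutation lattice satisfying the hypotheses, and set $S:=L/Q\subset F=\bigoplus_i \Z/n_i\Z$. We aim to show $S=0$.

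The first main step applies Lemma~\ref{lem:2.8} in its two-block form to every proper nonempty subset $I'\subsetneq\{1,\ldots,r\}$: the intersection $L_{I'}:=L\cap\bigoplus_{i\in I'}P_i$ is a quasi-permutation $\prod_{i\in I'}W_i$-lattice, intermediate between $\bigoplus_{i\in I'}Q_i$ and $\bigoplus_{i\in I'}P_i$, and it inherits the hypothesis $L_{I'}\cap P_i=L\cap P_i=Q_i$ for $n_i\in\{3,4\}$. By the inductive hypothesis (applicable because $|I'|<r$), $L_{I'}=\bigoplus_{i\in I'}Q_i$. Lifting back to $S$ and varying $I'$, this says that every nonzero element of $S$ has nonzero coordinate in \emph{every} factor $F_i$ -- a \emph{full-support} property.

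The second main step promotes full support to the statement that $S$ is precisely of the shape treated in Subsection~\ref{subsec:qp-An-nu}. For any prime $p$: if $S[p]\ne 0$ then full support forces $p\mid n_i$ for all $i$, and $S[p]$ embeds in $F[p]\cong(\Z/p\Z)^r$ as a subspace whose nonzero vectors all have weight $r$, forcing $\dim_{\F_p}S[p]\le 1$ (any two independent vectors would combine to one with a zero coordinate). Since $|S[p]|\le p$ for every prime, the finite abelian group $S$ is cyclic, say $S=\langle\overline{\nub}\rangle$ of order $d$. Applying full support to each nonzero multiple $k\overline{\nub}$, $1\le k<d$, forces the order of $\bar\nu_i$ in $\Z/n_i\Z$ to be exactly $d$, so $d\mid n_i$ and we can write $\nu_i=(n_i/d)\mu_i$ with $\gcd(\mu_i,d)=1$; rescaling by $\mu_1^{-1}\pmod d$ arranges $\mu_1=1$, and then $L$ coincides with the lattice $L_\nub$ of Subsection~\ref{subsec:qp-An-nu}. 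If $d>1$, Proposition~\ref{prop:latticeL} declares $L_\nub$ not quasi-permutation; the excluded case $d=2^s$ with $\sum n_i\le 4$ cannot occur because $r\ge 2$ and $n_i\ge 3$ give $\sum n_i\ge 6$. This contradicts the quasi-permutation hypothesis on $L$, so $d=1$, $S=0$, and $L=Q$. The main technical obstacle is extracting the full-support property via Lemma~\ref{lem:2.8} and converting it into the rigid cyclic shape that triggers Proposition~\ref{prop:latticeL}; once that is done the argument closes immediately.
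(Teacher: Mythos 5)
Your proof is correct and follows essentially the same route as the paper's: induction on $r$, with Lemma \ref{lem:2.8} plus the inductive hypothesis forcing $S\cap F'_i=0$ (your ``full support''), then identification of a hypothetical $L\neq Q$ with a lattice $L_\nub$ of Subsection \ref{subsec:qp-An-nu}, and finally Proposition \ref{prop:latticeL} for the contradiction. The only cosmetic differences are that the paper deduces cyclicity of $S$ directly from the projections $S\to S_{(i)}$ being isomorphisms (their kernels are $S\cap F'_i=0$ and $S_{(i)}$ sits inside the cyclic group $F_i$) rather than from your $p$-torsion dimension count, and the paper leaves the hypothesis $\sum n_i>4$ of Proposition \ref{prop:latticeL} implicit where you verify it explicitly.
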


\begin{proof}
We prove the theorem by induction on $r$.
The case $r=1$ follows from our assumptions if $n_1=3$ or $n_1=4$, and from Proposition \ref{prop:CK,LPR} if $n_1> 4$.
\sms

We assume that $r>1$ and that the assertion is true for $r-1$. We prove it for $r$.
\sms

For $i$ between $1$ and $r$ we set
$$
Q'_i=\bigoplus_{j\neq i} Q_j,\quad P'_i=\bigoplus_{j\neq i} P_j,\quad F'_i=\bigoplus_{j\neq i} F_j,\quad  W_i'=\prod_{j\neq i} W_j,
$$
then $Q'_i\subset Q$, $P'_i\subset P$, $F'_i\subset F$ and $W'_i\subset W$.
If $L$ is a quasi-permutation $W$-lattice, then by Lemma \ref{lem:2.8} $L\cap P'_i$ is  a quasi-permutation $W'_i$-lattice,
and by the induction hypothesis $L\cap P'_i= Q'_i$.
\sms

Now let $Q\subset L\subset P$, and assume that $L\cap P'_i= Q'_i$ for all $i=1,\dots, r$.
We shall show that if $L\neq Q$ then $L$ is not a quasi-permutation $W$-lattice.
This will prove Theorem \ref{thm:qp-lattices}.
\sms

 Assume that $L\neq Q$. Set $S=L/Q\subset F$, then $S\neq 0$.
We first show that $(L\cap P'_i)/ Q'_i=S\cap F'_i$.
Indeed, clearly $(L\cap P'_i)/ Q'_i\subset L/Q\cap P'_i/Q'_i= S\cap F'_i$.
Conversely, let $f\in S\cap F'_i$, then $f$ can be represented
by some $l\in L$ and by some $p\in P'_i$, and $q:=l-p\in Q$.
Since $L\supset Q$, we see that $p=l-q\in L\cap P'_i$, hence $f\in (L\cap P'_i)/Q'_i$,
and therefore $S\cap F'_i\subset (L\cap P'_i)/ Q'_i$.
Thus  $(L\cap P'_i)/ Q'_i=S\cap F'_i$.
\sms

By assumption we have $L\cap P'_i= Q'_i$, and we obtain that $S\cap F'_i=0$ for all $i=1,\dots, r$.
Let $S_{(i)}$ denote the image of $S$ under the projection $F\to F_i$.
We have a canonical epimorphism $p_i\colon S\to S_{(i)}$ with kernel $S\cap F'_i$.
Since  $S\cap F'_i=0$, we see that $p_i\colon S\to S_{(i)}$ is an isomorphism.
Set $q_i=p_i\circ p_1^{-1}\colon S_{(1)}\to S_{(i)}$, it is an isomorphism.
\sms

We regard $Q_i=\Z \AA_{n_i-1}$ and $P_i=\Lambda_{n_i}$ as the lattices described in \cite[Planche I]{Bourbaki}.
Then we have an isomorphism $F_i\cong \Z/n_i\Z$.
Since $S_{(i)}$ is a subgroup of the cyclic group $F_i\cong \Z/n_i\Z$ and $S\cong S_{(i)}$, we see that $S$ is a cyclic group,
and we see also that $|S|$ divides $n_i$ for all $i$, hence $d:=|S|$ divides $c:=\gcd(n_1,\dots,n_r)$.
\sms

We describe all  subgroups $S$ of order $d$ in $\bigoplus_{i=1}^r \Z/n_i\Z$
such that $S\cap (\bigoplus_{j\neq i} \Z/n_j\Z)=0$ for all $i$.
The element $a_i:=n_i/d+n_i\Z$ is a generator of $S_{(i)}\subset F_i=\Z/n_i\Z$.
Set $b_i=q_i(a_1)$.
Since $b_i$ is a generator of  $S_{(i)}$, we have $b_i=\nubar_i a_i$ for some $\nubar_i\in (\Z/d\Z)^\times$.
Let $\nu_i\in\Z$ be a representative of $\nubar_i$ such that $1\le \nu_i<d$, then $\gcd(\nu_i,d)=1$.
Moreover, since $q_1=\id$, we have $b_1=a_1$, hence $\nubar_1=1$ and $\nu_1=1$.
We obtain an element $\nub=(\nu_1,\dots,\nu_r)$.
With the notation of Subsection \ref{subsec:qp-An-nu} we have $S=S_\nub$ and $L=L_\nub$.
\sms

By Proposition \ref{prop:latticeL} $L_\nub$ is not a quasi-permutation $W$-lattice.
Thus $L$ is not quasi-permutation,
which completes the proof of Theorem \ref{thm:qp-lattices}.
\end{proof}

\section{Proof of Theorem \ref{thm:product}}
\label{sec:5}

\begin{lemma}[well-known]\label{lem:split}
Let $P_1$ and $P_2$ be abelian groups.
Set $P=P_1\oplus P_2=P_1\times P_2$, and let $\pi_1\colon P\to P_1$ denote the canonical projection.
Let $L\subset P$ be a subgroup.
If $\pi_1(L)=L\cap P_1$, then
$$ L=(L\cap P_1)\oplus (L\cap P_2).$$
\end{lemma}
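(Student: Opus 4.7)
The plan is to prove the two inclusions $L \supset (L\cap P_1)\oplus(L\cap P_2)$ and $L \subset (L\cap P_1)+(L\cap P_2)$, and then to check that the sum on the right is indeed direct. The first inclusion and the directness of the sum are essentially formal, so the content of the lemma lies entirely in the second inclusion, and that is where the hypothesis $\pi_1(L)=L\cap P_1$ is used.

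More precisely, I would start by observing that since $P = P_1 \oplus P_2$, we have $P_1 \cap P_2 = 0$ inside $P$; hence a fortiori $(L\cap P_1)\cap(L\cap P_2)=0$, so the sum $(L\cap P_1)+(L\cap P_2)$ is automatically direct. The inclusion $(L\cap P_1)\oplus(L\cap P_2)\subset L$ is immediate because both summands are subgroups of $L$.

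For the reverse inclusion, take an arbitrary $x\in L$ and write it uniquely as $x=x_1+x_2$ with $x_1\in P_1$ and $x_2\in P_2$; then $x_1=\pi_1(x)\in\pi_1(L)$. Here is where the hypothesis kicks in: by assumption $\pi_1(L)=L\cap P_1$, so $x_1\in L\cap P_1$, and in particular $x_1\in L$. Consequently $x_2=x-x_1\in L$, and since $x_2\in P_2$ by construction, we get $x_2\in L\cap P_2$. Thus $x=x_1+x_2\in (L\cap P_1)+(L\cap P_2)$, as required.

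There is no real obstacle; this is a one-line diagram chase whose only subtlety is to notice that the hypothesis is exactly what is needed to lift the first component of $x\in L$ back into $L$ itself. Without that hypothesis one only has $x_1\in \pi_1(L)$, not $x_1\in L$, and the argument collapses.
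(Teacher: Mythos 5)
Your proof is correct and follows the same argument as the paper: set $x_1=\pi_1(x)$, use the hypothesis $\pi_1(L)=L\cap P_1$ to conclude $x_1\in L$, and then observe $x_2=x-x_1\in L\cap P_2$. The only difference is that you also spell out the directness of the sum, which the paper leaves implicit.
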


\begin{proof}
Let $x\in L$. Set $x_1=\pi_1(x)\in\pi_1(L)$.
Since $\pi_1(L)=L\cap P_1$, we have $x_1\in L\cap P_1$.
Set $x_2=x-x_1$, then $x_2\in L\cap P_2$.
We have $x=x_1+x_2$.
This completes the proof of Lemma \ref{lem:split}.
\end{proof}

\begin{subsec}\label{subsec:almost}
Let $I$ be a finite set. For any $i\in I$ let $D_i$ be a connected Dynkin diagram.
Let $D=\bigcupdot_i D_i$ (disjoint union).
Let $Q_i$ and $P_i$ be the root and weight lattices of $D_i$, respectively, and $W_i$ be the Weyl group of $D_i$.
Set
$$
Q=\bigoplus_{i\in I} Q_i,\quad  P=\bigoplus_{i\in I} P_i, \quad W=\prod_{i\in I} W_i.
$$
\end{subsec}

\begin{subsec}
We construct certain quasi-permutation lattices $L$ such that $Q\subset L\subset P$.
\sms

Let $\{\, \{i_1,j_1\},\dots,\{i_s,j_s\}\, \}$ be a set of
non-ordered pairs in $I$ such that  $D_{i_l}$ and $D_{j_l}$ for all
$l=1,\dots,s$ are of type $\BB_1=\AA_1$ and all the indices
$i_1,j_1,\dots,i_s,j_s$ are distinct.
Fix such an $l$.
 We write
$\{i,j\}$ for $\{i_l,j_l\}$ and we set $D_{i,j}:=D_{i}\cupdot D_{j}$,
$Q_{i,j}:=Q_{i}\oplus Q_{j}$,  $P_{i,j}:=P_{i}\oplus P_{j}$. We
regard $D_{i,j}$ as a Dynkin diagram of type $\DD_2$, and we denote
by $M_{i,j}$ the intermediate lattice between $Q_{i,j}$ and
$P_{i,j}$ isomorphic to $\X(\SO_4)$, the character lattice of the
group $\SO_4$; see Section \ref{sec:1}, after Lemma \ref{lem:2.8}.
Let $f_i$ be  a generator of the lattice $Q_i$ of rank 1,  and let $f_j$ be a generator of $Q_j$,
then $P_i=\langle\half f_i\rangle$ and $P_j=\langle\half f_j\rangle$.
Set $e_1^{(l)}=\half(f_i+f_j)$, \ $e_2^{(l)}=\half(f_i-f_j)$,
then $\{e_1^{(l)},e_2^{(l)}\}$ is a basis of $M_{i,j}$\,, and
\begin{equation}\label{eq:M}
M_{i,j}=\left\langle Q_{i,j},e_1^{(l)}\right\rangle,\qquad P_{i,j}=\left\langle M_{i,j},\half(e_1^{(l)}+e_2^{(l)})\,\right\rangle.
\end{equation}
We have $M_{i,j}\cap P_i=Q_i,\ M_{i,j}\cap P_j=Q_j$, and $[M_{i,j}:Q_{i,j}]=2$.
Concerning the Weyl group, we have
$$W(D_{i,j})=W(D_i)\times W(D_j)=W(\DD_2)=\SS_2\times\{\pm 1\},$$
where the symmetric group $\SS_2$ permutes the basis vectors  $e_1^{(l)}$ and $e_2^{(l)}$ of $M_{i,j}$\,,
while the group $\{\pm 1\}$ acts on $M_{i,j}$  by multiplication by scalars.
 We say that $M_{i,j}$ is an {\em indecomposable quasi-permutation lattice}
(it corresponds to the semisimple Cayley group $\SO_4$
which does not decompose into a direct product of its  normal subgroups).

\sms

Set $I'=I\smallsetminus\bigcup_{l=1}^s \{i_l,j_l\}$. For $i\in I'$ let $M_i$ be any quasi-permutation intermediate lattice between $Q_i$ and $P_i$
(such an intermediate lattice exists if and only if $D_i$ is of one of the types
$\AA_n,\ \BB_n,\ \CC_n,\ \DD_n,\ \GG_2$, see Proposition \ref{prop:CK,LPR}).
We say that $M_i$ is a {\em simple quasi-permutation lattice} (it corresponds to a stably Cayley simple group).
We set
\begin{equation}\label{eq:*}
L= \bigoplus_{l=1}^s M_{i_l,j_l}\, \oplus\, \bigoplus_{i\in I'} M_i.
\end{equation}
We say that a lattice $L$ as in \eqref{eq:*} is a
{\em direct sum of indecomposable quasi-permutation  lattices  and simple quasi-permutation  lattices}.
Clearly $L$ is a quasi-permutation $W$-lattice.
\end{subsec}

\begin{theorem}\label{thm:almost}
Let $D, Q, P, W$ be as in Subsection \ref{subsec:almost}.
Let $L$ be an intermediate lattice between $Q$ and $P$, i.e., $Q\subset L\subset P$
(where $L=Q$ and $L=P$ are possible).
If $L$ is a quasi-permutation $W$-lattice, then $L$ is as in \eqref{eq:*}.
Namely, then $L$ is a direct sum of indecomposable  quasi-permutation  lattices $M_{i,j}$
for some set of pairs  $\{\, \{i_1,j_1\},\dots,\{i_s,j_s\}\, \}$
and some family of simple quasi-permutation intermediate lattices $M_i$
between $Q_i$ and $P_i$ for $i\in I'$.
\end{theorem}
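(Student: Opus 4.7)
The plan is to pass to the quotient $S := L/Q$, which sits inside the $W$-trivial group $F := P/Q = \bigoplus_{i \in I} F_i$ with $F_i := P_i/Q_i$, and to show that $S$ decomposes as a direct sum of atomic subgroups, each either contained in a single $F_i$ or equal to the diagonal $\Z/2\Z$ inside $F_i \oplus F_j$ for two indices with $D_i = D_j = \AA_1$. Once this is established, the decomposition \eqref{eq:*} of $L$ will follow.

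First, I would apply Lemma \ref{lem:2.8} to each subset $J \subset I$: the lattice $L_J := L \cap \bigoplus_{j \in J} P_j$ is quasi-permutation as a $\prod_{j \in J} W_j$-lattice. Taking $|J| = 1$ and invoking Proposition \ref{prop:CK,LPR}, we see that each $D_i$ must be of type $\AA_n$, $\BB_n$, $\CC_n$, $\DD_n$, or $\GG_2$, and the intersection $L \cap P_i$ must be one of the quasi-permutation intermediate lattices on the list there; these will become the candidate simple summands $M_i$.

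Next I would analyze, for each nonzero $\bar{x} \in S$, its support $J(\bar{x}) \subset I$, i.e., the set of $i$ for which the $F_i$-component of $\bar{x}$ is nonzero. I claim $J(\bar{x})$ is either a singleton, or a pair $\{i, j\}$ with $D_i = D_j = \AA_1$ and $\bar{x}$ the nontrivial diagonal element of $F_i \oplus F_j$. To establish this I would consider a hypothetical $\bar{x}$ with support of another form and construct inside $L_{J(\bar{x})}$ (after possibly passing to an equivalent lattice in the sense of Section \ref{sec:1}) a sublattice matching the hypotheses of one of the non-quasi-permutation results of Sections \ref{sec:2}--\ref{sec:4}. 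Theorem \ref{thm:qp-lattices} rules out mixing whenever all $D_i$ for $i \in J(\bar{x})$ are of type $\AA_{n_i-1}$ with $n_i \ge 3$; Proposition \ref{prop:one-vector} rules out mixings involving at least one $\BB_{l}$- or $\DD_l$-component (possibly combined with $\AA_{2n-1}$-components), with the exception of pairs of $\BB_1 = \AA_1$ or copies of $\DD_2$ that give rise to $\SO_4$-pairs; Proposition \ref{prop:latticeL} handles residual $\AA$-type mixings not covered by Theorem \ref{thm:qp-lattices}. In each forbidden case the resulting sublattice contradicts the quasi-permutation property of $L_{J(\bar x)}$.

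Once the support structure of $S$ is classified, $S$ splits as a direct sum of the allowed atomic subgroups, and iterated applications of Lemma \ref{lem:split} to the factorizations $P = P_i \oplus \bigoplus_{j \neq i} P_j$ and $P = (P_i \oplus P_j) \oplus \bigoplus_{k \notin \{i, j\}} P_k$ yield the direct sum decomposition \eqref{eq:*} of $L$ itself, with each isolated $i$ contributing a simple summand $M_i = L \cap P_i$ and each $\SO_4$-pair $\{i, j\}$ contributing the indecomposable summand $M_{i, j} = L \cap (P_i \oplus P_j)$. The hard step is the case analysis: one must check every possible Dynkin-type pattern of mixing and exhibit a sublattice of $L_{J(\bar{x})}$ fitting one of the non-quasi-permutation results exactly. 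Particular care will be needed when some $D_i$ is of type $\DD_{l_i}$, since the intermediate lattice $\X(\SO_{2 l_i})$ is already quasi-permutation on a single factor, and when both $D_i$ and $D_j$ are $\AA_1$ but $\bar{x}$ is not the $\SO_4$-diagonal.
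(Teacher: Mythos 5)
Your proposal has two genuine gaps, each fatal on its own. The first is the contradiction mechanism itself: you propose to rule out a ``bad'' configuration by constructing \emph{inside} $L\cap P_{J(\bar x)}$ a sublattice satisfying the hypotheses of Proposition \ref{prop:one-vector}, Proposition \ref{prop:latticeL} or Theorem \ref{thm:qp-lattices}. But the quasi-permutation property is not inherited by sublattices: every $\Gamma$-lattice whatsoever embeds into a permutation lattice (dualize a surjection from a free $\Z[\Gamma]$-module onto its dual), so for instance the non-quasi-invertible lattice $J_\Gamma$, $\Gamma=(\Z/2\Z)^2$, sits inside a permutation lattice. Hence exhibiting a non-quasi-permutation sublattice of $L\cap P_{J(\bar x)}$ yields no contradiction. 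The only inheritance tools the paper has are Lemma \ref{lem:2.8} (intersection with \emph{coordinate} subspaces) and invariance under the equivalence $\sim$; this is exactly why the paper's proof does not argue element-by-element but proceeds by induction on $|I|$, splits off three special cases (the split case, the case where all $D_i$ are $\AA_{n_i-1}$ with $n_i\ge 3$, and the all-$\AA_1$ case), and otherwise constructs an index-two sublattice $L'\subset L$ which the induction hypothesis identifies as a sum of atoms, so as to prove that the \emph{whole} lattice $L$ is exactly of the form \eqref{eq:L'v}; Proposition \ref{prop:one-vector} is then applied to $L$ itself, never to a proper sublattice.

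The second gap is that your support analysis is wrong in both directions, and the failure is concentrated precisely where the paper needs an external ingredient you never invoke. Take three components of type $\AA_1$, so $F=(\Z/2\Z)^3$. The allowed lattice $L=M_{i,j}\oplus P_k$ has $S=\{000,110,001,111\}$, which contains the element $111$ of support $\{i,j,k\}$: so elements of perfectly good lattices can have non-atomic support, and no element-wise classification of the kind you claim can hold. Conversely, let $S$ be the even-weight code $\{000,110,011,101\}$: every nonzero element is a diagonal class supported on a pair of $\AA_1$'s, so this $S$ passes your test, yet it is not a direct sum of your atomic subgroups (so your final splitting step fails), and the corresponding $L$ is not quasi-permutation --- but none of the results you cite can detect this: Proposition \ref{prop:one-vector} explicitly excludes the case $\mu=0$ with all $D_i$ of type $\BB_1$ or $\DD_2$, Proposition \ref{prop:latticeL} requires $S$ cyclic generated by a vector with all coordinates prime to $d$, and Theorem \ref{thm:qp-lattices} requires $n_i\ge 3$. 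Ruling out such pure-$\AA_1$ configurations is a separate, substantial theorem: the paper's proof handles its ``$\AA_1$-case'' by citing \cite[Theorem 18.1]{BKLR}, and without that input (or an equivalent argument, which your intrinsic analysis does not supply) the statement cannot be proved.
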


\begin{remark}\label{rem:pairs}
The set of pairs $\{\, \{i_1,j_1\},\dots,\{i_s,j_s\}\, \}$ in Theorem \ref{thm:almost} is uniquely determined by $L$.
Namely, a pair $\{i,j\}$ belongs to this set if and only if the Dynkin diagrams $D_i$ and $D_j$ are of type $\BB_1=\AA_1$ and
$$
 L\cap P_i= Q_i\,,\quad L\cap P_j= Q_j\,,  \quad \text{while } L\cap (P_i\oplus P_j)\neq Q_i\oplus Q_j\,.
$$
\end{remark}

\begin{proof}[Proof of Theorem \ref{thm:almost}]
We prove the theorem by induction on $m=|I|$, where $I$ is as in Subsection \ref{subsec:almost}.
The case $m=1$ is trivial.
\sms

We assume that $m\ge 2$ and that the theorem is proved for all $m'<m$. We prove it for $m$.
First we consider three special cases.
\sms

{\em Split case}.
Assume that for some subset $A\subset I$, $A\neq I$, $A\neq\emptyset$,
we have $\pi_A(L)=L\cap P_A$, where $P_A=\bigoplus_{i\in A}P_i$ and $\pi_A\colon P\to P_A$ is the canonical projection.
Then by Lemma \ref{lem:split} we have $L=(L\cap P_A)\oplus(L\cap P_{A'})$, where $A'=I\smallsetminus A$.
By Lemma \ref{lem:2.8} $L\cap P_A$ is a quasi-permutation $W_A$-lattice, where $W_A=\prod_{i\in A} W_i$.
By the induction hypothesis  the lattice $L\cap P_A$ is  a direct sum
of indecomposable  quasi-permutation  lattices  and simple quasi-permutation  lattices.
Similarly, $L\cap P_{A'}$ is such a direct sum.
We conclude that $L=(L\cap P_A)\oplus(L\cap P_{A'})$ is such a direct sum, and we are done.

\sms

{\em $\AA_{n-1}$-case}.
Assume that all $D_i$ are of type $\AA_{n_i-1}$, where $n_i\ge 3$ (so $\AA_1$ is not permitted).
We assume also that when $n_i=3$ and when $n_i=4$ (that is, for $\AA_2$ and for $\AA_3=\DD_3$) we have $L\cap P_i=Q_i$
(for $n_i > 4$ this is automatic because $L\cap P_i$ is a quasi-permutation $W_i$-lattice, see Proposition \ref{prop:CK,LPR}).
In this case by Theorem \ref{thm:qp-lattices} we have $L=Q=\bigoplus Q_i$,
hence $L$ is a direct sum of simple quasi-permutation  lattices, and we are done.

\sms

{\em $\AA_1$-case}.
Assume that all $D_i$ are of type $\AA_1$.
Then by \cite[Theorem 18.1]{BKLR} the lattice $L$ is  a direct sum of indecomposable  quasi-permutation lattices
and simple quasi-permutation lattices, and we are done.
\sms

Now we shall show that these three special cases exhaust all the quasi-permutation lattices.
In other words, we shall show that if $Q\subset L\subset P$ and $L$ is not as in one of these three cases,
then $L$ is not quasi-permutation. This will complete the proof of the theorem.
\sms

For  the sake of contradiction, let us assume that $Q\subset L\subset P$,
that $L$ is not in one of the three special cases above,
and that $L$ is a {quasi-permutation} $W$-lattice.
We shall show in three steps that $L$ is as in Proposition \ref{prop:one-vector}.
By Proposition \ref{prop:one-vector}, $L$ is not quasi--permutation, which contradicts our assumptions.
This contradiction will prove the theorem.
\sms

{\em Step 1.}
 For $i\in I$ consider the intersection $L\cap P_i$, it is a quasi-permutation $W_i$-lattice (by Lemma \ref{lem:2.8}),
hence $D_i$ is of one of the types $\AA_{n-1}$, $\BB_n$, $\CC_n$, $\DD_n$, $\GG_2$ (by Proposition \ref{prop:CK,LPR}).
Note that $\pi_i(L)\neq L\cap P_i$ (otherwise we are in the split case).
\sms

Now assume that for some $i\in I$, the Dynkin diagram $D_i$ is of type $\GG_2$ or $\CC_n$ for some $n\ge 3$,
or  $D_i$ is of type $\AA_2$ and $L\cap P_i\neq Q_i$.
Then $L\cap P_i$ is a quasi-permutation $W_i$-lattice
(by Lemma \ref{lem:2.8}), hence $L\cap P_i=P_i$ (by  Proposition \ref{prop:CK,LPR}).
Since $P_i\supset \pi_i(L)\supset L\cap P_i$,
we obtain that $\pi_i(L)=L\cap P_i$, which is impossible.
Thus no $D_i$ can be of  type $\GG_2$ or $\CC_n$, $n\ge 3$,
and if $D_i$ is of type $\AA_2$ for some $i$, then $L\cap P_i=Q_i$.
\sms

Thus all $D_i$ are of types $\AA_{n-1},\ \BB_n$ or $\DD_n$,
and if $D_i$ is of type $\AA_2$ for some $i\in I$, then $L\cap P_i=Q_i$.
Since $L$ is not as in the $\AA_{n-1}$-case,
we may assume that one of the $D_i$, say $D_1$,
is of type $\BB_n$  for some $n\ge 1$ ($\BB_1=\AA_1$ is permitted),
or of type $\DD_n$ for some $n\ge 4$, or of type $\DD_3$ with $L\cap P_1\neq Q_1$.
Indeed, otherwise {\em all} $D_i$ are of type $\AA_{n_i-1}$ for $n_i\ge 3$,
and in the cases $\AA_2$ ($n_i=3$) and $\AA_3$ ($n_i=4$) we have $L\cap P_i=Q_i$,
i.e., we are in the $\AA_{n-1}$-case, which contradicts our assumptions.

\sms

{\em Step 2.}
In this step, using the Dynkin diagram $D_1$ of type  $\BB_n$ or $\DD_n$ from the previous step,
we construct a quasi-permutation sublattice $L'\subset L$ of index 2
such that $L'$ is as  in \eqref{eq:*}.
First we consider the cases $\BB_n$ and $\DD_n$ separately.
\sms

Assume that  $D_1$ is of type $\BB_n$  for some $n\ge 1$ ($\BB_1=\AA_1$ is permitted).
We have $[P_1:Q_1]=2$.
Since  $P_1\supset\pi_1(L)\supsetneq L\cap P_1\supset Q_1$,
we see that $\pi_1(L)=P_1$ and $L\cap P_1= Q_1$.
Set $M_1=Q_1$.
We have $\pi_1(L)=P_1$, $L\cap P_1=M_1$, and $[P_1:M_1]=2$.
\sms

Now assume that $D_1$ is of type $\DD_n$ for some $n\ge 4$,
or of type $\DD_3$ with $L\cap P_1\neq Q_1$.
Set $M_1=L\cap P_1$, then $M_1$ is a quasi-permutation $W_1$-lattice by Lemma  \ref{lem:2.8},
and it follows from  Proposition \ref{prop:CK,LPR}  that
$(W_1,M_1)\simeq \X(\SO_{2n})$,
where $\X(\SO_{2n})$ denotes the character lattice of $\SO_{2n}$;
see Section \ref{sec:1}, after Lemma \ref{lem:2.8}.
It follows that  $[M_1:Q_1]=2$ and $[P_1:M_1]=2$.
Since $P_1\supset\pi_1(L)\supsetneq L\cap P_1= M_1$,
we see that $\pi_1(L)=P_1$.
Again we have $\pi_1(L)=P_1$, $L\cap P_1=M_1$, and $[P_1:M_1]=2$.
\sms

Now we consider together the cases when $D_1$
is of type $\BB_n$  for some $n\ge 1$
and when $D_1$ is of type $\DD_n$ for some $n\ge 3$, where in the case $\DD_3$ we have $L\cap P_1\neq Q_1$.
Set
$$
L':=\ker[L\labelto{\pi_1} P_1\to P_1/M_1].
$$
Since $\pi_1(L)=P_1$, and $[P_1:M_1]=2$, we have $[L:L']=2$.
Clearly we have   $\pi_1(L')=M_1$.
Set
$$
L^\dagger_1:=\ker[\pi_1\colon L\to P_1]=L\cap P'_1,
$$
where $P'_1=\bigoplus_{i\neq 1}P_i$.
Since $L$ is a quasi-permutation $W$-lattice,
by Lemma \ref{lem:2.8} the lattice $L_1^\dagger$ is a quasi-permutation $W_1'$-lattice,
where $W'_1=\prod_{i\neq 1}W_i$.
By the induction hypothesis, $L_1^\dagger$ is
a direct sum of indecomposable  quasi-permutation  lattices
and simple quasi-permutation  lattices as in \eqref{eq:*}.
Since $M_1=L\cap P_1$, we have $M_1\subset L'\cap P_1$,
and $L'\cap P_1\subset L\cap P_1=M_1$, hence $L'\cap P_1=M_1=\pi_1(L')$,
and by Lemma  \ref{lem:split} we have $L'=M_1\oplus L^\dagger_1$.
Since $M_1$ is a simple quasi-permutation lattice,
we conclude that $L'$ is a direct sum of indecomposable  quasi-permutation  lattices
and simple quasi-permutation  lattices  as in \eqref{eq:*},
and $[L:L']=2$.
\sms

{\em Step 3.}
In this step we show that $L$ is as in Proposition  \ref{prop:one-vector}.
We write
\begin{equation*}\label{eq:sum-L'}
L'= \bigoplus_{l=1}^s (L'\cap P_{i_l,j_l}) \, \oplus\, \bigoplus_{i\in I'} (L'\cap P_i)  ,
\end{equation*}
where $P_{i_l,j_l}=P_{i_l}\oplus P_{j_l}$,
the Dynkin diagrams $D_{i_l}$ and $D_{j_l}$ are of type $\AA_1=\BB_1$, and
$L'\cap P_{i_l,j_l}=M_{i_l,j_l}$ as in \eqref{eq:M}.
For any $i\in I'$,
we have $[\pi_i(L):\pi_i(L')]\le 2$, because $[L:L']=2$.
Furthermore, for $i\in I'$ we have
$$
\pi_i(L')=L'\cap P_i\subset L\cap P_i\subsetneq \pi_i(L),
$$
hence
$[\pi_i(L): (L\cap P_i)]=2$
and $L'\cap P_i=L\cap P_i$.
Similarly, for any $l=1,\dots, s$, if we write $i=i_l,\ j=j_l$, then we have
$$
M_{i,j}=L'\cap P_{i,j}\subset L\cap P_{i,j}\subsetneq \pi_{i,j}(L)\subset P_{i,j},\qquad [P_{i,j}:M_{i,j}]=2,
$$
whence $\pi_{i,j}(L)=P_{i,j}$, $L\cap P_{i,j}=M_{i,j}$, and therefore $[\pi_{i,j}(L):(L\cap P_{i,j})]=\quad[P_{i,j}:M_{i,j}]=2$
and $L'\cap P_{i,j}=M_{i,j}=L\cap P_{i,j}$.
\sms

We view the Dynkin diagram $D_{i_l}\cupdot D_{j_l}$ of type $\AA_1\cupdot\AA_1$ corresponding
to the pair $\{i_l,j_l\}$ ($l=1,\dots,s$) as a Dynkin diagram of type $\DD_2$.
Thus we view $L'$  as a direct sum of indecomposable  quasi-permutation  lattices  and simple quasi-permutation  lattices
corresponding to Dynkin diagrams of type $\BB_{n}$, $\DD_{n}$ and $\AA_{n}$.
\sms

We wish to show that $L$ is as in Proposition \ref{prop:one-vector}.
We change our notation in order to make it closer to that of
Proposition \ref{prop:one-vector}.
\sms

As in Subsection \ref{subsec:one-vector}, we now write $D_i$ for Dynkin diagrams
of types $\BB_{l_i}$ and $\DD_{l_i}$ only, appearing in $L'$,
where $\BB_1=\AA_1$, $\BB_2=\CC_2$, $\DD_2=\AA_1\cupdot \AA_1$ and $\DD_3=\AA_3$ are permitted,
but for $\DD_{l_i}$ with $l_i=2,3$  we require that
$$L\cap P_i=M_i:=\X(\SO_{2l_i}).$$
We write $L'_i:=L\cap P_i=L'\cap P_i$.
We have $[\pi_i(L):L'_i]=2$, hence $[P_i:L'_i]\ge 2$.
If $D_i$ is of type $\BB_{l_i}$, then $[P_i:L'_i]=2$.
If $D_i$ is of type $\DD_{l_i}$, then $L'_i=L\cap P_i\neq Q_i$,
for $\DD_2$ and $\DD_3$ by our assumption
and for  $\DD_{l_i}$ with $l_i\ge 4$ because $L\cap P_i$ is a quasi-permutation $W_i$-lattice (see  Proposition \ref{prop:CK,LPR});
again we have $[P_i:L'_i]=2$.
We see that for all $i$ we have $[P_i:L'_i]=2$, $\pi_i(L)=P_i$,
and the lattice $L'_i=M_i$ is as in Subsection \ref{subsec:one-vector}.
We realize the root system $R(D_i)$ of type $\BB_{l_i}$ or $\DD_{l_i}$
in the standard way (cf. \cite[Planches II, IV]{Bourbaki})
in the space $V_i:=\R ^{l_i}$ with basis $(e_s)_{s\in S_i}$, then
$L'_i$ is the lattice generated by the basis vectors $(e_s)_{s\in S_i}$ of $V_i$,
and we have $P_i=\langle L'_i, \half x_i\rangle$, where
$$
x_i=\sum_{s\in S_i} e_s\in L'_i\,.
$$
In particular, when $D_i$ is of type $\DD_2$  we have $x_i=e_1^{(l)}+e_2^{(l)}$
with the notation of formula \eqref{eq:M}.
\sms

As in Subsection \ref{subsec:one-vector}, we write $\Delta_\iota$
for Dynkin diagrams of type $\AA_{n_\iota'-1}$ appearing in $L'$,
where $n_\iota'\ge 3$ and for $\AA_3=\DD_3$ we require that $L\cap P_\iota=Q_\iota$.
We write $L'_\iota:=L\cap P_\iota=L'\cap P_\iota$.
Then $L'_\iota=Q_\iota$ for all $\iota$,
for $\AA_2$ by Step 1, for $\AA_3$ by our assumption,
and for other $\AA_{n_\iota'-1}$ because $L'_\iota$ is a
quasi-permutation $W_\iota$-lattice; see Proposition \ref{prop:CK,LPR}.
We have $\pi_\iota(L)\supsetneq L\cap P_\iota= L'_\iota$
and $[\pi_\iota(L): L'_\iota]=[\pi_\iota(L): \pi_\iota(L')]\le 2$ (because $[L:L']=2$).
It follows that $[\pi_\iota(L):L'_\iota]=2$, i.e., $[\pi_\iota(L):Q_\iota]=2$.
We know that $P_\iota/Q_\iota$ is a
cyclic group of order $n'_\iota$. Since it has a subgroup
$\pi_\iota(L)/Q_\iota$ of order 2, we conclude that $n'_\iota$ is
even, $n'_\iota=2n_\iota$ (where $2n_\iota\ge 4$), and
$\pi_\iota(L)/Q_\iota$ is the unique subgroup of order 2 of the
cyclic group $P_\iota/Q_\iota$ of order $2n_\iota$.
As in Subsection \ref{subsec:one-vector},
we realize
the root system $\Delta_\iota$ of type $\AA_{2n_\iota-1}$
in the standard way (cf. \cite[Planche I]{Bourbaki})
in  the subspace $V_\iota$ of vectors with zero sum of the coordinates
in  the space $\R^{2n_\iota}$ with basis
$\ve_{\iota,1},\dots, \ve_{\iota,2n_\iota}$.
We set
$$
\xi_\iota=\ve_{\iota,1}-\ve_{\iota,2}+\ve_{\iota,3}-\ve_{\iota,4}+\dots+\ve_{\iota,2n_\iota-1}-\ve_{\iota,2n_\iota},
$$
then $\xi_\iota\in L'_\iota$ and  $\half\xi_\iota\in\pi_\iota(L)\smallsetminus L'_\iota$
(cf. \cite[Planche I, formula (VI)\,]{Bourbaki}\,),
hence $\pi_\iota(L)=\langle L'_\iota,\half\xi_\iota\rangle$.
\sms

Now we set
$$
v=\half\sum_{i\in I} x_i+\half\sum_{\iota=1}^\mu \xi_\iota\,.
$$
We claim that
$$
L=\langle L',v\rangle.
$$
\sms

\noindent
{\em Proof of the claim.}
Let $w\in L\smallsetminus L'$, then $L=\langle L',w\rangle$, because $[L:L']=2$.
Set $z_i=\half x_i-\pi_i(w)$, then $z_i\in L'_i\subset L'$, because $\half x_i,\pi_i(w)\in \pi_i(L)\smallsetminus L'_i$.
Similarly, we set $\zeta_\iota=\half\xi_\iota-\pi_\iota(w)$, then $\zeta_\iota\in L'_\iota\subset L'$.
We see that
$$
v=w+\sum_i z_i+\sum_\iota\zeta_\iota,
$$
where $\sum_i z_i+\sum_\iota\zeta_\iota\in L'$, and the claim follows.
\qed
\sms

It follows from the claim that $L$ is as in  Proposition
\ref{prop:one-vector} (we use the assumption that we are not in the
$\AA_1$-case). Now by Proposition  \ref{prop:one-vector} $L$ is not
quasi-invertible, hence not quasi-permutation, which contradicts our
assumptions. This contradiction proves Theorem \ref{thm:almost}.
\end{proof}

\begin{proof}[Proof of Theorem \ref{thm:product}]
Theorem \ref{thm:product} follows immediately from Theorem \ref{thm:almost} by virtue of Proposition \ref{prop:LPR,BKLR}.
\end{proof}

\section{Proof of Theorem \ref{thm:product-non-closed}}
\label{sec:6}

In this section we deduce  Theorem \ref{thm:product-non-closed} from Theorem  \ref{thm:product}.
\sms

Let $G$ be a stably Cayley semisimple $k$-group.
Then $\Gbar :=G\times_k \kbar$ is stably Cayley over an algebraic closure $\kbar$ of $k$.
By Theorem \ref{thm:product},
 $G_\kbar=\prod_{j\in J} G_{j,\kbar}$ for some finite index set $J$,
where each $G_{j,\kbar}$ is either a stably Cayley simple group or is
isomorphic to $\gSO_{4,\kbar}$.
(Recall that $\gSO_{4,\kbar}$ is stably Cayley and semisimple,
but is not simple.)
Here we write $G_{j,\kbar}$ for the factors in order to emphasize
that they are defined over $\kbar$.
By Remark \ref{rem:pairs} the collection of direct factors $G_{j,\kbar}$
is determined uniquely by $\Gbar$.
 The Galois group $\Gal(\kbar/k)$ acts on $G_\kbar$, hence on $J$.
Let $\Omega$ denote the set of orbits of $\Gal(\kbar/k)$ in $J$.
For $\omega\in\Omega$ set $G^\omega_\kbar=\prod_{j\in\omega} G_{j,\kbar}$\,,
then $\Gbar=\prod_{\omega\in \Omega} G^\omega_\kbar$.
Each $G^\omega_\kbar$ is $\Gal(\kbar/k)$-invariant,
hence it defines a $k$-form $G^\omega_k$ of $G^\omega_\kbar$.
We have $G=\prod_{\omega\in \Omega} G^\omega_k$.
\sms

For each $\omega\in\Omega$  choose $j=j_\omega\in\omega$.
Let $l_j/k$ denote the Galois extension in $\kbar$
corresponding to the stabilizer of $j$ in $\Gal(\kbar/k)$.
The subgroup $G_{j,\kbar}$ is $\Gal(\kbar/l_j)$-invariant,
hence it comes from  an $l_j$-form $G_{j,l_j}$.
By the definition of Weil's restriction of scalars (see e.g. \cite[Subsection 3.12]{Voskresenskii-book})
$G^\omega_k\cong R_{l_j/k} G_{j,l_j}$,
hence $G\cong\prod_{\omega\in \Omega}  R_{l_j/k} G_{j,l_j}$.
Each $G_{j,l_j}$ is either absolutely simple or an $l_j$-form of $\gSO_4$.
\sms

We complete the proof using an argument from \cite[Proof of Lemma 11.1]{BKLR}.
We  show that $G_{j,l_j}$ is a direct factor of $G_{l_j}:=G\times_k l_j$.
It is clear from the definition that $G_{j,\kbar}$ is a direct factor
of $G_\kbar$ with complement $G'_\kbar=\prod_{i\in J\smallsetminus \{j\} } G_{i,\kbar}$.
Then $G'_\kbar$ is $\Gal(\kbar/l_j)$-invariant, hence it comes from some $l_j$-group $G'_{l_j}$.
We have $G_{l_j}=G_{j,l_j}\times_{l_j} G'_{l_j}$, hence $G_{j,l_j}$ is a direct factor of $G_{l_j}$.
\sms

Recall that  $G_{j,l_j}$ is either a form of $\gSO_4$ or absolutely simple.
If it  is a form of $\gSO_4$, then  clearly it is stably Cayley over $l_j$.
It remains to show that if  $G_{j,l_j}$ is absolutely simple,
then $G_{j,l_j}$ is stably Cayley over $l_j$.
The group $G_\kbar$ is  stably Cayley over $\kbar$.
Since $G_{j,\kbar}$ is a direct factor of the stably Cayley $\kbar$-group $G_\kbar$
over the algebraically closed field $\kbar$,
by \cite[Lemma 4.7]{LPR} $G_{j,\kbar}$ is stably Cayley over $\kbar$.
Comparing \cite[Theorem 1.28]{LPR} and \cite[Theorem 1.4]{BKLR},
we see that $G_{j,l_j}$ is either stably Cayley over $l_j$ (in which case
we are done) or an outer form of $\gPGL_{2n}$ for some  $n\ge 2$.
Thus assume by way of contradiction that $G_{j,l_j}$
is an outer form of $\gPGL_{2n}$ for some  $n \ge 2$.
Then by \cite[Example 10.7]{BKLR} the character lattice of $G_{j,l_j}$ is not quasi-invertible,
and by \cite[Proposition 10.8]{BKLR} the group  $G_{j,l_j}$
cannot be a direct factor of a stably Cayley $l_j$-group.
This contradicts  the fact that $G_{j,l_j}$ is a direct factor of the stably Cayley $l_j$-group $G_{l_j}$.
We conclude that $G_{j,l_j}$ cannot be an outer form of $\gPGL_{2n}$ for any  $n \ge 2$.
Thus $G_{j,l_j}$ is stably Cayley over $l_j$, as desired.
\qed
\bigskip

\appendix
\section{Appendix: Some quasi-permutation character lattices}\label{app}

The positive assertion of Proposition \ref{prop:CK,LPR} above is well known.
It is contained in \cite[Theorem 0.1]{CK} and in \cite[Theorem 1.4]{BKLR}.
However, \cite{BKLR} refers to \cite[Theorem 0.1]{CK},
and \cite{CK} refers to a series of results on rationality
(rather than only stable rationality) of corresponding generic tori.
In this appendix for the reader's convenience
we provide a proof of the following positive result
in terms of lattices only.

\begin{proposition} \label{prop:positive}
 Let  $G$ be any form of one of the following groups
\[
\text{$\gSL_3$, $\gPGL_{n}$ ($n$ odd), $\gSO_n$ ($n\ge 3$),
$\Sympl_{2n}$, $\gG_2$}
\]
or an {\em inner} form of $\gPGL_{n}$ ($n$ even).
Then the character lattice of $G$ is quasi-permutation.
\end{proposition}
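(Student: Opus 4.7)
The plan is to prove Proposition~\ref{prop:positive} case-by-case, constructing for each group $G$ in the list an explicit $(W\cdot\im\rho)$-equivariant short exact sequence $0\to\X(G)\to P_1\to P_2\to 0$ with $P_1,P_2$ permutation lattices. By Lemma~\ref{lem:quasi-p} this is equivalent to verifying $\X(G)\sim 0$.

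For $\gSO_n$ and $\Sympl_{2n}$ the character lattice over $\kbar$ is $\Z^n$, and the full finite group acting on it (including Galois contributions) is the hyperoctahedral group $W(\BB_n)=\SS_n\ltimes\{\pm 1\}^n$: for outer forms of $\gSO_{2n}$ the diagram automorphism supplies exactly the extra sign changes bridging $W(\DD_n)$ to $W(\BB_n)$, and for $\gSO_{2n+1}$ and $\Sympl_{2n}$ the Weyl group is already $W(\BB_n)$. Here I would use the doubling embedding $\Z^n\hookrightarrow\Z^{2n}$, $e_i\mapsto e_{i,+}-e_{i,-}$, in which $W(\BB_n)$ acts on $\Z^{2n}$ by honest permutation of the $2n$ basis vectors $(i,\pm)$ via $(w,\vec s)\cdot(i,\epsilon)=(w(i),s_{w(i)}\epsilon)$. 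The cokernel collapses each pair $\{e_{i,+},e_{i,-}\}$ and becomes a permutation $\SS_n$-lattice on which the sign changes act trivially; this is the required QP resolution.

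For the remaining cases, inner and outer $\gPGL_n$, $\gSL_3$, and $\gG_2$, I would use the tautological sequence $0\to Q(\AA_{n-1})\to\Z^n\to\Z\to 0$ (or its variant $0\to\Z\to\Z^3\to P(\AA_2)\to 0$ for $\gSL_3$). For inner forms of $\gPGL_n$ this is already a QP resolution ($\Z^n$ permutation, $\Z$ trivial, both $\SS_n$-permutation). For the other cases there is an extra involution either from Galois or from the center: for $\gG_2$ it is the central $-1\in W(\GG_2)=\SS_3\times\{\pm 1\}$ acting by negation, while for outer forms of $\gPGL_n$ and $\gSL_3$ it is $-w_0$, induced by the diagram automorphism and acting as $e_i\mapsto -e_{n+1-i}$. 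This twists the quotient into $\Z_{\text{sgn}}$ and gives $\Z^n$ a signed non-permutation action. Nonetheless I would verify that (i) $\Z_{\text{sgn}}$ is QP via $0\to\Z_{\text{sgn}}\to\Z[\Z/2]\to\Z\to 0$ (with $\SS_n$ acting trivially), and (ii) $\Z^n$ with its signed action is QP via an embedding $\Z^n\hookrightarrow\Z^n\oplus\Z^n$, $\vec a\mapsto(\vec a,-I\vec a)$, where $I$ is the relevant involution ($-\id$ for $\gG_2$, reversal for outer $\gPGL_n$). The involution on $\Z^{2n}$ then acts by swapping the two copies, a genuine permutation action on a rank-$2n$ permutation lattice, and the cokernel is again a permutation lattice; so $\Z^n$ is QP. Three-term closure applied to the tautological sequence then yields QP-ness of $Q$ or $P$.

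The main obstacle is precisely this last step: the \emph{three-term lemma} asserting that in a $\Gamma$-equivariant exact sequence $0\to A\to B\to C\to 0$ of lattices, if any two of $A,B,C$ are quasi-permutation, then so is the third. This is not purely formal from Definition~\ref{def.qp} (QP is a ``subgroup of permutation with permutation quotient'' condition, not obviously closed under kernels, cokernels, or extensions), but it is a standard consequence of the Colliot-Th\'el\`ene--Sansuc calculus and can be established by an explicit pullback of the given QP resolutions, producing a concrete QP resolution of the remaining term. With this in hand, the plan assembles cleanly from the elementary building blocks above.
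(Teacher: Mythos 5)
Your treatment of the first group of cases is correct and is essentially the paper's own argument: for $\gSO_n$ (all forms, via the identification of the acting group with $W(\BB_n)$) and for $\Sympl_{2n}$ the character lattice is a sign-permutation lattice, which is quasi-permutation by exactly the doubling resolution $e_i\mapsto e_{i,+}-e_{i,-}$ you describe (the paper quotes Lorenz for this), and for inner forms of $\gPGL_n$ the sequence $0\to Q(\AA_{n-1})\to\Z^n\to\Z\to 0$ is already a resolution as in Definition \ref{def.qp}. Your auxiliary claims (i) and (ii) are also correct.

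The problem is that everything else rests on your ``three-term lemma'', and that lemma is \emph{false} --- in every direction in which you use it; it is not ``a standard consequence of the Colliot-Th\'el\`ene--Sansuc calculus''. \emph{Quotient direction:} the paper's own lattice $J_\Gamma$, defined by $0\to\Z\labelto{N}\Z[\Gamma]\to J_\Gamma\to 0$ with $\Gamma=(\Z/p\Z)^2$, has permutation (hence quasi-permutation) sub and middle terms, yet is not even quasi-invertible (Proposition \ref{prop:J-Gamma}); so in $0\to\Z\to\Z^3\to P(\AA_2)\to 0$ nothing lets you pass quasi-permutation\-ness from $\Z$ and $\Z^3$ to the quotient, which is how you propose to handle $\gSL_3$. \emph{Kernel direction} (the one you need for outer $\gPGL_n$ and $\gG_2$): note that your argument never uses that $n$ is odd --- the signed lattice $\Z^n$ and the sign lattice $\Z_{\mathrm{sgn}}$ are quasi-permutation for \emph{every} $n$ --- so if ``$B,C$ quasi-permutation $\Rightarrow A$ quasi-permutation'' were valid, applying it to $0\to Q(\AA_{2m-1})\to\Z^{2m}\to\Z_{\mathrm{sgn}}\to 0$ would prove that the character lattice of an \emph{outer} form of $\gPGL_{2m}$ is quasi-permutation. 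That is false: such lattices are not even quasi-invertible (\cite[Example 10.7]{BKLR}, used in Section \ref{sec:6}), and outer forms of $\gPGL_{2m}$ are precisely the forms excluded from Proposition \ref{prop:positive}. Your argument proves too much, so it cannot be repaired by a cleverer pullback. (Even the one direction you do not use, closure under extensions, fails: for $\Gamma=(\Z/2\Z)^2=\{1,\sigma,\tau,\sigma\tau\}$ the primitive vector $\bar 1+\bar\sigma\in J_\Gamma$ spans a $\Gamma$-invariant rank-one sublattice with torsion-free quotient, exhibiting the non-quasi-permutation $J_\Gamma$ as an extension of a rank-$2$ lattice by a rank-$1$ lattice, both quasi-permutation by Lemma \ref{lem:Voskresenskii}.) The only valid principle of this type is the one recorded in Section \ref{sec:1}: if the \emph{quotient} is an honest \emph{permutation} lattice, then sub and middle terms are equivalent; weakening ``permutation'' to ``quasi-permutation'', or moving the roles around, destroys it --- indeed, if your lemma were true there could be no non-quasi-permutation lattices built from permutation data at all, and the negative results of Sections \ref{sec:2} and \ref{sec:3} would be impossible.

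The idea you are missing, and which the paper uses for the one genuinely hard case (outer $\gPGL_n$, $n$ odd), is \emph{duality applied to a resolution by honest permutation lattices}. Instead of resolving the root lattice $Q$ on the right, the paper constructs an explicit sequence $0\to M'\to M\to P\to 0$ resolving the \emph{weight} lattice $P$ on the left, where $M$ (rank $2n+1$, basis $s_i,t_i,u$) and $M'=\ker\pi$ (rank $n+2$) are both permutation $(\SS_n\times\SS_2)$-lattices; the proof that $M'$ is permutation uses the basis $\tilde\rho_i=\rho_i+u$, $\tilde\sigma=\sigma+\tfrac{n-1}{2}u$, $\tilde\tau=\tau+\tfrac{n-1}{2}u$, and the integrality of $\tfrac{n-1}{2}$ is exactly where oddness of $n$ enters. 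Dualizing gives $0\to Q\to M^\vee\to (M')^\vee\to 0$, since $Q\cong P^\vee$ and duals of permutation lattices are permutation --- a genuine quasi-permutation resolution of the character lattice. This works because short exact sequences of lattices and permutation lattices behave well under duality, whereas quasi-permutationness does not (the augmentation ideal $I_\Gamma=J_\Gamma^\vee$ is quasi-permutation while $J_\Gamma$ is not). The cases $\gSL_3$ and $\gG_2$ are then settled in the paper by the rank-$\le 2$ lemma (Lemma \ref{lem:Voskresenskii}), whose proof reduces to the $\gSO_5$ and outer-$\gPGL_3$ lattices already treated.
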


\begin{proof}
{\em $\gSO_{2n+1}$.}
Let $L$ be the character lattice of $\gSO_{2n+1}$ (including $\gSO_3$).
Then the Dynkin diagram is $D=\BB_n$.
The Weyl group is $W=\SS_n\ltimes (\Z/2\Z)^n$.
Then $L=\Z^n$ with the standard basis $e_1,\dots,e_n$.
The group $\SS_n$ naturally permutes $e_1, \ldots, e_n$,
while $(\Z/2\Z)^n$ acts by sign changes.
Since $W$ permutes the basis up to $\pm$ sign,
the $W$-lattice $L$ is quasi-permutation,
see \cite[\S\,2.8]{Lorenz}.
\sms

{\em $\gSO_{2n}$, any form, inner or outer.}
Let $L$ be the character lattice of $\gSO_{2n}$ (including $\gSO_4$).
Then the Dynkin diagram is $D=\DD_n$, with root system $R=R(D)$.
We consider the pair $(A,L)$ where $A=\Aut(R,L)$, then $(A,L)$
is isomorphic to the character lattice of $\gSO_{2n+1}$, hence is quasi-permutation.
\sms

{\em $\gSp_{2n}$.} The character lattice of $\gSp_{2n}$ is isomorphic to
the character lattice of $\gSO_{2n+1}$, hence is quasi-permutation.
\sms

{\em $\gPGL_n$, inner form.}
The character lattice of $\gPGL_n$ is the root lattice $L=Q$ of $\AA_{n-1}$.
It is a quasi-permutation $\SS_n$-lattice,
cf. \cite[Example 2.8.1]{Lorenz}.

\sms
{\em $\gPGL_n$, outer form, $n$ odd.}
Let $P$ be the {\em weight} lattice of $\AA_{n-1}$, where $n\ge 3$ is odd.
Then $P$ is generated by elements $e_1,\dots,e_n$ subject to the relation
$$
e_1+\dots+e_n=0.
$$
The automorphism group $A=\Aut(\AA_{n-1})$ is the product of $\SS_n$ and $\SS_2$.
The group $A$ acts on $P$ as follows: $\SS_n$ permutes $e_1,\dots,e_n$,
and the nontrivial element of $\SS_2$ takes each $e_i$ to $-e_i$.
\sms

We denote by $M$ the $A$-lattice of rank $2n+1$ with basis $s_1,\dots,s_n, t_1,\dots,t_n, u$.
The group $\SS_n$ permutes $s_i$ and permutes $t_i$ $(i=1,\dots,n)$,
and the nontrivial element of $\SS_2$ permutes $s_i$ and $t_i$ for each $i$.
The group $A$ acts trivially on $u$.
Clearly $M$ is a permutation lattice.
\sms

We define an $A$-epimorphism $\pi\colon M\to P$ as follows:
$$
\pi\colon\quad s_i\mapsto e_i,\quad t_i\mapsto -e_i,\quad u\mapsto 0.
$$
Set $M'=\ker \pi$, it is an $A$-lattice of rank $n+2$. We show that it is a permutation lattice.
We write down a set of $n+3$ generators  of $M'$:
$$
\rho_i=s_i+t_i,\quad \sigma=s_1+\dots+s_n,\quad \tau=t_1+\dots+t_n,\quad u.
$$
There is a relation
$$
\rho_1+\dots+\rho_n=\sigma+\tau.
$$
We define a new set of $n+2$ generators:
$$
\rhotil_i=\rho_i+u,\quad \sigmatil=\sigma+\tfrac{n-1}{2}u,\quad \tautil=\tau+\tfrac{n-1}{2}u,
$$
where $\tfrac{n-1}{2}$ is integral because $n$ is odd.
We have
$$
\rhotil_1+\dots+\rhotil_n-\sigmatil-\tautil=u,
$$
hence this new set indeed generates $M'$, hence it is a basis.
The group $\SS_n$ permutes $\rhotil_1,\dots,\rhotil_n$, while $\SS_2$ permutes $\sigmatil$ and $\tautil$.
Thus $A$ permutes our basis, and therefore $M'$ is a permutation lattice.
We have constructed a left resolution of $P$:
$$
0\to M'\to M\to P\to 0,
$$
(with permutation lattices $M$ and $M'$),
which by duality gives a right resolution
of the {\em root} lattice $Q\cong P^\vee$ of $\AA_{n-1}$:
$$
0\to Q\to M^\vee\to (M')^\vee \to 0
$$
with permutation lattices $M^\vee$ and $(M')^\vee$.
Thus the character lattice $Q$ of $\gPGL_n$
is a quasi-permutation $A$-lattice for odd $n$.
\sms

The assertion that the character lattice of $G$ is quasi-permutation in the remaining cases $\gSL_3$ and $\gG_2$
follows from the next Lemma~\ref{lem:Voskresenskii}.

\begin{lemma}[{\cite[Lemma 2.5]{BKLR}}]\label{lem:Voskresenskii}
Let $\Gamma$ be a finite group and $L$ be any $\Gamma$-lattice
of rank $r=1$ or $2$.  Then $L$ is quasi-permutation.
\end{lemma}

This lemma, which is a version of~\cite[\S\,4.9, Examples 6 and 7]{Voskresenskii-book},
was stated in \cite{BKLR} without proof.
For the sake of completeness we supply a short proof here.
\sms

We may assume that $\Gamma$ is a maximal finite subgroup of $\gGL_r(\bbZ)$.
If $r=1$, then $\gGL_1(\bbZ) = \{ \pm 1 \}$,
and the lemma reduces to the case of the character
lattice of $\gSO_3$ treated above.
\sms

Now let $r=2$.  Up to conjugation there are two maximal finite subgroups
of $\GL_2(\bbZ)$, they are isomorphic to  the dihedral groups $D_8$ (of order 8)
and to $D_{12}$ (of order  $12$), resp.,
see e.g. ~\cite[\S\,1.10.1, Table 1.2]{Lorenz}.
The group $D_8$ is the group of symmetries of a square,
and in this case it suffices to show that
the character lattice of $\gSO_5$ is quasi-permutation,
which we have done above.  The group $D_{12}$ is the group
of symmetries of a regular hexagon, and  in this case
it suffices to show that the character lattice of $\gPGL_3$ (outer form)
is quasi-permutation, which we have done above as well.
This completes the proofs of Lemma~\ref{lem:Voskresenskii} and
Proposition~\ref{prop:positive}.
\end{proof}

\sms

\noindent{\sc Acknowledgements.}
The authors are very grateful to the anonymous referee
for prompt and thorough refereeing the paper
and for noticing a (correctable) error in Theorem 4.1 and in the proof of Theorem 5.4.
The authors thank Rony A.~Bitan for his help in proving Lemma \ref{lem:not-qp}.
The first-named author was supported in part
by the Hermann Minkowski Center for Geometry.
The second-named author was supported in part by the Israel Science
Foundation, grant 1207/12, and by the Minerva Foundation through the
Emmy Noether Institute for Mathematics.

\sms

\Addresses

\end{document}